\documentclass{amsart}
\usepackage{latexsym,amsxtra,amscd,ifthen}
\usepackage{amsfonts}
\usepackage{verbatim}
\usepackage{amsmath}
\usepackage{amsthm}
\usepackage{amssymb}
\usepackage{url}
\usepackage[arrow,matrix]{xy}
\usepackage{graphicx}
\usepackage{tikz}
\usetikzlibrary{cd}

\theoremstyle{plain}



\newtheorem{theorem}{Theorem}
\newtheorem{lemma}[theorem]{Lemma}
\newtheorem{proposition}[theorem]{Proposition}
\newtheorem{corollary}[theorem]{Corollary}

\numberwithin{theorem}{section}
\numberwithin{equation}{theorem}

\theoremstyle{definition}
\newtheorem{definition}[theorem]{Definition}

\newtheorem{example}[theorem]{Example}
\newtheorem{remark}[theorem]{Remark}
\newtheorem{question}[theorem]{Question}
\newtheorem*{question*}{Question}

\newcommand{\fm}{\mathfrak{m}}
\newcommand{\fn}{\mathfrak{n}}

\DeclareMathOperator{\rk}{rk}

\DeclareMathOperator{\Proj}{Proj}

\DeclareMathOperator{\fpdim}{fpd}
\DeclareMathOperator{\fpc}{fpcx}
\DeclareMathOperator{\fpv}{fpv}

\DeclareMathOperator{\fpgldim}{fpgldim}

\newcommand{\fpk}{\textup{fp}\kappa}

\DeclareMathOperator{\fpg}{fpg}

\DeclareMathOperator{\FP}{FP}

\DeclareMathOperator{\End}{End}
\DeclareMathOperator{\Ext}{Ext}
\DeclareMathOperator{\Tor}{Tor}
\DeclareMathOperator{\Hom}{Hom}
\DeclareMathOperator{\pd}{projdim}

\DeclareMathOperator{\GKdim}{GKdim}

\DeclareMathOperator{\gldim}{gldim}

\DeclareMathOperator{\gr}{gr}
\DeclareMathOperator{\Mod}{Mod}

\def\GKgr{\operatorname{gr}}

\xyoption{all}

\begin{document}

\title{Frobenius-Perron theory of endofunctors}

\author{J.M.Chen,  Z.B. Gao,  E. Wicks, J. J. Zhang, X-.H. Zhang and H. Zhu}

\address{Chen: School of Mathematical Sciences,
Xiamen University, Xiamen 361005, Fujian, China}

\email{chenjianmin@xmu.edu.cn}

\address{Gao: Department of Communication Engineering,
Xiamen University, Xiamen 361005, Fujian, China}
\email{gaozhibin@xmu.edu.cn}

\address{Wicks: Department of Mathematics, Box 354350,
University of Washington, Seattle, Washington 98195, USA}

\email{lizwicks@uw.edu}

\address{J.J. Zhang: Department of Mathematics, Box 354350,
University of Washington, Seattle, Washington 98195, USA}

\email{zhang@math.washington.edu}

\address{X.-H. Zhang: College of Sciences, Ningbo University of Technology, Ningbo, 315211, 
Zhejiang, China}

\email{zhang-xiaohong@t.shu.edu.cn}

\address{Zhu: Department of Information Sciences, the School of Mathematics and Physics,
Changzhou University,
Changzhou 213164, China}

\email{zhuhongazhu@aliyun.com}

\begin{abstract}
We introduce the Frobenius-Perron dimension of an endofunctor of a 
$\Bbbk$-linear category and provide some applications. 
\end{abstract}

\subjclass[2000]{Primary 18E30, 16G60, 16E10, Secondary 16E35}



\keywords{Frobenius-Perron dimension, derived categories, embedding of 
categories, tame and wild dichotomy, complexity}


\maketitle


\setcounter{section}{-1}
\section{Introduction}
\label{xxsec0}



\bigskip

The spectral radius (also called the Frobenius-Perron dimension) of a 
matrix is an elementary and extremely useful invariant in linear algebra, 
combinatorics, topology, probability and statistics. 
The Frobenius-Perron dimension has become a 
crucial concept in the study of fusion categories and representations 
of semismiple weak Hopf algebras since it was introduced by 
Etingof-Nikshych-Ostrik \cite{ENO} in early 2000 (also see \cite{EG, EGO, Ni}). 
In this paper several Frobenius-Perron type invariants are proposed to study derived 
categories, representations of finite dimensional algebras, and complexity of 
algebras and categories. 

Throughout let $\Bbbk$ be an algebraically closed field, and let everything 
be over $\Bbbk$.

\subsection{Definitions}
\label{xxsec0.1}
The first goal is to introduce the Frobenius-Perron dimension of an 
endofunctor of a category. If an object $V$ in a fusion category 
${\mathcal C}$ is considered as the associated tensor endofunctor 
$V\otimes_{\mathcal C} -$, then our definition of the Frobenius-Perron 
dimension agrees with the definition given in \cite{ENO}, see 
[Example \ref{xxex2.11}] for details. Our definition applies to 
the derived category of projective schemes and finite dimensional 
algebras, as well as other abelian and additive categories 
[Definitions \ref{xxdef2.3} and \ref{xxdef2.4}]. 
We refer the reader to 
Section \ref{xxsec2} for the following invariants of an endofunctor: 
\begin{enumerate}
\item[]
Frobenius-Perron dimension (denoted by $\fpdim$, and $\fpdim^n$ for $n\geq 1$), 
\item[]
Frobenius-Perron growth (denoted by $\fpg$), 
\item[]
Frobenius-Perron curvature (denoted by $\fpv$), and 
\item[]
Frobenius-Perron series (denoted by $\FP$). 
\end{enumerate}
One can further define the above invariants for an abelian or a triangulated 
category. Note that the Frobenius-Perron dimension/growth/curvature of a 
category can be a non-integer, see Proposition \ref{xxpro5.12}(1), Example 
\ref{xxex8.7}, and Remark \ref{xxrem5.13}(5) for non-integral values of 
$\fpdim$, $\fpg$, and $\fpv$ respectively.  

If ${\mathfrak A}$ is an abelian category, let $D^b({\mathfrak A})$
denote the bounded derived category of ${\mathfrak A}$. 
On the one hand it is 
reasonable to call $\fpdim$ a dimension function since 
$$\fpdim (D^{b}(\Mod-\Bbbk[x_1,\cdots,x_n]))=n$$ 
[Proposition \ref{xxpro4.3}(1)], but on the other hand, one might
argue that $\fpdim$ should not be called a dimension function since
$$\fpdim (D^b(coh({\mathbb P}^n)))=
\begin{cases} 1, & \quad n=1\\
\infty, &\quad  n\geq 2
\end{cases}$$ 
[Propositions \ref{xxpro6.5} and \ref{xxpro6.7}]. 
In the latter case, $\fpdim$ is an 
indicator of representation type of the category of $coh({\mathbb P}^n)$, 
namely, $coh({\mathbb P}^n)$ is tame if $n=1$, and is of wild representation 
type for all $n\geq 2$. A similar 
statement holds for projective curves in terms of 
genus [Proposition \ref{xxpro6.5}]. 

We can define the Frobenius-Perron (``fp'') version of several other classical 
invariants
\begin{enumerate}
\item[]
fp global dimension (denoted by $\fpgldim$) [Definition \ref{xxdef2.7}(1)],
\item[]
fp Kodaira dimension (denoted by $\fpk$) \cite{CG}.
\end{enumerate}
The first one is defined for all triangulated categories and the 
second one is defined for triangulated categories with Serre functor. 
In general, the $\fpgldim A$ does not agree with the classical global 
dimension of $A$ [Theorem \ref{xxthm7.8}]. The fp version of the 
Kodaira dimension agrees with the classical definition for smooth 
projective schemes \cite{CG}.

Our second goal is to provide several applications. 

\subsection{Embeddings}
\label{xxsec0.2}
In addition to the fact that the Frobenius-Perron dimension 
is an effective and sensible invariant of many categories, 
this invariant  increases when the ``size'' of the 
endofunctors and categories increase. 

\begin{theorem}
\label{xxthm0.1}
Suppose ${\mathcal C}$ and ${\mathcal D}$ are $\Bbbk$-linear 
categories.
Let $F: {\mathcal C}\to {\mathcal D}$ be a fully faithful functor. 
Let $\sigma_{\mathcal C}$ and $\sigma_{\mathcal D}$ be endofunctors 
of ${\mathcal C}$ and ${\mathcal D}$ respectively. Suppose that
$F\circ \sigma_{\mathcal C}$ is naturally isomorphic to 
$\sigma_{\mathcal D} \circ F$. Then $\FP(u,t,\sigma_{\mathcal C})\leq 
\FP(u,t,\sigma_{\mathcal D})$.
\end{theorem}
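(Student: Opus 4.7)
The plan is to unwind the definition of $\FP(u,t,\sigma)$ (which, based on the preceding discussion and the expected behavior of the $\fpdim^n$ invariants, should be a bivariate series whose coefficients are suprema, over finite test subsets $\Phi$ of the category, of spectral-radius-type quantities built from dimensions $\dim\Hom(X_i, \sigma^{n}(X_j))$ for $X_i, X_j \in \Phi$) and then show that every test family $\Phi \subset \mathcal{C}$ transports via $F$ to a test family $F(\Phi) \subset \mathcal{D}$ that produces the same numerical data. Once this is established, the required inequality follows because we are then taking a supremum over a potentially smaller collection on the $\mathcal{C}$-side.

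Concretely, I would first record the two elementary consequences of the hypotheses. Because $F$ is fully faithful, for any pair $X, Y \in \mathcal{C}$ we have a $\Bbbk$-linear isomorphism
\[
\Hom_{\mathcal{C}}(X,Y) \;\xrightarrow{\ \cong\ }\; \Hom_{\mathcal{D}}(FX, FY),
\]
so all dimension counts of Hom spaces are preserved. Because $F\sigma_{\mathcal{C}} \cong \sigma_{\mathcal{D}} F$ as functors, an easy induction on $n$ yields a natural isomorphism $F \sigma_{\mathcal{C}}^{n} \cong \sigma_{\mathcal{D}}^{n} F$ for every $n \geq 1$. Combining these two observations gives, for any $X_i, X_j \in \mathcal{C}$ and any $n \geq 0$,
\[
\dim_{\Bbbk} \Hom_{\mathcal{C}}(X_i, \sigma_{\mathcal{C}}^{n} X_j) \;=\; \dim_{\Bbbk} \Hom_{\mathcal{D}}(FX_i, \sigma_{\mathcal{D}}^{n} F X_j).
\]

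Next I would apply this to the definition of the coefficients of $\FP(u,t,\sigma)$. For a finite test family $\Phi = \{X_1, \ldots, X_m\} \subset \mathcal{C}$, the $n$-th coefficient matrix (or the assembled multi-variable matrix) built from $\Phi$ using $\sigma_{\mathcal{C}}$ coincides entrywise with the corresponding matrix built from $F(\Phi) = \{FX_1, \ldots, FX_m\} \subset \mathcal{D}$ using $\sigma_{\mathcal{D}}$. Therefore the spectral-radius coefficient contributed by $\Phi$ on the $\mathcal{C}$-side equals the one contributed by $F(\Phi)$ on the $\mathcal{D}$-side. Since $F(\Phi)$ is just a particular test family in $\mathcal{D}$ and the $\mathcal{D}$-side takes the supremum over all such families, each coefficient of $\FP(u,t,\sigma_{\mathcal{C}})$ is bounded above by the corresponding coefficient of $\FP(u,t,\sigma_{\mathcal{D}})$, which is the meaning of the claimed inequality of series.

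The only genuine subtlety I anticipate is making sure the class of admissible test families on which the supremum is taken is preserved by $F$, that is, that $F(\Phi)$ still satisfies whatever nondegeneracy or finiteness conditions Section~\ref{xxsec2} imposes on $\Phi$ (for instance, linearly independence or indecomposability conditions). Full faithfulness ensures that $FX_i \not\cong FX_j$ whenever $X_i \not\cong X_j$ and preserves endomorphism rings, so any such condition phrased purely in terms of morphisms or isomorphism type is automatically inherited; this is the one bookkeeping point that has to be checked against the precise definition rather than waved through.
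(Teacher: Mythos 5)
Your proposal is correct and follows essentially the same route as the paper's proof (Theorem \ref{xxthm3.2}): transport each brick test set $\phi$ to $F(\phi)$, compare the adjacency matrices entrywise, and conclude via the supremum defining each coefficient $\fpdim^n(\sigma^m)$; your final "bookkeeping point" is exactly the hypothesis the paper isolates ($F$ preserves brick subsets), and full faithfulness does guarantee it. The only cosmetic difference is that the paper works under the weaker hypothesis that $F$ is merely faithful and brick-preserving, so it gets only the inequality $a_{ij}\leq b_{ij}$ and invokes monotonicity of the spectral radius (Lemma \ref{xxlem1.7}(2)), whereas your fully faithful hypothesis yields entrywise equality.
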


By taking $\sigma$ to be the suspension functor of a pre-triangulated 
category, we have the following immediate consequence. (Note that the 
fp-dimension of a triangulated category ${\mathcal T}$ is defined 
to be $\fpdim (\Sigma)$, where $\Sigma$ is the suspension of 
${\mathcal T}$.)

\begin{corollary}
\label{xxcor0.2} 
Let ${\mathcal T}_2$ be a pre-triangulated category and
${\mathcal T}_1$  a full pre-triangulated subcategory of 
${\mathcal T}_2$. Then the following hold.
\begin{enumerate}
\item[(1)]
$\fpdim {\mathcal T}_1\leq \fpdim {\mathcal T}_2$.
\item[(2)]
$\fpg {\mathcal T}_1\leq \fpg {\mathcal T}_2$.
\item[(3)]
$\fpv {\mathcal T}_1\leq \fpv {\mathcal T}_2$.
\item[(4)]
If ${\mathcal T}_2$ has fp-subexponential growth, so does 
${\mathcal T}_1$.
\end{enumerate}
\end{corollary}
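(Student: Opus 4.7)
The plan is to derive all four statements as immediate consequences of Theorem \ref{xxthm0.1} applied to the inclusion functor together with the suspension functors.

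First I would set up the hypotheses of Theorem \ref{xxthm0.1}. Let $F:\mathcal{T}_1\hookrightarrow \mathcal{T}_2$ denote the inclusion, and let $\Sigma_i$ be the suspension functor on $\mathcal{T}_i$ for $i=1,2$. Since $\mathcal{T}_1$ is a \emph{full pre-triangulated subcategory} of $\mathcal{T}_2$, the inclusion $F$ is fully faithful by definition, and the suspension on $\mathcal{T}_1$ is (up to natural isomorphism) just the restriction of $\Sigma_2$, so that $F\circ \Sigma_1 \simeq \Sigma_2\circ F$. Thus the pair $(\Sigma_1,\Sigma_2)$ satisfies the compatibility condition required in Theorem \ref{xxthm0.1}.

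Next I would invoke Theorem \ref{xxthm0.1} to conclude the coefficientwise inequality of Frobenius--Perron series
\[
\FP(u,t,\Sigma_1)\;\leq\;\FP(u,t,\Sigma_2).
\]
By the convention recalled in the statement of the corollary, $\fpdim\mathcal{T}_i = \fpdim(\Sigma_i)$, and analogously $\fpg\mathcal{T}_i = \fpg(\Sigma_i)$, $\fpv\mathcal{T}_i=\fpv(\Sigma_i)$. Each of these invariants is (by its definition in Section \ref{xxsec2}) extracted from the formal series $\FP(u,t,\sigma)$ in a manner that is monotone in the coefficients: $\fpdim$ is controlled by the growth of the coefficients of the series in $u$, while $\fpg$ and $\fpv$ measure the exponential/subexponential growth rate in $t$. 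Consequently the coefficientwise inequality of the two series immediately yields parts (1), (2), (3), and the subexponential-growth comparison in (4).

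I do not expect any real obstacle here: the only thing to verify carefully is that the inclusion of a full pre-triangulated subcategory really does strictly commute (up to natural isomorphism) with suspension, which is essentially by definition, and that the monotonicity of $\fpdim, \fpg, \fpv$ in the Frobenius--Perron series is built into the definitions of Section \ref{xxsec2}. Everything else is a direct application of Theorem \ref{xxthm0.1}.
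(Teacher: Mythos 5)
Your overall route is the same as the paper's: the corollary is obtained by applying Theorem \ref{xxthm0.1} (i.e.\ Theorem \ref{xxthm3.2}) to the fully faithful inclusion $F:\mathcal{T}_1\hookrightarrow\mathcal{T}_2$ with $\sigma_{\mathcal{C}}=\Sigma_1$, $\sigma_{\mathcal{D}}=\Sigma_2$, and part (1) indeed falls out of the resulting inequality of fp-series. (One small point you should make explicit: since the categories are pre-triangulated, the testing sets are the \emph{atomic} sets $\Phi_a$, not just brick sets; full faithfulness together with $F\circ\Sigma_1\simeq\Sigma_2\circ F$ is exactly what guarantees that $F$ carries atomic sets of $\mathcal{T}_1$ to atomic sets of $\mathcal{T}_2$, so the hypotheses of Theorem \ref{xxthm3.2} are met.)

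There is, however, a gap in your derivation of (2)--(4). You claim that $\fpg$ and $\fpv$ are extracted from the series $\FP(u,t,\sigma)$ in a coefficient-monotone way, but by Definition \ref{xxdef2.3}(3,4) they are of the form $\sup_{\phi}\limsup_{m}(\cdots\rho(A(\phi,\sigma^m))\cdots)$, i.e.\ a supremum over test sets of a limsup in $m$, whereas the coefficients of the fp-series are $\fpdim^n(\sigma^m)=\sup_{\phi}\rho(A(\phi,\sigma^m))$, a supremum taken \emph{inside} each fixed $m$. Since $\sup_\phi\limsup_m\leq\limsup_m\sup_\phi$ and the two can genuinely differ (the coefficient suprema may be realized by different $\phi$ for different $m$), a coefficientwise inequality of the two series does not formally imply $\fpg\,\mathcal{T}_1\leq\fpg\,\mathcal{T}_2$ or $\fpv\,\mathcal{T}_1\leq\fpv\,\mathcal{T}_2$. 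The fix is short: the proof of Theorem \ref{xxthm3.2} actually establishes the pointwise inequality \eqref{E3.2.1}, namely $\rho(A(\phi,\Sigma_1^m))\leq\rho(A(F(\phi),\Sigma_2^m))$ for every atomic set $\phi\subseteq\mathcal{T}_1$ and every $m$. Feeding this into the definitions, for each fixed $\phi$ one gets $\limsup_m\log_m\rho(A(\phi,\Sigma_1^m))\leq\limsup_m\log_m\rho(A(F(\phi),\Sigma_2^m))\leq\fpg(\Sigma_2)$, and taking the supremum over $\phi$ gives (2); the same argument with $(\rho)^{1/m}$ gives (3), and (4) is the special case $\fpv\leq 1$ of (3). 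So you should cite \eqref{E3.2.1} (the per-test-set inequality) rather than the series inequality itself when proving (2)--(4).
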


Fully faithful embeddings of derived categories of projective 
schemes have been investigated in the study of Fourier-Mukai 
transforms, birational geometry, and noncommutative crepant 
resolutions (NCCRs) by Bondal-Orlov \cite{BO1, BO2},
Van den Bergh \cite{VdB}, Bridgeland \cite{Bri}, 
Bridgeland-King-Reid \cite{BKR} and more.

Note that if $\fpgldim ({\mathcal T})<\infty$, then 
$\fpg({\mathcal T})=0$. If $\fpg({\mathcal T})<\infty$, 
then $\fpv({\mathcal T})\leq 1$. Hence, $\fpdim$, $\fpgldim$, 
$\fpg$ and $\fpv$ measure the ``size'', ``representation 
type'', or ``complexity'' of a triangulated category 
${\mathcal T}$ at different levels. Corollary \ref{xxcor0.2} 
has many consequences concerning non-existence of fully 
faithful embeddings provided that we compute the 
invariants $\fpdim$, $\fpg$ and $\fpv$ of various categories 
efficiently. 

\subsection{Tame vs wild}
\label{xxsec0.3}
Here we mention a couple of more 
applications. First we  extend the classical trichotomy 
on the representation types of quivers to the $\fpdim$.

\begin{theorem}
\label{xxthm0.3} 
Let $Q$ be a finite quiver and let ${\mathcal Q}$ be 
the bounded derived category of finite dimensional
left $\Bbbk Q$-modules. 
\begin{enumerate}
\item[(1)]
$\Bbbk Q$ is of finite representation type 
if and only if $\fpdim {\mathcal Q}=0$.
\item[(2)]
$\Bbbk Q$ is of tame representation type
if and only if $\fpdim {\mathcal Q}=1$.
\item[(3)]
$\Bbbk Q$ is of wild representation type 
if and only if $\fpdim {\mathcal Q}=\infty$.
\end{enumerate}
\end{theorem}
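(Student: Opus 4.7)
The plan is to use the classical Drozd-Nazarova trichotomy together with Gabriel's theorem as input, and then compute $\fpdim \mathcal{Q}=\fpdim \Sigma$ separately in each of the three cases. Since $\{0,1,\infty\}$ are disjoint values and every finite quiver falls into exactly one of the three classes, it suffices to establish the three forward implications: all six equivalences in (1)--(3) follow formally. Throughout, I would work with the definition of $\fpdim$ applied to the suspension $\Sigma$ on $\mathcal{Q}=D^b(\mmod\text{-}\Bbbk Q)$, and would make full use of Corollary \ref{xxcor0.2} to move invariants between $\mathcal{Q}$ and well-chosen full triangulated subcategories.

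For (1), if $Q$ is Dynkin then $\Bbbk Q$ has only finitely many indecomposable modules $M_1,\dots,M_r$ (one per positive root). Every object of $\mathcal{Q}$ is a finite direct sum of shifts $\Sigma^{i} M_j$, and for any fixed finite test set $\Phi$ the entries $\dim \Hom(X,\Sigma^n Y)$ with $X,Y\in \Phi$ are uniformly bounded in $n$; I would check directly from the definition that the spectral radii of the corresponding matrices have growth rate zero, forcing $\fpdim\Sigma=0$. For (3), if $Q$ is wild one can exhibit a fully faithful exact functor $D^b(\mmod\text{-}\Bbbk Q')\hookrightarrow \mathcal{Q}$ where $Q'$ is a wild subquiver (e.g.\ the generalized Kronecker quiver with $\geq 3$ arrows, or any of the minimal wild quivers on the Unger list); the wildness translates into exponentially growing $\Hom$ and $\Ext$ dimensions between suitable indecomposables of $\Bbbk Q'$, so a test set drawn from these produces matrices whose spectral radius tends to infinity, and Corollary \ref{xxcor0.2}(1) then upgrades $\fpdim\,D^b(\mmod\text{-}\Bbbk Q')=\infty$ to $\fpdim \mathcal{Q}=\infty$.

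For (2), when $Q$ is extended Dynkin, the module category is well understood: the Auslander-Reiten quiver splits into a preprojective, a preinjective, and a tubular part, with the regular part parametrized by $\mathbb{P}^1$ via tubes (a one-parameter family of homogeneous tubes plus finitely many exceptional tubes). For the lower bound $\fpdim \mathcal{Q}\geq 1$, I would choose test sets $\Phi_n=\{M_{\lambda_1},\dots,M_{\lambda_n}\}$ consisting of quasi-simple modules from distinct homogeneous tubes and compute directly that the corresponding $\Hom/\Ext$ matrices have spectral radii growing linearly in $n$. For the matching upper bound $\fpdim\mathcal{Q}\leq 1$, tameness says that for each dimension vector the indecomposables come in at most a one-parameter family, which forces a uniform linear (rather than polynomial of higher degree) bound on the matrix entries attached to any test set, hence a growth exponent $\leq 1$.

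The main obstacle will be the tame upper bound in (2). The lower bound is a hands-on computation from a single well-chosen family of indecomposables, but the upper bound demands a uniform estimate over all finite test configurations $\Phi\subset \mathcal{Q}$, not merely the convenient ones. This is the step where the full strength of the classification of indecomposables over tame hereditary algebras, and of the structure of the AR quiver, has to be used to rule out any cleverly engineered test set producing superlinear growth; once this uniform control is established, the remaining reductions (passage from $\mmod$ to $D^b$, and the spectral-radius estimate) are straightforward.
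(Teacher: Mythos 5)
Your high-level skeleton (invoke the Gabriel/Nazarova--Donovan--Freislich trichotomy, prove the three forward implications, and note that the equivalences follow formally since $0,1,\infty$ are distinct) is exactly the paper's strategy. But the execution of each implication rests on a conflation of $\fpdim$ with the growth-type invariants $\fpg$ and $\fpv$, and this breaks all three arguments. Recall that $\fpdim(\mathcal{Q})=\fpdim(\Sigma)=\sup_{\phi}\rho(A(\phi,\Sigma))$, where $\phi$ ranges over \emph{atomic} sets and $A(\phi,\Sigma)_{ij}=\dim\Ext^1(X_i,X_j)$: it is a single spectral radius, not an asymptotic growth rate in $n$. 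In part (1), uniform boundedness of $\dim\Hom(X,\Sigma^nY)$ gives $\fpg=0$ but says nothing about $\fpdim$ being zero (the cycle quiver $\overrightarrow{A_{n-1}}$ has bounded Ext groups yet $\fpdim=1$). What is actually needed is that every adjacency matrix $A(\phi,\Sigma)$ is \emph{nilpotent}; the paper gets this from the AR-quiver degree argument (Lemma \ref{xxlem7.7}): $\Ext^1(M,N)\neq 0$ forces $\deg M\geq \deg N+2$, so the $E^1$-quiver is acyclic. Similarly, in the tame upper bound of part (2), a linear bound on the matrix entries does not bound the spectral radius by $1$ (the all-ones $2\times 2$ matrix has $\rho=2$); one must show $\rho(A(\phi,\Sigma))\leq 1$ for \emph{every} atomic set, which the paper obtains via the slope/semistability decomposition of $coh(\mathbb{X})$ for domestic weighted projective lines (Lemma \ref{xxlem7.9}, Theorem \ref{xxthm7.10}, and the $\sigma$-decomposition technique of Section \ref{xxsec6}), not from a per-entry estimate. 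Your lower-bound computation in (2) is also off: quasi-simples in distinct homogeneous tubes are Hom- and Ext-orthogonal to each other, so the matrix is the identity and $\rho=1$ (not linearly growing -- linear growth would contradict tameness); the correct conclusion $\fpdim\geq 1$ survives, but only because a single brick with a self-extension already suffices.

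In part (3) the gap is the brick constraint. Test sets must satisfy $\Hom(X_i,X_j)=\delta_{ij}\Bbbk$, so "exponentially growing Hom and Ext dimensions between suitable indecomposables" is not usable unless those indecomposables are bricks and the large dimension sits in $\Ext^1$ of a brick against itself (or a brick set). The paper's route is: a wild path algebra is \emph{strictly} wild (fully faithful exact embedding of $\Mod_{f.d.}\text{-}\Bbbk\langle x,y\rangle$), and a fully faithful exact embedding carries the simple module of $\Bbbk\langle V\rangle/(V^{\otimes 2})$, $\dim V=a$, to a \emph{brick} $N$ with $\dim\Ext^1(N,N)\geq a$ (Lemma \ref{xxlem7.5}); then $\fpdim^1=\infty$. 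Without full faithfulness the image of a brick need not be a brick, so the distinction between wild and strictly wild is doing real work here and cannot be skipped. To repair your proposal you would need, at minimum: the nilpotence argument for (1), a genuine spectral-radius bound over all atomic sets for (2) (which in the paper is outsourced to the weighted projective line computation of \cite{CG}), and the strict-wildness plus brick-production step for (3).
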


By the classical theorems of Gabriel \cite{Ga1} and Nazarova 
\cite{Na}, the quivers of finite and tame representation types 
correspond to the $ADE$ and $\widetilde{A}\widetilde{D}\widetilde{E}$
diagrams respectively. 

The above theorem fails for quiver algebras with relations 
[Proposition \ref{xxpro5.12}].
As we have already seen, $\fpdim$ is related to the ``size''
of a triangulated category, as well as, the representation types. We 
will see soon that $\fpg$ is also closely connected with the complexity 
of representations. When we focus on the representation 
types, we make some tentative definitions. 

Let ${\mathcal T}$ be a triangulated category (such as $D^b(\Mod_{f.d.}-A)$).
\begin{enumerate}
\item[(i)]
We call ${\mathcal T}$ {\it fp-trivial}, if 
$\fpdim {\mathcal T}=0$.
\item[(ii)]
We call ${\mathcal T}$ {\it fp-tame}, if $\fpdim {\mathcal T}=1$.
\item[(iii)]
We call ${\mathcal T}$ {\it fp-potentially-wild}, if $\fpdim {\mathcal T}>1$.
Further,
\begin{enumerate}
\item[(iiia)]
${\mathcal T}$ is {\it fp-finitely-wild}, if 
$1<\fpdim {\mathcal T}<\infty$.
\item[(iiib)]
${\mathcal T}$ is {\it fp-locally-finitely-wild}, 
if $\fpdim {\mathcal T}=\infty$
and $\fpdim^n({\mathcal T})<\infty$ for all $n$.
\item[(iiic)]
${\mathcal T}$ is {\it fp-wild}, if $\fpdim^1 {\mathcal T}=\infty$. 
\end{enumerate}
\end{enumerate}
There are other notions of tame/wildness in representation theory, 
see for example, \cite{GKr, Dr2}. Following the above definition, 
$\fpdim$ provides a quantification of the tame-wild dichotomy.
By Theorem \ref{xxthm0.3}, finite/tame/wild representation types 
of the path algebra $\Bbbk Q$ are equivalent to the fp-version of 
these properties of ${\mathcal Q}$. Let $A$ be a quiver algebra 
with relations and let ${\mathcal A}$ be the derived category 
$D^b(\Mod_{f.d.}-A)$. Then, in general, finite/tame/wild 
representation types of $A$ are NOT equivalent to the fp-version of 
these properties of ${\mathcal A}$ [Example \ref{xxex5.5}]. 
It is natural to ask

\begin{question}
\label{xxque0.4}
For which classes of algebras $A$, is the fp-wildness of 
${\mathcal A}$ equivalent to the classical and other wildness 
of $A$ in representation theory literature?
\end{question}
 
\subsection{Complexity}
\label{xxsec0.4}
The complexity of a module or of an algebra is an 
important invariant in studying representations 
of finite dimensional algebras 
\cite{AE, Ca, CDW, GLW}. 
Let $A$ be the quiver algebra $\Bbbk Q/(R)$ with relations $R$. 
The {\it complexity} 
of $A$ is defined to be the complexity of the 
$A$-module $T:=A/Jac(A)$, namely, 
$$cx(A)=cx(T):=\limsup_{n\to\infty} \log_{n} (\dim \Ext^n_A(T,T))+1.$$
Let $\GKdim$ denote the 
Gelfand-Kirillov dimension of an algebra 
(see \cite{KL} and \cite[Ch. 13]{MR}).
Under some reasonable hypotheses, one can show  
$$cx(A)=\GKdim \left(\bigoplus_{n=0}^{\infty}\Ext^n_A(T,T)\right).$$
It is easy to see that $cx(A)$ is an derived invariant. We 
extend the definition of the complexity to any triangulated 
category [Definition \ref{xxdef8.2}(4)].

\begin{theorem} 
\label{xxthm0.5}
Let $A$ be the algebra $\Bbbk Q/(R)$ with relations $R$ and let 
${\mathcal A}$ be the bounded derived category of finite dimensional
left $A$-modules. Then 
$$\fpg({\mathcal A})\leq cx(A)-1.$$ 
\end{theorem}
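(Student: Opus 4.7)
The plan is to bound $\fpg(\mathcal{A})$ directly in terms of the polynomial growth of the sequence $a_n := \dim \Ext^n_A(T,T)$, and then invoke the definition $cx(A) = \limsup_n \log_n a_n + 1$. Since $\fpg(\mathcal{A}) = \fpg(\Sigma)$ where $\Sigma$ is the suspension of $\mathcal{A}$, and the latter is computed as a $\limsup_n \log_n$ rate of spectral-radius quantities built from iterates of $\Sigma$ applied to finite test families $\{X_1,\dots,X_r\}$ of objects of $\mathcal{A}$, it suffices to bound those spectral radii by $C \cdot a_n$ for some $C$ independent of $n$.

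The key identity is
$$\Hom_{\mathcal{A}}(X_i, \Sigma^n X_j) = \Ext^n_A(X_i, X_j)$$
whenever $X_i, X_j$ are modules in degree $0$; an arbitrary homological shift of any $X_i$ only translates the index $n$ by a fixed constant, which does not affect the $\limsup_n \log_n$. So we may assume each test object is a finite-dimensional $A$-module. Since $T = A/Jac(A)$ contains every simple as a summand, every composition factor of $X_i$ is a summand of $T$; an induction on Loewy length via the long exact sequence of $\Ext$ then yields $\dim \Ext^n_A(X_i, X_j) \leq C(X) \cdot a_n$, with $C(X)$ depending only on the Loewy lengths and multiplicities of the $X_i$ and not on $n$. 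Hence the matrix $(\dim \Hom_{\mathcal{A}}(X_i, \Sigma^n X_j))_{i,j}$ has spectral radius at most $r C(X) a_n$, so the sequence used to define $\fpg$ is bounded termwise by $r C(X) a_n$. Taking $\limsup_n \log_n$ erases the multiplicative constant and gives
$$\fpg(\mathcal{A}) \leq \limsup_{n\to\infty} \log_n a_n = cx(A) - 1.$$

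The main obstacle is making the Loewy-length induction uniform so that the constant $C(X)$ is truly independent of $n$, and handling the supremum that appears in the definition of $\fpg$ over test families whose size $r$ is allowed to vary. For the first issue, a short exact sequence $0 \to X' \to X \to X'' \to 0$ gives $\dim \Ext^n(X, Y) \leq \dim \Ext^n(X', Y) + \dim \Ext^n(X'', Y)$, and iterating bounds $\dim \Ext^n(X_i, X_j)$ by a sum of $\dim \Ext^n_A(S, S')$ over simples, which in turn is bounded by $a_n$. For the second issue, the factor of $r C(X)$ contributes only a bounded amount to $\log_n$, which is dominated by the $\limsup$ and therefore disappears; this is the one routine-but-delicate point where the precise form of the FP-growth definition must be consulted.
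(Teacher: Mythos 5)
Your overall strategy is sound and is essentially a concretization of the paper's argument: the paper proves this as the composite of Theorem \ref{xxthm8.3} ($\fpc({\mathcal T})\leq cx({\mathcal T})$, obtained by bounding each entry of $A(\phi,\Sigma^n)$ by $Cn^{d-1}$ and using that the spectral radius of a nonnegative $m\times m$ matrix is at most $m$ times its largest entry) and Theorem \ref{xxthm8.4}(1) ($cx({\mathcal A})=cx(A)$, obtained from the fact that $T$ generates ${\mathcal A}$ together with Lemma \ref{xxlem8.5}, which says the polynomial-growth conditions cut out thick triangulated subcategories). Your Loewy-filtration bound $\dim\Ext^n_A(X_i,X_j)\leq C(X)\dim\Ext^n_A(T,T)$ and the observation that multiplicative constants and the matrix size disappear under $\limsup_n\log_n$ are both correct and match the paper's computations.

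There is, however, one genuine gap: the claim that ``we may assume each test object is a finite-dimensional $A$-module'' because ``an arbitrary homological shift of any $X_i$ only translates the index $n$.'' The test objects for $\fpg(\Sigma)$ range over atomic sets in $D^b(\Mod_{f.d.}\text{-}A)$, and for a general algebra $\Bbbk Q/(R)$ an atomic (or brick) object need not be a shift of a module --- the paper makes exactly this point in Remark \ref{xxrem2.2}(1), which exhibits a brick that is an honest two-term complex, and introduces the notion of a-hereditary (Definition \ref{xxdef3.4}) precisely because this reduction fails in general. So the step as written does not apply to all test objects. The fix is routine but must be supplied: for a bounded complex $X$ the truncation triangles give
$$\dim\Hom_{\mathcal A}(X,\Sigma^n Y)\;\leq\;\sum_{i,j}\dim\Ext^{\,n+i-j}_A\bigl(H^i(X),H^j(Y)\bigr),$$
a finite sum in which the homological index is shifted by a bounded amount depending only on the widths of $X$ and $Y$; combined with your Loewy-length bound on each term, this yields $\dim\Hom_{\mathcal A}(X_i,\Sigma^n X_j)\leq C'\max_{|k|\leq N}a_{n+k}$, which still washes out under $\limsup_n\log_n$. (This devissage is exactly what the paper's Lemma \ref{xxlem8.5} packages abstractly: the class of objects with polynomially bounded $\Hom(-,\Sigma^n Y)$ is a thick triangulated subcategory containing $T$, hence all of ${\mathcal A}$.) With that step inserted, your proof is complete.
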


The equality $\fpg({\mathcal A})=cx(A)-1$ holds under some 
hypotheses [Theorem \ref{xxthm8.4}(2)].

\subsection{Frobenius-Perron function}
\label{xxsec0.5}
If ${\mathcal T}$ is a triangulated category with Serre functor $S$, 
we have a fp-function 
$$fp: {\mathbb Z}^2 \longrightarrow  {\mathbb R}_{\geq 0}$$
which is defined by
$$fp(a,b):=\fpdim (\Sigma^a \circ S^b)\in {\mathbb R}_{\geq 0}.$$
Then $\fpdim({\mathcal T})$ is the value of the fp-function at $(1,0)$. 

The fp-function for the projective line ${\mathbb P}^1$ and 
the quiver $A_2$ are given in the Examples \ref{xxex5.1} and 
\ref{xxex5.4} respectively.

The statements in Theorem \ref{xxthm0.3}, Questions 
\ref{xxque0.4} and \ref{xxque7.11} indicate that $fp(1,0)$ 
predicts the representation type of ${\mathcal T}$ for
certain triangulated categories. It is expected that 
values of the fp-function at other points in ${\mathbb Z}^2$ 
are sensitive to other properties of ${\mathcal T}$.

\subsection{Other properties}
\label{xxsec0.6}
The paper contains some basic properties of $\fpdim$.
Let us mention one of them.

\begin{proposition}[Serre duality]
\label{xxpro0.6} 
Let ${\mathcal C}$ be a $\Hom$-finite category with Serre functor $S$.
Let $\sigma$ be an endofunctor of ${\mathcal C}$. 
\begin{enumerate}
\item[(1)]
If $\sigma$ has a right adjoint $\sigma^!$, then 
$$\fpdim (\sigma)=\fpdim(\sigma^! \circ S).$$
\item[(2)]
If $\sigma$ is an equivalence with quasi-inverse 
$\sigma^{-1}$, then 
$$\fpdim (\sigma)=\fpdim(\sigma^{-1} \circ S).$$
\item[(3)]
If ${\mathcal C}$ is $n$-Calabi-Yau, then we have a duality,
for all $i$,
$$\fpdim (\Sigma^i)=\fpdim (\Sigma^{n-i}).$$
\end{enumerate}
\end{proposition}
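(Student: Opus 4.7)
The plan is to reduce all three parts to the defining natural isomorphism of the Serre functor, $\Hom(X,Y)\cong \Hom(Y,SX)^{*}$, combined with the adjunction $\Hom(\sigma Y, Z)\cong \Hom(Y,\sigma^{!}Z)$, and then observe that taking transposes does not change the spectral radius.

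First I would recall that $\fpdim(\sigma)$ is computed as a supremum, over admissible test families $\Phi=\{X_{1},\dots,X_{n}\}$ in $\mathcal{C}$, of the spectral radius of the nonnegative matrix
$$M_{\sigma}(\Phi)_{ij} \;=\; \dim_{\Bbbk}\Hom(X_{i},\sigma X_{j}).$$
The conditions defining the admissible families (brick-like, finite collections in $\mathcal{C}$) depend only on the objects $X_i$, not on the endofunctor being tested, so the same class of families computes $\fpdim(\sigma)$ and $\fpdim(\sigma^{!}\circ S)$.

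For part (1), I combine the two natural isomorphisms:
$$\Hom(X_{i},\sigma X_{j}) \;\cong\; \Hom(\sigma X_{j},SX_{i})^{*} \;\cong\; \Hom(X_{j},\sigma^{!}(SX_{i}))^{*}.$$
Taking $\Bbbk$-dimensions yields $M_{\sigma}(\Phi)_{ij}=M_{\sigma^{!}\circ S}(\Phi)_{ji}$, i.e.\ the two matrices are transposes of each other. Since $\rho(M)=\rho(M^{T})$ for any square matrix, one has $\rho(M_{\sigma}(\Phi))=\rho(M_{\sigma^{!}\circ S}(\Phi))$. Taking the supremum over admissible $\Phi$ on both sides gives $\fpdim(\sigma)=\fpdim(\sigma^{!}\circ S)$. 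Part (2) is then the immediate special case in which an equivalence $\sigma$ admits its quasi-inverse $\sigma^{-1}$ as (both left and) right adjoint, so $\sigma^{!}=\sigma^{-1}$. For part (3), the $n$-Calabi--Yau hypothesis means $S\cong \Sigma^{n}$; applying (2) to the equivalence $\sigma=\Sigma^{i}$ with $\sigma^{-1}=\Sigma^{-i}$ gives $\fpdim(\Sigma^{i})=\fpdim(\Sigma^{-i}\circ \Sigma^{n})=\fpdim(\Sigma^{n-i})$.

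The only real obstacle is bookkeeping at the level of the definition of $\fpdim$: one must make sure that the admissibility condition imposed on test families $\Phi$ is intrinsic to $\Phi$ (and not coupled with the functor $\sigma$ being tested), so that the same supremum witnesses both sides of the asserted equality. Once this is verified from Definitions 2.3 and 2.4, the argument reduces to the two displayed natural isomorphisms and the transpose-invariance of the spectral radius. No delicate estimate is required, and no hypothesis on $\sigma^{!}$ beyond its existence is needed.
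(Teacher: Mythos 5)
Your proposal is correct and follows essentially the same route as the paper's proof of Proposition \ref{xxpro3.6}: Serre duality plus the adjunction show that $A(\phi,\sigma)$ and $A(\phi,\sigma^!\circ S)$ are transposes of one another, hence have equal spectral radius, and parts (2) and (3) follow as the special cases $\sigma^!=\sigma^{-1}$ and $S\cong\Sigma^n$. Your remark that the admissibility of a test family (brick or atomic set) depends only on the objects and not on the functor being tested is the right point to check and is indeed immediate from Definition \ref{xxdef2.1}(3).
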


\subsection{Computations}
\label{xxsec0.7}
Our third goal is to develop methods for computation. 
To use fp-invariants, we need to compute as many examples 
as possible. In general it is extremely difficult to 
calculate useful invariants for derived categories, as
the definitions of  these invariants are quite sophisticated.
We develop some techniques for computing fp-invariants. 
In Sections 4 to 8, we compute the
fp-dimension for some non-trivial examples. 

\subsection{Conventions}
\label{xxsec0.8}
\begin{enumerate}
\item[(1)]
Usually $Q$ means a quiver.
\item[(2)]
${\mathcal T}$ is a (pre)-triangulated category with suspension functor
$\Sigma=[1]$.
\item[(3)]
If $A$ is an algebra over the base field $\Bbbk$, then $\Mod_{f.d.}-A$
denote the category of finite dimensional left $A$-modules.
\item[(4)]
If $A$ is an algebra, then we use ${\mathfrak A}$ for the 
abelian category $\Mod_{f.d.}-A$.
\item[(5)]
When ${\mathfrak A}$ is an abelian category, we use 
${\mathcal A}$ for the bounded derived category $D^b({\mathfrak A})$.
\end{enumerate}

\bigskip

This paper is organized as follows. We provide background material
in Section 1. The basic definitions are introduced in 
Section 2. Some basic properties are given in Section 3. 
Section 4 deals with some derived categories of modules 
over commutative rings. In Section 5, we work out the fp-theories
of the projective line and quiver $A_2$, as well as an example
of non-integral $\fpdim$. In Section 6, we develop some 
techniques to handle the $\fpdim$ of projective curves 
and prove the tame-wild dichotomy of projective curves
in terms of $\fpdim$. Theorem \ref{xxthm0.3} is proved 
in Section 7 where representation types are discussed.
Section 8 focuses on the complexity 
of algebras and categories. We continue to develop the 
fp-theory in our companion paper \cite{CG}.

\section{Preliminaries}
\label{xxsec1}

\subsection{Classical Definitions}
\label{xxsec1.1}
Let $A$ be an $n\times n$-matrix over complex numbers ${\mathbb C}$.
The {\it spectral radius} of $A$ is defined to be 
$$\rho(A):=\max\{ |r_1|, |r_2|, \cdots, |r_n|\}\quad \in {\mathbb R}$$
where $\{r_1,r_2,\cdots, r_n\}$ is the complete set of eigenvalues of $A$.
When each entry of $A$ is a positive real number,
$\rho(A)$ is also called the {\it Perron root} or the 
{\it Perron-Frobenius eigenvalue} of $A$.
When applying $\rho$ to the adjacency matrix of a graph 
(or a quiver), the spectral radius of the adjacency matrix is sometimes 
called the {\it Frobenius-Perron dimension} of the graph (or the quiver).

Let us mention a classical result concerning the spectral radius 
of simple graphs. A finite graph $G$ is called {\it simple} if 
it has no loops and no multiple edges. Smith \cite{Sm} 
formulated the following result:

\begin{theorem}\cite[Theorem 1.3]{DoG}
\label{xxthm1.1}
Let $G$ be a finite, simple, and connected graph with adjacency matrix $A$.
\begin{enumerate}
\item[(1)]
$\rho(A)=2$ if and only if G is one of the extended Dynkin diagrams 
of type  $\widetilde{A}\widetilde{D}\widetilde{E}$.
\item[(2)]
$\rho(A)<2$ if and only if G is one of the Dynkin diagrams of type 
$ADE$.
\end{enumerate}
\end{theorem}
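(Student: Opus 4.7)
The plan is to prove Theorem \ref{xxthm1.1} by combining the Perron--Frobenius theorem for non-negative irreducible matrices with a strict monotonicity principle for spectral radii under proper subgraph inclusion, and then to carry out a finite combinatorial case analysis. First I would record the Perron--Frobenius machinery: since $G$ is connected, its adjacency matrix $A$ is non-negative and irreducible, so there is a strictly positive eigenvector $v$ with $Av=\rho(A)v$, and $\rho(A)$ is a simple eigenvalue. The crucial companion fact I would use throughout is strict monotonicity: if $H$ is a connected proper subgraph of $G$ obtained by deleting edges or vertices, then $\rho(H)<\rho(G)$. This follows by sandwiching the adjacency matrix of $H$ (padded by zeros) strictly below $A$ and comparing Rayleigh quotients against the Perron eigenvector of $G$.

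For the ``if'' direction I would verify by inspection that each extended Dynkin diagram $\widetilde{A}_n$, $\widetilde{D}_n$, $\widetilde{E}_6$, $\widetilde{E}_7$, $\widetilde{E}_8$ has spectral radius exactly $2$. The slick way is to exhibit the positive eigenvector whose entries are the standard marks (labels) of the affine Dynkin diagram; a direct check at each node shows $Av=2v$. For $\widetilde{A}_n$ (a cycle) the all-ones vector works since every vertex has degree $2$. Since each Dynkin diagram $A_n$, $D_n$, $E_6$, $E_7$, $E_8$ sits as a connected proper subgraph inside the corresponding extended Dynkin diagram, strict monotonicity immediately gives $\rho<2$ for every Dynkin $ADE$ graph.

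For the ``only if'' direction, suppose $G$ is connected, simple, and $\rho(G)\leq 2$. The key observation is that $G$ cannot strictly contain any extended Dynkin diagram as a proper subgraph, for otherwise strict monotonicity would give $\rho(G)>2$. The argument then reduces to classifying connected simple graphs $G$ that avoid every extended Dynkin diagram as a proper subgraph. I would organize this as a finite case analysis using the following constraints:
\begin{enumerate}
\item[(a)] $G$ contains no cycle unless $G$ itself is a cycle, since any cycle with an extra pendant edge contains $\widetilde{A}_n$ properly; this isolates the case $G=\widetilde{A}_n$ and otherwise reduces to trees.
\item[(b)] every vertex has degree at most $3$, else $G$ contains $\widetilde{D}_4$ properly, the sole exception being $G=\widetilde{D}_4$ itself.
\item[(c)] $G$ has at most one vertex of degree $3$, else $G$ contains some $\widetilde{D}_n$ properly.
\item[(d)] when $G$ is a ``$T$-shaped'' tree with three arms of lengths $p\leq q\leq r$ emanating from the branch vertex, the triple $(p,q,r)$ must satisfy $\tfrac{1}{p+1}+\tfrac{1}{q+1}+\tfrac{1}{r+1}\geq 1$, else $G$ contains $\widetilde{E}_6$, $\widetilde{E}_7$, or $\widetilde{E}_8$ properly.
\end{enumerate}
Enumerating the integer solutions to (d) yields exactly the arms realizing $D_n$ and $E_{6,7,8}$ when the inequality is strict, and the extended cases $\widetilde{E}_{6,7,8}$ when equality holds; combined with the linear (type $A$) trees, this produces precisely the $ADE$ and $\widetilde{A}\widetilde{D}\widetilde{E}$ lists, with $\rho=2$ characterizing the extended cases.

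The main obstacle is the case analysis (a)--(d) in the converse direction: installing strict monotonicity is easy, but ruling out the infinitely many potential connected trees by embedding an extended Dynkin subdiagram requires the careful bookkeeping encoded in the Diophantine condition on $(p,q,r)$. Once this combinatorial lemma is in place, the theorem follows by matching each surviving graph against the explicit computation of $\rho$ performed in the ``if'' direction.
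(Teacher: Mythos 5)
Your proposal is correct and is the standard proof of Smith's theorem: Perron--Frobenius plus strict monotonicity of $\rho$ under proper connected subgraph inclusion, the affine marks as an explicit eigenvector for the eigenvalue $2$, and the reduction of the converse to the Diophantine condition on arm lengths. The paper itself offers no proof of Theorem \ref{xxthm1.1} (it is quoted from \cite{DoG} and attributed to \cite{Sm}), so there is nothing to contrast with; your argument fills that gap in essentially the canonical way.
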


To save space we refer to \cite{DoG} and \cite{HPR} 
for the diagrams of the $ADE$ and 
$\widetilde{A}\widetilde{D}\widetilde{E}$ quivers/graphs.

In order to include some infinite-dimensional cases, we extend 
the definition of the spectral radius in the following way.

Let $A:=(a_{ij})_{n\times n}$ be an $n\times n$-matrix with 
entries $a_{ij}$ in $\overline{\mathbb R}:=
{\mathbb R}\cup \{\pm \infty\}$. 
Define $A'=(a'_{ij})_{n\times n}$ where 
$$a'_{ij}=\begin{cases} a_{ij} & a_{ij}\neq \pm \infty,\\
x_{ij} & a_{ij}=\infty,\\
-x_{ij} & a_{ij}=-\infty.
\end{cases}
$$
In other words, we are replacing $\infty$  in the
$(i,j)$-entry by a finite real number, called $x_{ij}$, in the
$(i,j)$-entry. And every $x_{ij}$ is considered as a variable or 
a function mapping ${\mathbb R}\to {\mathbb R}$. 

\begin{definition}
\label{xxdef1.2} 
Let $A$ be an $n\times n$-matrix with entries in $\overline{\mathbb R}$.
The {\it spectral radius} of $A$ is defined to be
\begin{equation}
\label{E1.2.1}\tag{E1.2.1}
\rho(A):=\liminf_{{\text{all}}\; x_{ij}\to \infty} \; \rho(A')
\quad \in \overline{\mathbb R}.
\end{equation}
\end{definition}

\begin{remark}
\label{xxrem1.3} 
It also makes sense to use $\limsup$ instead of $\liminf$ in \eqref{E1.2.1}. 
We choose to take $\liminf$ in this paper.
\end{remark}

Here is an easy example. 

\begin{example}
\label{xxex1.4}
Let $A=\begin{pmatrix} 1 & -\infty\\ 0 & 2\end{pmatrix}$. Then 
$A'=\begin{pmatrix} 1 & -x_{12}\\ 0 & 2\end{pmatrix}$. It is obvious that
$$\rho(A)=\lim_{x_{12}\to \infty} \rho(A')=\lim_{x_{12}\to \infty} 2=2.$$
\end{example}

\subsection{$\Bbbk$-linear categories}
\label{xxsec1.2} 
If ${\mathcal C}$ is a $\Bbbk$-linear category, then 
$\Hom_{\mathcal C}(M,N)$ is a $\Bbbk$-module for all
objects $M,N$ in ${\mathcal C}$. If ${\mathcal C}$ is 
also abelian, then $\Ext^i_{\mathcal C}(M,N)$ are 
$\Bbbk$-modules for all $i\geq 0$. Let $\dim$ be the 
$\Bbbk$-vector space dimension. 

\begin{remark}
\label{xxrem1.5}
One can use a dimension function other than $\dim$. Even 
when a category ${\mathcal C}$ is not $\Bbbk$-linear, it might 
still make sense to define some dimension function $\dim$ on 
the Hom-sets of the category ${\mathcal C}$. The definition of 
Frobenius-Perron dimension given in the next section can be 
modified to fit this kind of $\dim$.
\end{remark}

\subsection{Frobenius-Perron dimension of a quiver}
\label{xxsec1.3}
In this subsection we recall some known elementary definitions and 
facts.

\begin{definition}
\label{xxdef1.6} Let $Q$ be a quiver.
\begin{enumerate}
\item[(1)]
If $Q$ has finitely many vertices, then the {\it Frobenius-Perron 
dimension} of $Q$ is defined to be 
$$\fpdim Q:=\rho(A(Q))$$ 
where $A(Q)$ is the adjacency matrix of $Q$.
\item[(2)]
Let $Q$ be any quiver. 
The {\it Frobenius-Perron dimension} of $Q$ is defined to 
be 
$$\fpdim Q:=\sup\{ \fpdim Q'\}$$
where $Q'$ runs over all finite subquivers of $Q$.
\end{enumerate}
\end{definition}

See \cite[Propositions 2.1 and 3.2]{ES} for connections between 
$\fpdim$ of a quiver and its representation types, as well as its 
complexity. We need the following well-known facts in linear algebra.

\begin{lemma}
\label{xxlem1.7}
\begin{enumerate}
\item[(1)]
Let $B$ be a square matrix with nonnegative entries and let
$A$ be a principal minor of $B$. Then $\rho(A)\leq \rho(B)$.
\item[(2)]
Let $A:=(a_{ij})_{n\times n}$ and $B:=(b_{ij})_{n\times n}$ 
be two square matrices such that $0\leq a_{ij}\leq b_{ij}$
for all $i,j$. Then $\rho(A)\leq \rho(B)$.
\end{enumerate}
\end{lemma}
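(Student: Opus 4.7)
The plan is to prove part (2) first by a direct appeal to Gelfand's spectral radius formula, and then to deduce part (1) as an immediate corollary of (2) via a zero-padding trick.

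For part (2), I would fix any matrix norm $\|\cdot\|$, say the $\ell^\infty$-operator norm $\|M\|=\max_i \sum_j |m_{ij}|$. Gelfand's formula gives $\rho(M)=\lim_{k\to\infty}\|M^k\|^{1/k}$ for every square matrix $M$. A straightforward induction on $k$ shows that if $0\le a_{ij}\le b_{ij}$ for all $i,j$, then $0\le (A^k)_{ij}\le (B^k)_{ij}$ entrywise for all $k\ge 1$, since products and sums of nonnegative numbers preserve the inequality. Consequently $\|A^k\|\le \|B^k\|$, hence $\|A^k\|^{1/k}\le \|B^k\|^{1/k}$, and letting $k\to\infty$ yields $\rho(A)\le \rho(B)$.

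For part (1), suppose $A$ is the principal submatrix of the nonnegative matrix $B\in M_n$ indexed by a subset $I\subseteq\{1,\ldots,n\}$. Let $\widetilde{A}\in M_n$ be the matrix obtained from $A$ by padding with zeros: $(\widetilde{A})_{ij}=B_{ij}$ if $(i,j)\in I\times I$ and $(\widetilde{A})_{ij}=0$ otherwise. The characteristic polynomial of $\widetilde{A}$ is $\lambda^{n-|I|}$ times that of $A$, so $\widetilde{A}$ and $A$ share the same nonzero eigenvalues, giving $\rho(\widetilde{A})=\rho(A)$. Since $B$ has nonnegative entries, $0\le (\widetilde{A})_{ij}\le B_{ij}$ for all $i,j$, and part (2) yields $\rho(A)=\rho(\widetilde{A})\le \rho(B)$.

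Both steps are classical and essentially routine; the only mild obstacle is choosing a clean presentation. Gelfand's formula is probably the shortest path, though one could equally well use the Collatz--Wielandt characterization $\rho(M)=\max_{x\ge 0,\, x\ne 0}\min_{i:x_i>0}(Mx)_i/x_i$ for nonnegative $M$, applied to a Perron eigenvector of $A$ extended by zero outside $I$. I would favor Gelfand's route because it handles (2) uniformly without needing to invoke existence of a nonnegative eigenvector, and then (1) drops out by pure bookkeeping.
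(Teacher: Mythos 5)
Your proof is correct. The paper states this lemma without proof, citing it as a well-known fact in linear algebra, so there is no in-paper argument to compare against; your two-step route (Gelfand's formula plus entrywise monotonicity of powers for part (2), then zero-padding to reduce part (1) to part (2)) is a clean and standard way to supply the missing details. Two small remarks: the word ``principal minor'' in the statement is being used in the sense of ``principal submatrix,'' exactly as you interpret it (the paper uses the terms interchangeably, e.g.\ in Example 2.11); and in part (1) your identity $\rho(\widetilde{A})=\rho(A)$ is valid even when $A$ is nilpotent, since both spectral radii are then $0$, so no case needs to be excluded.
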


Let $Q$ be a quiver with vertices $\{v_1,\cdots, v_n\}$. An 
oriented cycle based at a vertex $v_i$ is called 
{\it indecomposable} if it is not a product of two oriented 
cycles based at $v_i$. For each vertex $v_i$ let $\theta_i$ be 
the number of indecomposable oriented cycles based at $v_i$. 
Define the {\it cycle number} of a quiver $Q$ to be 
$$\Theta(Q):=\max\{ \theta_i\mid \forall \; i\}.$$
The following result should be well-known.
 
\begin{theorem}
\label{xxthm1.8}
Let $Q$ be a quiver and let $\Theta(Q)$ be the cycle number of $Q$.
\begin{enumerate}
\item[(1)]
$\fpdim (Q)=0$ if and only if $\Theta(Q)=0$, namely, $Q$ is acyclic.
\item[(2)]
$\fpdim (Q)=1$ if and only if $\Theta(Q)=1$.
\item[(3)]
$\fpdim (Q)>1$ if and only if $\Theta(Q)\geq 2$.
\end{enumerate}
\end{theorem}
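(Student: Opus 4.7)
My plan is to reduce the statement to the case of a finite strongly connected quiver and then classify such quivers according to the value of $\Theta$.

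First, by the definition $\fpdim Q=\sup_{Q'} \fpdim Q'$ over finite subquivers, together with the monotonicity of $\rho$ (Lemma~\ref{xxlem1.7}) and of $\Theta$ under inclusion of subquivers, it suffices to prove the three equivalences when $Q$ is finite: every oriented cycle of $Q$ lies in a finite subquiver, so $\Theta(Q)$ is also the supremum of $\Theta(Q')$ over finite $Q'\subseteq Q$. For a finite $Q$, I would topologically sort the strongly connected components $S_1,\dots,S_k$, which puts $A(Q)$ in block upper triangular form; the eigenvalues of $A(Q)$ are then the union of the eigenvalues of the diagonal blocks, so $\rho(A(Q))=\max_j \rho(A(S_j))$. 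Since any oriented cycle based at $v$ lies entirely in the strongly connected component containing $v$, one also has $\Theta(Q)=\max_j \Theta(S_j)$. The theorem therefore reduces to the corresponding claim for a single strongly connected finite quiver $S$.

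I would next dispose of the easy cases. If $S$ consists of a single vertex carrying $k\ge 0$ self-loops, then $\theta=k$ and $A(S)=(k)$, so $\rho(S)=k$; this settles all three cases for $|S|=1$ simultaneously. If $|S|\ge 2$ and $\Theta(S)=1$, I claim $S$ must be a simple oriented cycle with no chords and no parallel edges: otherwise some vertex $v$ has either two distinct out-neighbours or two parallel outgoing edges (or an analogous in-degree violation), and using shortest return paths---which are automatically indecomposable---one produces two distinct indecomposable cycles through $v$, contradicting $\theta_v=1$. The adjacency matrix of a simple oriented cycle of length $\ell$ is a cyclic permutation with eigenvalues the $\ell$-th roots of unity, so $\rho(S)=1$. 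An acyclic quiver with more than one vertex cannot be strongly connected, so the case $\Theta(S)=0$ forces $|S|=1$ with no self-loops. This proves (1) and the implication $\Theta(S)=1\Rightarrow\rho(S)=1$ of (2).

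The remaining and main step is $\Theta(S)\ge 2\Rightarrow \rho(S)>1$, which together with the above cases yields the trichotomy by elimination. Given two distinct indecomposable cycles $C_1,C_2$ of lengths $\ell_1,\ell_2$ based at a common vertex $v$, I would pass to the finite subquiver $Q'':=C_1\cup C_2$ and count closed walks at $v$. For each binary sequence of length $a+b$ with $a$ ones and $b$ twos, concatenating the corresponding cycles in that order produces a closed walk of length $a\ell_1+b\ell_2$. The delicate point---and what I expect to be the main technical obstacle---is verifying that distinct sequences yield distinct walks as edge sequences even when $C_1$ and $C_2$ share edges: indecomposability ensures neither cycle revisits $v$ internally, so the first position where two orderings disagree is a moment at which both walks sit at $v$, and from there the walks must diverge at the first edge where $C_1$ and $C_2$ themselves differ. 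Consequently $(A(Q'')^n)_{vv}\ge\binom{a+b}{a}$ for $n=a\ell_1+b\ell_2$; taking $a=b=k$ and using $\binom{2k}{k}\ge 4^k/(2k+1)$ together with Gelfand's formula $\rho(A)\ge\limsup_n (A^n)_{vv}^{1/n}$ gives $\rho(A(Q''))\ge 4^{1/(\ell_1+\ell_2)}>1$. Lemma~\ref{xxlem1.7} then promotes this to $\rho(A(S))>1$, completing the argument.
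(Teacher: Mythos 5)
The paper does not actually contain a proof of Theorem \ref{xxthm1.8} (it is explicitly omitted "to save space"), so there is nothing to compare your argument against; judged on its own, your proof is correct and complete. The reductions are sound: passing to finite subquivers is justified because indecomposability of a cycle is detected inside any subquiver containing it, so $\Theta(Q)=\sup_{Q'}\Theta(Q')$, and the block-triangular form over strongly connected components gives $\rho(A(Q))=\max_j\rho(A(S_j))$ and $\Theta(Q)=\max_j\Theta(S_j)$. Your classification of strongly connected quivers with $\Theta=1$ works: out-degree $\geq 2$ at some vertex produces, via shortest return paths (which cannot revisit the base vertex internally and hence are indecomposable), two distinct indecomposable cycles; out-degree identically $1$ plus strong connectivity forces a simple oriented cycle, whose adjacency matrix is a permutation matrix with $\rho=1$. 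The crux, injectivity of the concatenation map in the case $\Theta\geq 2$, is exactly the point that needs care, and your resolution is right: neither indecomposable cycle can be a proper prefix of the other (the shorter one would force an internal return to $v$ in the longer one), so two orderings first differ at a position where both walks sit at $v$ and then diverge at the first edge where $C_1$ and $C_2$ differ. The estimate $\binom{2k}{k}\geq 4^k/(2k+1)$ combined with $\rho(A)\geq\limsup_n((A^n)_{vv})^{1/n}$ then gives $\rho\geq 4^{1/(\ell_1+\ell_2)}>1$, and the final trichotomy follows by elimination since the three cases for $\Theta$ are exhaustive and the three conclusions for $\rho$ are mutually exclusive. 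This is a legitimate, self-contained proof of the omitted result.
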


The proof is not hard, and to save space, it is omitted.

\section{Definitions}
\label{xxsec2}
Throughout the rest of the paper, let ${\mathcal C}$ denote a 
$\Bbbk$-linear category. A functor between two $\Bbbk$-linear 
categories is assumed to preserve the $\Bbbk$-linear structure.
For simplicity, 
$\dim(A,B)$ stands for $\dim \Hom_{\mathcal C}(A,B)$ for any 
objects $A$ and $B$ in ${\mathcal C}$.

The set of finite subsets of nonzero objects in ${\mathcal C}$ 
is denoted by $\Phi$ and the set of subsets of $n$ nonzero objects 
in ${\mathcal C}$ is denoted by $\Phi_n$ for each $n\geq 1$.
It is clear that $\Phi=\bigcup_{n\geq 1} \Phi_n$. We do not
consider the empty set as an element of $\Phi$. 

\begin{definition}
\label{xxdef2.1} 
Let $\phi:=\{X_1, X_2, \cdots,X_n\}$ be a finite subset of nonzero
objects in ${\mathcal C}$, namely, $\phi\in \Phi_n$. Let $\sigma$ 
be an endofunctor of ${\mathcal C}$.
\begin{enumerate}
\item[(1)]
The {\it adjacency matrix} of $(\phi, \sigma)$ is defined to be
$$A(\phi, \sigma):=(a_{ij})_{n\times n}, \quad 
{\text{where}}\;\; a_{ij}:=\dim(X_i, \sigma(X_j)) \;\;\forall i,j.$$
\item[(2)]
An object $M$ in ${\mathcal C}$ is called a {\it brick} 
\cite[Definition 2.4, Ch. VII]{AS} if 
\begin{equation}
\notag
\Hom_{\mathcal C}(M,M)=\Bbbk. 
\end{equation}
If ${\mathcal C}$ is a pre-triangulated category,  an object $M$ in 
${\mathcal C}$ is called an {\it atomic} object if it is a brick 
and satisfies
\begin{equation}
\label{E2.1.1}\tag{E2.1.1}
\Hom_{\mathcal C}(M, \Sigma^{-i}(M))=0, \quad \forall \; i>0.
\end{equation}
\item[(3)]
$\phi\in \Phi$ is called a {\it brick set} (respectively, an 
{\it atomic set})  if each $X_i$ is a brick (respectively, atomic) 
and 
$$\dim(X_i, X_j)=\delta_{ij}$$
for all $1\leq i,j\leq n$. The set of brick (respectively, atomic) 
$n$-object subsets is denoted by $\Phi_{n,b}$ (respectively, 
$\Phi_{n,a}$). We write $\Phi_{b}=\bigcup_{n\geq 1} 
\Phi_{n,b}$ (respectively, $\Phi_{a}=\bigcup_{n\geq 1} 
\Phi_{n,a}$). Define the {\it b-height} of ${\mathcal C}$ to be
$$h_b({\mathcal  C})=\sup\{n\mid \Phi_{n,b} \; 
{\rm{ is\; nonempty}}\}$$
and the {\it a-height} of ${\mathcal C}$ (when ${\mathcal C}$ is 
pre-triangulated) to be
$$h_a({\mathcal  C})=\sup\{n\mid \Phi_{n,a} \; 
{\rm{ is\; nonempty}}\}.$$
\end{enumerate}
\end{definition}

\begin{remark}
\label{xxrem2.2}
\begin{enumerate}
\item[(1)]
A brick may not be atomic. Let $A$ be the algebra
$$\Bbbk \langle x,y\rangle /(x^2, y^2-1, xy+yx).$$ 
This is a 4-dimensional Frobenius algebra (of injective 
dimension zero). There are two simple left $A$-modules 
$$S_0:=A/(x, y-1), \quad {\rm{and}} \quad S_1:=A/(x, y+1).$$ 
Let $M_i$ be the injective hull of $S_i$
for $i=0,1$. (Since $A$ is Frobenius, $M_i$ is projective.) There are two 
short exact sequences
$$0\longrightarrow S_0 \longrightarrow M_0 
\xrightarrow{\;\;f\;\;} S_1 \longrightarrow 0$$
and 
$$0 \longrightarrow S_1 \xrightarrow{\;\;g\;\;} M_1 
\longrightarrow S_0 \longrightarrow 0.$$
It is easy to check that $\Hom_A(M_i,M_j)=\Bbbk$ for all
$0\leq i,j\leq 1$. Let ${\mathcal A}$ be the 
derived category $D^b(\Mod_{f.d.}-A)$ and let $X$ be 
the complex
$$\cdots\longrightarrow 0 \longrightarrow M_0 \xrightarrow{g\circ f} 
M_1 \longrightarrow 0 \longrightarrow \cdots $$
An easy computation shows that $\Hom_{\mathcal A}(X,X)=\Bbbk=
\Hom_{\mathcal A}(X, X[-1])$. So $X$ is a brick, but not 
atomic. 
\item[(2)]
A brick object is called a {\it schur} object by several authors, 
see \cite{CC} and \cite{CKW}. It is also called 
{\it endo-simple} by others, see \cite{vR1, vR2}.
\item[(3)]
An atomic object in a triangulated category is close to being  
a point-object defined by Bondal-Orlov \cite[Definition 2.1]{BO1}. 
A point-object was defined on a triangulated category with 
Serre functor. In this paper we do not automatically assume 
the existence of a Serre functor in general. When 
${\mathcal C}$ is not a pre-triangulated category, we can not 
even ask for \eqref{E2.1.1}. In that case we can only talk 
about bricks.
\end{enumerate}
\end{remark}

\begin{definition}
\label{xxdef2.3}
Retain the notation as in Definition \ref{xxdef2.1}, and 
we use $\Phi_{b}$ as the testing objects. When ${\mathcal C}$ 
is a pre-triangulated category, $\Phi_{b}$ is automatically 
replaced by $\Phi_{a}$ unless otherwise stated.
\begin{enumerate}
\item[(1)]
The {\it $n$th Frobenius-Perron dimension} of $\sigma$ is defined to be
$$\fpdim^n (\sigma):=\sup_{\phi\in \Phi_{n,b}}\{\rho(A(\phi,\sigma))\}.$$
If $\Phi_{n,b}$ is empty, then by convention, $\fpdim^n(\sigma)=0$.
\item[(2)]
The {\it Frobenius-Perron dimension} of $\sigma$ is defined to be
$$\fpdim (\sigma):=\sup_n \{\fpdim^n(\sigma)\}
=\sup_{\phi\in \Phi_{b}} \{\rho(A(\phi,\sigma)) \}.$$
\item[(3)]
The {\it Frobenius-Perron growth} of $\sigma$ is defined to be
$$\fpg (\sigma):=\sup_{\phi\in \Phi_{b}} 
\{\limsup_{n\to\infty} \; \log_{n}(\rho(A(\phi,\sigma^n))) \}.$$
By convention, $\log_n 0 =-\infty$.
\item[(4)]
The {\it Frobenius-Perron curvature} of $\sigma$ is defined to be
$$\fpv (\sigma):=\sup_{\phi\in \Phi_{b}} \{\limsup_{n\to\infty} \;  
(\rho(A(\phi,\sigma^n)))^{1/n} \}.$$
This is motivated by the concept of the {\it curvature} of a 
module over an algebra due to Avramov \cite{Av}.
\item[(5)]
We say $\sigma$ has {\it fp-exponential growth} (respectively,
{\it fp-subexponential growth})
if $\fpv(\sigma)>1$ (respectively, $\fpv(\sigma)\leq 1$).
\end{enumerate}
\end{definition}

Sometimes we prefer to have all information from the Frobenius-Perron 
dimension. We make the following definition.

\begin{definition}
\label{xxdef2.4}
Let ${\mathcal C}$ be a category and $\sigma$ be an endofunctor
of ${\mathcal C}$.
\begin{enumerate}
\item[(1)]
The {\it Frobenius-Perron theory} (or fp-theory) of $\sigma$ is defined 
to be the set
$$\{\fpdim^n (\sigma^m)\}_{n\geq 1,m\geq 0}.$$
\item[(2)]
The {\it Frobenius-Perron series} (or fp-series) of $\sigma$ is defined to be
$$\FP(u, t,\sigma):=\sum_{m=0}^{\infty} \sum_{n=1}^{\infty} 
\fpdim^n (\sigma^m) t^m u^n.$$
\end{enumerate}
\end{definition}

\begin{remark}
\label{xxrem2.5} 
To define Frobenius-Perron dimension, one only needs have 
an assignment $\tau: \Phi_{n}\to M_{n\times n}(\Mod-\Bbbk)$,
for every $n\geq 1$,
satisfying the property that 

\bigskip

{\it if $\phi_1$ is a subset of $\phi_2$,
then $\tau(\phi_1)$ is a principal submatrix of $\tau(\phi_2)$.}

\bigskip

\noindent
Then we define the adjacency matrix of $\phi\in \Phi_n$ to be
$$A(\phi,\tau)=(a_{ij})_{n\times n}$$
where 
$$a_{ij}=\dim \; (\tau(\phi))_{ij}\;\; \forall i,j.$$
Then the Frobenius-Perron dimension of $\tau$ is defined 
in the same way as in Definition \ref{xxdef2.3}. If there is a 
sequence of $\tau_m$,
the Frobenius-Perron series of $\{\tau_m\}$ is defined 
in the same way as in Definition \ref{xxdef2.4} by replacing $\sigma^m$ by
$\tau_m$. See Example \ref{xxex2.6} next.
\end{remark}

\begin{example}
\label{xxex2.6}
\begin{enumerate}
\item[(1)]
Let ${\mathfrak A}$ be a $\Bbbk$-linear abelian category.
For each $m\geq 1$ and
$\phi=\{X_1,\cdots,X_n\}$, define 
$$E^{m}: \phi\longrightarrow \left(\Ext^m_{\mathfrak A}(X_i,X_j)\right)_{n\times n}.$$
By convention, let $\Ext^0_{\mathfrak A}(X_i,X_j)$ 
denote $\Hom^0_{\mathfrak A}(X_i,X_j)$.
Then, for each $m\geq 0$, one can define the Frobenius-Perron 
dimension of $E^{m}$ as mentioned in Remark \ref{xxrem2.5}.
\item[(2)]
Let ${\mathfrak A}$ be the $\Bbbk$-linear abelian category
$\Mod_{f.d.}-A$ where $A$ is a finite dimensional 
commutative algebra over a base field $\Bbbk$.
For each $m\geq 1$ and $\phi=\{X_1,\cdots,X_n\}$, define 
$$T_{m}: \phi\longrightarrow \left(\Tor_m^A(X_i,X_j)\right)_{n\times n}.$$
By convention, let $\Tor_0^A(X_i,X_j)$ denote $X_i\otimes_A X_j$.
Then, for each $m\geq 0$, one can define the Frobenius-Perron 
dimension of $T_{m}$ as mentioned in Remark \ref{xxrem2.5}.
\end{enumerate}
\end{example}

\begin{definition}
\label{xxdef2.7} 
\begin{enumerate}
\item[(1)]
Let ${\mathfrak A}$ be an abelian category. The 
{\it Frobenius-Perron dimension} of ${\mathfrak A}$
is defined to be
$$\fpdim {\mathfrak A}:=\fpdim (E^1)$$
where $E^1:=\Ext^1_{\mathfrak A}(-,-)$
is defined as in Example \ref{xxex2.6}(1). The 
{\it Frobenius-Perron theory} of ${\mathfrak A}$ is the collection
$$\{\fpdim^m (E^n)\}_{m\geq 1, n\geq 0}$$
where $E^n:=\Ext^n_{\mathcal A}(-,-)$ is defined as in Example 
\ref{xxex2.6}(1). 
\item[(2)]
Let ${\mathcal T}$ be a pre-triangulated category with suspension
$\Sigma$. The 
{\it Frobenius-Perron dimension} of ${\mathcal T}$
is defined to be
$$\fpdim {\mathcal T}:=\fpdim (\Sigma).$$
The {\it Frobenius-Perron theory} of ${\mathcal T}$ is the collection
$$\{\fpdim^m (\Sigma^n)\}_{m\geq 1, n\in {\mathbb Z}}.$$
The {\it fp-global dimension} of ${\mathcal T}$ is defined to be
$$\fpgldim {\mathcal T}:=\sup \{n \mid \fpdim(\Sigma^n)\neq 0\}.$$
If ${\mathcal T}$ possesses a Serre functor $S$, the 
{\it Frobenius-Perron $S$-theory} of ${\mathcal T}$ is the collection
$$\{\fpdim^m (\Sigma^n \circ S^ w)\}_{m\geq 1, n,w\in {\mathbb Z}}.$$
\end{enumerate}
\end{definition}

\begin{remark}
\label{xxrem2.8}
\begin{enumerate}
\item[(1)]
The Frobenius-Perron dimension (respectively, Frobenius-Perron theory, 
fp-global dimension) can be defined for 
suspended categories \cite{KV} and 
pre-$n$-angulated categories \cite{GKO} in the same
way as Definition \ref{xxdef2.7}(2) since there is a suspension 
functor $\Sigma$.
\item[(2)]
When ${\mathfrak A}$ is an abelian category, another way of defining 
the Frobenius-Perron dimension $\fpdim {\mathfrak A}$ is as follows.
We first embed ${\mathfrak A}$ into the derived category 
$D^b({\mathfrak A})$. The suspension functor $\Sigma$ of 
$D^b({\mathfrak A})$ maps ${\mathfrak A}$ to ${\mathfrak A}[1]$ 
(so it is not a functor of ${\mathfrak A}$). The adjacency matrix 
$A(\phi, \Sigma)$ is still defined as in Definition \ref{xxdef2.1}(1) 
for brick sets $\phi$ in ${\mathfrak A}$. Then we can define 
$$\fpdim(\Sigma\mid_{\mathfrak A}):
=\sup_{\phi\in \Phi_{b}, \phi\subset {\mathfrak A}} \{\rho(A(\phi,\Sigma)) \}$$
as in Definition \ref{xxdef2.3}(2) by considering only the brick sets 
in ${\mathfrak A}$. Now $\fpdim({\mathfrak A})$ agrees with 
$\fpdim(\Sigma\mid_{\mathfrak A})$.
\end{enumerate}
\end{remark}

The following lemma is clear.

\begin{lemma}
\label{xxlem2.9}
Let ${\mathfrak A}$ be an abelian category and $n\geq 1$.
Then $\fpdim^n (D^b({\mathfrak A}))\geq \fpdim^n ({\mathfrak A})$. 
A similar statement holds for $\fpdim$, $\fpg$ and $\fpv$.
\end{lemma}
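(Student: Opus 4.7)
The plan is to transfer every brick-set witness for $\fpdim^n(\mathfrak{A})$ into an atomic-set witness of equal spectral radius for $\fpdim^n(D^b(\mathfrak{A}))$, using the canonical fully faithful embedding $\iota\colon\mathfrak{A}\hookrightarrow D^b(\mathfrak{A})$ that puts an object in cohomological degree zero.

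First I would fix $\phi=\{X_1,\ldots,X_n\}\in\Phi_{n,b}$ inside $\mathfrak{A}$ and check that $\iota(\phi)$ lies in $\Phi_{n,a}$ inside $D^b(\mathfrak{A})$. Fully faithfulness of $\iota$ gives $\Hom_{D^b(\mathfrak{A})}(X_i,X_j)=\Hom_{\mathfrak{A}}(X_i,X_j)$, so the brick condition $\End(X_i)=\Bbbk$ and the orthogonality $\dim\Hom(X_i,X_j)=\delta_{ij}$ transfer unchanged. The atomicity condition \eqref{E2.1.1} amounts to $\Hom_{D^b(\mathfrak{A})}(X_i,\Sigma^{-j}X_i)=\Ext^{-j}_{\mathfrak{A}}(X_i,X_i)=0$ for $j>0$, which holds since negative Ext groups vanish. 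This is the one content-bearing step; the example in Remark \ref{xxrem2.2}(1) shows that \emph{some} vanishing hypothesis is genuinely needed, so it is worth recording that for bricks in an abelian category no such pathology can occur.

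Next I would match the two adjacency matrices: for every $m\geq 1$,
\[
A(\phi,E^m)_{ij}=\dim\Ext^m_{\mathfrak{A}}(X_i,X_j)=\dim\Hom_{D^b(\mathfrak{A})}(X_i,\Sigma^m X_j)=A(\iota(\phi),\Sigma^m)_{ij},
\]
by the standard identification of Ext with derived Hom. In particular $\rho(A(\phi,E^m))=\rho(A(\iota(\phi),\Sigma^m))$. Taking $m=1$ and then $\sup_{\phi\in\Phi_{n,b}}$ on both sides—using that $\iota(\Phi_{n,b})\subseteq\Phi_{n,a}$ in $D^b(\mathfrak{A})$ by the previous paragraph—yields $\fpdim^n(\mathfrak{A})\leq\fpdim^n(D^b(\mathfrak{A}))$. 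Passing to $\sup_n$ gives the bound for $\fpdim$.

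Finally, for $\fpg$ and $\fpv$ I would simply feed the same matrix equality $A(\phi,E^m)=A(\iota(\phi),\Sigma^m)$ into the $\limsup_m\log_m(\cdot)$ and $\limsup_m(\cdot)^{1/m}$ expressions of Definition \ref{xxdef2.3}, so that each brick-set witness in $\mathfrak{A}$ produces an atomic-set witness in $D^b(\mathfrak{A})$ with identical growth/curvature values. The supremum defining the derived-category invariant is taken over a class that contains $\iota(\Phi_b)$, which gives the stated inequalities. There is no serious obstacle: everything reduces to the identification of Ext with derived Hom, together with the vanishing of negative self-Ext that promotes bricks in $\mathfrak{A}$ to atomic objects in $D^b(\mathfrak{A})$.
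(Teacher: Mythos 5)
Your proof is correct and follows essentially the same route as the paper, which simply invokes the fully faithful embedding ${\mathfrak A}\to D^b({\mathfrak A})$ and the identification of $E^1$ with $\Sigma$. Your explicit check that a brick in ${\mathfrak A}$ becomes an atomic object in $D^b({\mathfrak A})$ (via vanishing of negative self-Ext) is a detail the paper leaves implicit but which is genuinely needed, since the derived-category invariants are tested on atomic sets.
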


\begin{proof}
This follows from the fact that there is a fully faithful 
embedding ${\mathfrak A}\to D^b({\mathfrak A})$ and that
$E^1$ on ${\mathfrak A}$ agrees with $\Sigma$ on $D^b({\mathfrak A})$.
\end{proof}

For any category ${\mathcal C}$ with an endofunctor $\sigma$, we define
the {\it $\sigma$-quiver} of ${\mathcal C}$, denoted by 
$Q^{\sigma}_{\mathcal C}$, as follows:
\begin{enumerate}
\item[(1)]
the vertex set of $Q^{\sigma}_{\mathcal C}$ consists of 
bricks in $\Phi_{1,b}$ in ${\mathcal C}$ (respectively, 
atomic objects in $\Phi_{1,a}$ when ${\mathcal C}$ is pre-triangulated), and 
\item[(2)]
the arrow set of $Q^{\sigma}_{\mathcal C}$ consists of $n_{X,Y}$-arrows
from $X$ to $Y$, for all $X,Y\in \Phi_{1,b}$ (respectively, 
in $\Phi_{1,a}$), 
where $n_{X,Y}=\dim (X,\sigma(Y))$. 
\end{enumerate}

If $\sigma$ is $E^1$, this  quiver is denoted by 
$Q_{\mathcal C}^{E^1}$, which will be used in later sections.

The following lemma follows from the definition.

\begin{lemma}
\label{xxlem2.10} Retain the above notation. Then
$\fpdim \sigma\leq \fpdim Q_{\mathcal C}^{\sigma}$.
\end{lemma}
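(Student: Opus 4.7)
The plan is to reduce the statement directly to the definition of the Frobenius-Perron dimension of the $\sigma$-quiver, by comparing adjacency matrices. Fix any brick set (resp.\ atomic set) $\phi=\{X_1,\dots,X_n\}\in \Phi_{n,b}$ (resp.\ $\Phi_{n,a}$). Each $X_i$ is by definition a vertex of $Q^{\sigma}_{\mathcal{C}}$, so these objects span a full subquiver $Q'$ of $Q^{\sigma}_{\mathcal{C}}$ on the $n$ vertices $X_1,\dots,X_n$. By the construction of $Q^{\sigma}_{\mathcal{C}}$, the number of arrows from $X_i$ to $X_j$ in $Q'$ equals $n_{X_i,X_j}=\dim(X_i,\sigma(X_j))$, which is precisely the $(i,j)$-entry of the adjacency matrix $A(\phi,\sigma)$. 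Hence $A(Q')=A(\phi,\sigma)$, and (up to transpose, which does not affect $\rho$) the spectral radii coincide.

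In the case where all entries $\dim(X_i,\sigma(X_j))$ are finite, $Q'$ is an honest finite subquiver of $Q^{\sigma}_{\mathcal{C}}$, so
$$\rho(A(\phi,\sigma))=\rho(A(Q'))=\fpdim(Q')\leq \fpdim Q^{\sigma}_{\mathcal{C}}$$
by Definition \ref{xxdef1.6}(2). If some entries equal $\infty$, I invoke the extended definition of spectral radius from Definition \ref{xxdef1.2}: for each choice of finite replacements $x_{ij}\in \mathbb{N}$ for the $\infty$-entries, the resulting integral matrix $A'$ is the adjacency matrix of a finite subquiver $Q'_{\vec{x}}$ of $Q^{\sigma}_{\mathcal{C}}$ (obtained by discarding all but $x_{ij}$ of the infinitely many arrows $X_i\to X_j$). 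Thus $\rho(A')=\fpdim(Q'_{\vec{x}})\leq \fpdim Q^{\sigma}_{\mathcal{C}}$. Taking $\liminf$ as all $x_{ij}\to\infty$ and applying \eqref{E1.2.1} yields $\rho(A(\phi,\sigma))\leq \fpdim Q^{\sigma}_{\mathcal{C}}$.

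Finally, taking the supremum over $\phi\in \Phi_{b}$ (or $\Phi_{a}$ in the pre-triangulated case) on the left-hand side gives $\fpdim \sigma\leq \fpdim Q^{\sigma}_{\mathcal{C}}$, as required. There is essentially no obstacle here beyond bookkeeping; the only mildly delicate point is the handling of infinite $\dim(X_i,\sigma(X_j))$, which is resolved by a direct appeal to the approximation in Definition \ref{xxdef1.2}.
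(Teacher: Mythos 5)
Your proof is correct and is exactly the unpacking of the definitions that the paper has in mind (the paper simply states that the lemma ``follows from the definition'' and omits the argument). Your extra care with the case of infinite entries, via Definition \ref{xxdef1.2} and finite subquivers, is a welcome bit of rigor that the paper glosses over.
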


The fp-theory was motivated by the Frobenius-Perron 
dimension of objects in tensor or fusion categories 
\cite{EG}, see the following example. 

\begin{example}
\label{xxex2.11} 
First we recall the definition of the Frobenius-Perron 
dimension given in \cite[Definitions 3.3.3 and 6.1.6]{EG}. Let 
${\mathcal C}$ be a finite semisimple $\Bbbk$-linear tensor category. 
Suppose that $\{X_1,\cdots, X_n\}$ is the complete list of 
non-isomorphic simple objects in ${\mathcal C}$. Since 
${\mathcal C}$ is semisimple, 
every object $X$ in ${\mathcal C}$ is a direct sum 
$$X=\bigoplus_{i=1}^n X_i^{\oplus a_i}$$
for some integers $a_i\geq 0$. The tensor product on ${\mathcal C}$
makes its Grothendieck ring ${\bf Gr}({\mathcal C})$ a 
${\mathbb Z}_{+}$-ring \cite[Definition 3.1.1]{EG}.
For every object $V$ in ${\mathcal C}$ and every $j$, write 
\begin{equation}
\label{E2.11.1}\tag{E2.11.1}
V\otimes_{\mathcal C} 
X_j\cong \bigoplus_{i=1}^n X_i^{\oplus a_{ij}}
\end{equation}
for some integers $a_{ij}\geq 0$.
In the Grothendieck ring ${\bf Gr}({\mathcal C})$, the left
multiplication by $V$ sends $X_j$ to $\sum_{i=1}^n a_{ij}
X_i$. Then, by \cite[Definition 3.3.3]{EG},
the {\bf Frobenius-Perron dimension} of $V$ is defined to be 
\begin{equation}
\label{E2.11.2}\tag{E2.11.2}
{\bf FPdim}(V):=\rho((a_{ij})_{n\times n}).
\end{equation}
In fact the Frobenius-Perron dimension  is defined for any object
in a ${\mathbb Z}_{+}$-ring. 

Next we use Definition \ref{xxdef2.3}(2) to calculate the 
Frobenius-Perron dimension. Let $\sigma$ be the tensor
functor $V\otimes_{\mathcal C} -$ that is a $\Bbbk$-linear endofunctor 
of ${\mathcal C}$. If $\phi$ is a brick 
subset of ${\mathcal C}$, then $\phi$ is a subset of
$\phi_n:=\{X_1,\cdots,X_n\}$. For simplicity, assume that
$\phi$ is $\{X_1,\cdots,X_s\}$ for some $s\leq n$. It 
follows from \eqref{E2.11.1} that
\begin{equation}
\notag
\Hom_{\mathcal C}(X_i, \sigma(X_j))= \Bbbk^{\oplus a_{ij}},\;\;
\forall\; i,j.
\end{equation}
Hence the adjacency matrix of $(\phi_n,\sigma)$ is 
$$A(\phi_n,\sigma)=(a_{ij})_{n\times n}$$
and the adjacency matrix of $(\phi,\sigma)$ is a 
principal minor of $A(\phi_n,\sigma)$. By Lemma 
\ref{xxlem1.7}(1), $\rho(A(\phi,\sigma))\leq
\rho(A(\phi_n,\sigma))$. By Definition \ref{xxdef2.3}(2),
the {\it Frobenius-Perron dimension} of the functor 
$\sigma=V\otimes_{\mathcal C} -$ is 
$$\fpdim(V\otimes_{\mathcal C} -)=\sup_{\phi\in \Phi_b} \{\rho(A(\phi,\sigma))\}
=\rho(A(\phi_n,\sigma))=\rho((a_{ij})_{n\times n}),$$
which agrees with \eqref{E2.11.2}. This justifies 
calling $\fpdim(V\otimes_{\mathcal C} -)$ the Frobenius-Perron 
dimension of $V$. 
\end{example}

\section{Basic properties}
\label{xxsec3}

For simplicity, ``Frobenius-Perron'' is abbreviated to ``fp''.

\subsection{Embeddings}
\label{xxsec3.1}
It is clear that the fp-series and the fp-dimensions are 
invariant under equivalences of categories. We record this 
fact below. Recall that the Frobenius-Perron series
$\FP(u,t,\sigma)$ of an endofunctor $\sigma$ is defined in 
Definition \ref{xxdef2.4}(2). 

\begin{lemma} 
\label{xxlem3.1}
Let $F: {\mathcal C}\to {\mathcal D}$ be an equivalence of categories.
Let $\sigma_{\mathcal C}$ and $\sigma_{\mathcal D}$ be endofunctors 
of ${\mathcal C}$ and ${\mathcal D}$ respectively. Suppose that
$F\circ \sigma_{\mathcal C}$ is naturally isomorphic to 
$\sigma_{\mathcal D} \circ F$. Then $\FP(u,t,\sigma_{\mathcal C})=
\FP(u,t,\sigma_{\mathcal D})$.
\end{lemma}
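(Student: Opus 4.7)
The plan is to show that an equivalence of categories induces a bijection between the indexing data (brick or atomic sets with their adjacency matrices) used to define $\FP$, and that corresponding matrices are literally equal (not merely related by a similarity). Since $\FP(u,t,\sigma)$ is assembled out of $\fpdim^n(\sigma^m) = \sup_{\phi \in \Phi_{n,b}} \rho(A(\phi, \sigma^m))$, coefficient-wise equality of the series will follow.

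First I would record the iterated intertwining: given a natural isomorphism $F \circ \sigma_{\mathcal C} \xrightarrow{\sim} \sigma_{\mathcal D} \circ F$, an easy induction on $m \geq 0$ yields a natural isomorphism $F \circ \sigma_{\mathcal C}^m \xrightarrow{\sim} \sigma_{\mathcal D}^m \circ F$ (the case $m=0$ is the identity). Next, because $F$ is an equivalence it is fully faithful, so for any objects $A,B$ of $\mathcal C$ the map $\Hom_{\mathcal C}(A,B) \to \Hom_{\mathcal D}(F(A), F(B))$ is a $\Bbbk$-linear isomorphism, and $F$ preserves $\Bbbk$-dimensions of Hom-spaces. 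In particular $F$ carries bricks to bricks; and when $\mathcal C$, $\mathcal D$ are pre-triangulated and $F$ commutes with the suspensions, $F$ carries atomic objects to atomic objects (using the intertwining for $\sigma = \Sigma$ applied to negative powers via the quasi-inverse). Thus $F$ induces a bijection $\Phi_{n,b}(\mathcal C) \to \Phi_{n,b}(\mathcal D)$ (respectively $\Phi_{n,a}$) for every $n$.

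The main step is the matrix identification. For $\phi = \{X_1, \ldots, X_n\} \in \Phi_{n,b}(\mathcal C)$, the entries of $A(\phi, \sigma_{\mathcal C}^m)$ are $\dim \Hom_{\mathcal C}(X_i, \sigma_{\mathcal C}^m(X_j))$. Applying $F$ and then the intertwining from the first step gives
\[
\dim \Hom_{\mathcal C}(X_i, \sigma_{\mathcal C}^m(X_j))
= \dim \Hom_{\mathcal D}(F(X_i), F(\sigma_{\mathcal C}^m(X_j)))
= \dim \Hom_{\mathcal D}(F(X_i), \sigma_{\mathcal D}^m(F(X_j))),
\]
so $A(\phi, \sigma_{\mathcal C}^m) = A(F(\phi), \sigma_{\mathcal D}^m)$ as matrices, and a fortiori their spectral radii agree.

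Finally, taking suprema over $\phi \in \Phi_{n,b}(\mathcal C)$ on one side corresponds to taking suprema over $F(\phi) \in \Phi_{n,b}(\mathcal D)$ on the other, via the bijection from step two. Hence $\fpdim^n(\sigma_{\mathcal C}^m) = \fpdim^n(\sigma_{\mathcal D}^m)$ for all $n \geq 1$ and $m \geq 0$, which by Definition \ref{xxdef2.4}(2) gives $\FP(u,t,\sigma_{\mathcal C}) = \FP(u,t,\sigma_{\mathcal D})$. The only mildly subtle point, and what I would treat most carefully, is the bijection on testing sets: one must verify that brick-ness (or atomicity) transfers under $F$ and that $F$ is essentially surjective in the form needed (using the quasi-inverse to lift testing sets from $\mathcal D$ back to $\mathcal C$) so that both suprema range over equinumerous collections, not just one being a subset of the other.
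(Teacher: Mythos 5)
Your proof is correct and is precisely the argument the paper has in mind: the authors state this lemma without proof ("the following lemma is clear"), and the intended justification is exactly your observation that a fully faithful, essentially surjective $F$ intertwining the endofunctors induces a bijection on brick (resp.\ atomic) testing sets and preserves the adjacency matrices entrywise, hence all $\fpdim^n(\sigma^m)$ and the series. Your care about transferring atomicity (via compatibility with suspensions and the quasi-inverse) is the right subtle point to flag, and the same matrix comparison reappears in the paper's proof of Theorem \ref{xxthm3.2}, where only faithfulness is assumed and one gets an inequality instead.
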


Let ${\mathbb R}_{+}$ denote the set of non-negative real numbers
union with $\{\infty\}$. Let 
$$f(u,t):=\sum_{m,n=0}^{\infty} f_{m,n} t^m u^n \quad {\text{and}} \quad
g(u,t):= \sum_{m,n=0}^{\infty} g_{m,n} t^m u^n$$ 
be two elements in 
${\mathbb R}_{+}[[u,t]]$. We write $f\leq g$ if $f_{m,n}\leq g_{m,n}$ 
for all $m,n$. 

\begin{theorem}
\label{xxthm3.2}
Let $F: {\mathcal C}\to {\mathcal D}$ be a faithful functor that 
preserves brick subsets. 
\begin{enumerate}
\item[(1)]
Let $\sigma_{\mathcal C}$ and 
$\sigma_{\mathcal D}$ be endofunctors of ${\mathcal C}$ and 
${\mathcal D}$ respectively. Suppose that $F\circ \sigma_{\mathcal C}$ is 
naturally isomorphic to $\sigma_{\mathcal D} \circ F$. Then 
$\FP(u,t,\sigma_{\mathcal C})\leq \FP(u,t,\sigma_{\mathcal D})$.
\item[(2)]
Let $\tau_{\mathcal C}$ and 
$\tau_{\mathcal D}$ be assignments of ${\mathcal C}$ and 
${\mathcal D}$ respectively satisfying the property in Remark 
\ref{xxrem2.5}. Suppose that $\rho(A(\phi, \tau_{\mathcal C})) \leq 
\rho(A(F(\phi), \tau_{\mathcal D}))$ for all $\phi\in 
\Phi_{n,b}({\mathcal C})$ and all $n$. Then 
$\FP(u,t,\tau_{\mathcal C})\leq \FP(u,t,\tau_{\mathcal D})$.
\end{enumerate}
\end{theorem}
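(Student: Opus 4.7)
The plan is to reduce both parts to an entry-wise comparison of adjacency matrices, and then invoke Lemma \ref{xxlem1.7}(2). Since the Frobenius-Perron series is defined as
$$\FP(u,t,\sigma) \;=\; \sum_{m=0}^{\infty}\sum_{n=1}^{\infty}\fpdim^n(\sigma^m)\, t^m u^n,$$
the inequality $\FP(u,t,\sigma_{\mathcal C})\leq \FP(u,t,\sigma_{\mathcal D})$ will follow from establishing $\fpdim^n(\sigma_{\mathcal C}^m)\leq \fpdim^n(\sigma_{\mathcal D}^m)$ for every $m\geq 0$ and $n\geq 1$. Fixing $m,n$, I will unravel $\fpdim^n$ as the supremum of spectral radii $\rho(A(\phi,\sigma_{\mathcal C}^m))$ over $\phi\in\Phi_{n,b}({\mathcal C})$, so the whole statement reduces to producing, for each such $\phi$, a matching brick set in ${\mathcal D}$ whose adjacency matrix dominates $A(\phi,\sigma_{\mathcal C}^m)$ entry-wise.

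The natural candidate is of course $F(\phi)$. By hypothesis $F$ preserves brick subsets, so $F(\phi)\in \Phi_{n,b}({\mathcal D})$ (in particular $|F(\phi)|=n$, so the adjacency matrices have the same size). Next I will iterate the natural isomorphism $F\circ\sigma_{\mathcal C}\cong\sigma_{\mathcal D}\circ F$ by a straightforward induction on $m$ to obtain $F\circ\sigma_{\mathcal C}^m\cong\sigma_{\mathcal D}^m\circ F$ for all $m\geq 0$ (with the base case $m=0$ being the identity). Writing $\phi=\{X_1,\dots,X_n\}$, faithfulness of $F$ gives an injection
$$\Hom_{\mathcal C}\bigl(X_i,\sigma_{\mathcal C}^m(X_j)\bigr)\;\hookrightarrow\;\Hom_{\mathcal D}\bigl(F(X_i),F(\sigma_{\mathcal C}^m(X_j))\bigr)\;\cong\;\Hom_{\mathcal D}\bigl(F(X_i),\sigma_{\mathcal D}^m(F(X_j))\bigr),$$
so $A(\phi,\sigma_{\mathcal C}^m)_{ij}\leq A(F(\phi),\sigma_{\mathcal D}^m)_{ij}$ for every $i,j$. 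Lemma \ref{xxlem1.7}(2) then yields $\rho(A(\phi,\sigma_{\mathcal C}^m))\leq \rho(A(F(\phi),\sigma_{\mathcal D}^m))\leq \fpdim^n(\sigma_{\mathcal D}^m)$. Taking the supremum over $\phi\in\Phi_{n,b}({\mathcal C})$ completes part (1).

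For part (2) the argument is entirely analogous but simpler: the role of the iterated natural isomorphism (which was only needed to compare adjacency matrices $A(\phi,\sigma_{\mathcal C}^m)$ and $A(F(\phi),\sigma_{\mathcal D}^m)$) is now played directly by the hypothesis $\rho(A(\phi,\tau_{\mathcal C}))\leq \rho(A(F(\phi),\tau_{\mathcal D}))$. So for each brick set $\phi\in\Phi_{n,b}({\mathcal C})$ one has $\rho(A(\phi,\tau_{\mathcal C}))\leq \rho(A(F(\phi),\tau_{\mathcal D}))\leq \fpdim^n(\tau_{\mathcal D})$, and taking the supremum over $\phi$ gives $\fpdim^n(\tau_{\mathcal C})\leq \fpdim^n(\tau_{\mathcal D})$ for every $n$; summing these over $n$ (and over the implicit index $m$ if $\{\tau_m\}$ is a sequence) yields the desired inequality of fp-series.

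There is no real obstacle here beyond bookkeeping. The only point that deserves a little care is the compatibility of the iteration $F\circ\sigma_{\mathcal C}^m\cong\sigma_{\mathcal D}^m\circ F$ with the natural isomorphism given in the hypothesis; one should verify that horizontally composing the natural isomorphism with itself (using naturality to move $F$ past successive applications of $\sigma_{\mathcal C}$) yields the required isomorphism at each level $m$. Everything else is a direct consequence of faithfulness and Lemma \ref{xxlem1.7}(2).
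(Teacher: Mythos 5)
Your proposal is correct and follows essentially the same route as the paper's own proof: compare $A(\phi,\sigma_{\mathcal C}^m)$ with $A(F(\phi),\sigma_{\mathcal D}^m)$ entry-wise via faithfulness and the (iterated) natural isomorphism, apply Lemma \ref{xxlem1.7}(2), and take suprema. The only difference is that you make explicit the induction giving $F\circ\sigma_{\mathcal C}^m\cong\sigma_{\mathcal D}^m\circ F$, which the paper compresses into ``similarly, for all $n,m$.''
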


\begin{proof}
(1) For every $\phi=\{X_1,\cdots,X_n\}\in \Phi_{n}({\mathcal C})$, 
let $F(\phi)$ be $\{F(X_1),\cdots, F(X_n)\}$ in
$\Phi_{n}({\mathcal D})$. By hypothesis, if $\phi\in 
\Phi_{n,b}({\mathcal C})$, then 
$F(\phi)$ is in $\Phi_{n,b}({\mathcal D})$.
Let $A=(a_{ij})$ (respectively, 
$B=(b_{ij})$) be the adjacency matrix of $(\phi,\sigma_{\mathcal C})$ 
(respectively, of $(F(\phi), \sigma_{\mathcal D})$).
Then, by the faithfulness of $F$,
$$\begin{aligned}
a_{ij}&=\dim (X_i, \sigma_{\mathcal C}(X_j))
\leq \dim (F(X_i), F(\sigma_{\mathcal C}(X_j)))\\
&=\dim (F(X_i), \sigma_{\mathcal D}(F(X_j)))
=b_{ij}.
\end{aligned}
$$
By Lemma \ref{xxlem1.7}(2), 
\begin{equation}
\label{E3.2.1}\tag{E3.2.1}
\rho(A(\phi,\sigma_{\mathcal C})) =:\rho(A)\leq 
\rho(B):=\rho(A(F(\phi), \sigma_{\mathcal D})).
\end{equation}
By definition,
\begin{equation}
\label{E3.2.2}\tag{E3.2.2}
\fpdim^n(\sigma_{\mathcal C})\leq \fpdim^n(\sigma_{\mathcal D}).
\end{equation}
Similarly, for all $n,m$, 
$\fpdim^n(\sigma^m_{\mathcal C})\leq \fpdim^n(\sigma^m_{\mathcal D})$.
The assertion follows.

(2) The proof of part (2) is similar.
\end{proof}

Theorem \ref{xxthm0.1} follows from Theorem \ref{xxthm3.2}.

\subsection{(a-)Hereditary algebras and categories}
\label{xxsec3.2}
Recall that the global dimension of an abelian category 
${\mathfrak A}$ is defined to be
$$\gldim {\mathfrak A}:=\sup\{ n\mid \Ext^n_{\mathfrak A}(X,Y)\neq 0,
\; {\rm{for\; some\; }} X,Y\in {\mathfrak A}\}.$$
The global dimension of an algebra $A$ is defined to be the 
global dimension of the category of left $A$-modules. An algebra 
(or an abelian category) is called {\it hereditary} if it has 
global dimension at most one.

There is a nice property concerning the indecomposable objects 
in the derived category of a hereditary abelian  category
(see \cite[Section 2.5]{Ke1}).

\begin{lemma}
\label{xxlem3.3} 
Let ${\mathfrak A}$ be a hereditary abelian category. Then
every indecomposable object in the derived category 
$D({\mathfrak A})$ is isomorphic to a shift of an object in
${\mathfrak A}$.
\end{lemma}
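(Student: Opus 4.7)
The plan is to prove the stronger statement that every object $X^\bullet$ of $D(\mathfrak{A})$ with bounded cohomology is isomorphic to the direct sum $\bigoplus_{i \in \mathbb{Z}} H^i(X^\bullet)[-i]$ of its shifted cohomology objects. The lemma then follows at once: an indecomposable object must have exactly one nonzero cohomology $H^n(X^\bullet)$, and that object must itself be indecomposable in $\mathfrak{A}$, so $X^\bullet \cong H^n(X^\bullet)[-n]$ is a shift of an object of $\mathfrak{A}$.

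The key input is heredity, which I would use in the following form: for $A, B \in \mathfrak{A}$ and integers $i, j$ with $j - i \geq 2$,
\begin{equation*}
\Hom_{D(\mathfrak{A})}(A[i], B[j]) \;\cong\; \Ext^{j-i}_{\mathfrak{A}}(A, B) \;=\; 0.
\end{equation*}
Given this, I would induct on the number of nonzero cohomology objects of $X^\bullet$. Let $n$ be the smallest integer with $H^n(X^\bullet) \neq 0$ and consider the canonical truncation distinguished triangle
\begin{equation*}
H^n(X^\bullet)[-n] \longrightarrow X^\bullet \longrightarrow \tau_{>n}(X^\bullet) \xrightarrow{\;\delta\;} H^n(X^\bullet)[-n+1].
\end{equation*}
By the induction hypothesis, $\tau_{>n}(X^\bullet) \cong \bigoplus_{i > n} H^i(X^\bullet)[-i]$, so $\delta$ decomposes as a family of maps $H^i(X^\bullet)[-i] \to H^n(X^\bullet)[-n+1]$ for $i > n$. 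Each such map is classified by an element of $\Ext^{i-n+1}_{\mathfrak{A}}(H^i(X^\bullet), H^n(X^\bullet))$, and the exponent $i - n + 1 \geq 2$ forces this group to vanish by heredity. Hence $\delta = 0$, the triangle splits, and $X^\bullet \cong H^n(X^\bullet)[-n] \oplus \tau_{>n}(X^\bullet)$, completing the induction.

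The main obstacle is the unbounded case, where the induction does not terminate. I would handle it by reformulating the argument as: whenever $X^\bullet$ has at least two nonzero cohomology objects, the truncation triangle above exhibits $H^n(X^\bullet)[-n]$ as a proper direct summand of $X^\bullet$, so $X^\bullet$ is decomposable. Therefore any indecomposable object of $D(\mathfrak{A})$ must have a single nonzero cohomology, reducing to the trivial case of the decomposition and yielding the lemma. (In the Hom-finite setting of primary interest in this paper a Krull–Schmidt argument on any bounded truncation makes this completely rigorous.)
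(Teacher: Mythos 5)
The paper offers no proof of this lemma at all --- it simply cites \cite[Section 2.5]{Ke1} --- and your truncation-triangle argument is precisely the standard proof behind that citation. Your bounded case is complete and correct: with $n$ minimal such that $H^n(X^\bullet)\neq 0$ one has $\tau_{\le n}X^\bullet\cong H^n(X^\bullet)[-n]$, the connecting map $\delta$ has components in $\Ext^{i-n+1}_{\mathfrak A}(H^i(X^\bullet),H^n(X^\bullet))$ with $i-n+1\ge 2$, so $\delta=0$ and the induction on the number of nonzero cohomologies closes. Since the paper only ever invokes the lemma inside $D^b({\mathfrak A})$ (Definition \ref{xxdef3.4} and Theorem \ref{xxthm3.5}), this is all that is actually used.

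Your treatment of the unbounded case, however, has a real gap as written. The assertion that the truncation triangle splits whenever $X^\bullet$ has two nonzero cohomologies needs $\Hom_{D({\mathfrak A})}\bigl(\tau_{>n}X^\bullet,(\tau_{\le n}X^\bullet)[1]\bigr)=0$ for an \emph{arbitrary} complex $\tau_{>n}X^\bullet$; in your induction this vanishing was extracted from the inductive hypothesis that $\tau_{>n}X^\bullet$ is already a sum of shifted objects of ${\mathfrak A}$, which is exactly what is unavailable when the induction does not terminate. (A complex unbounded below also has no smallest $n$ with $H^n\ne 0$, so the triangle must be taken at an arbitrary $n$, with $\tau_{\le n}X^\bullet$ no longer a single shifted object.) The standard repair is the two-row hypercohomology spectral sequence, which for hereditary ${\mathfrak A}$ degenerates to a natural exact sequence
\begin{equation*}
0\longrightarrow \prod_i \Ext^1_{\mathfrak A}\bigl(H^i(Y),H^{i-1}(Z)\bigr)\longrightarrow \Hom_{D({\mathfrak A})}(Y,Z)\longrightarrow \prod_i \Hom_{\mathfrak A}\bigl(H^i(Y),H^i(Z)\bigr)\longrightarrow 0;
\end{equation*}
taking $Y=\tau_{>n}X^\bullet$ and $Z=(\tau_{\le n}X^\bullet)[1]$, both outer terms vanish for degree reasons, so every truncation triangle splits and an indecomposable object has exactly one nonzero cohomology. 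Either supply this (or an equivalent d\'evissage), or state the lemma for $D^b({\mathfrak A})$, which is all the paper requires.
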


Note that every brick (or atomic) object in an additive category 
is indecomposable. Based on the property in the above lemma, 
we make a definition.

\begin{definition}
\label{xxdef3.4}
An abelian category ${\mathfrak A}$ is called {\it a-hereditary} 
(respectively,  {\it b-hereditary}) if every atomic (respectively, 
brick) object $X$ in the bounded derived category $D^b({\mathfrak A})$ 
is of the form $M[i]$ for some object $M$ in ${\mathfrak A}$ and 
$i\in {\mathbb Z}$. The object $M$ is automatically a brick object 
in ${\mathfrak A}$.
\end{definition}

If $\alpha$ is an auto-equivalence of an abelian category
${\mathfrak A}$, then it extends naturally to an auto-equivalence,
denoted by $\overline{\alpha}$, of the derived category 
${\mathcal A}:=D^b({\mathfrak A})$. The main result in this 
subsection is the following. Recall that the $b$-height of 
${\mathfrak A}$, denoted by $h_b({\mathfrak A})$, is defined
in Definition \ref{xxdef2.1}(3) and that the Frobenius-Perron
global dimension of ${\mathcal A}$, denoted by $\fpgldim 
{\mathcal A}$, is defined in Definition \ref{xxdef2.7}(2). 

\begin{theorem}
\label{xxthm3.5}
Let ${\mathfrak A}$ be an a-hereditary abelian category with 
an auto-equivalence $\alpha$. For each $n$, define 
$n'=\min\{n,h_b({\mathfrak A})\}$. Let ${\mathcal A}$ be 
$D^b({\mathfrak A})$. 
\begin{enumerate}
\item[(1)]
If $m<0$ or $m>\gldim {\mathfrak A}$, then
$$\fpdim(\Sigma^m \circ \overline{\alpha})=0.$$
As a consequence, $\fpgldim {\mathcal A}\leq \gldim 
{\mathfrak A}$.
\item[(2)]
For each $n$, 
\begin{equation}
\label{E3.5.1}\tag{E3.5.1}
\fpdim^n(\alpha) \leq \fpdim^n(\overline{\alpha})\leq 
\max_{1\leq i\leq n'}\{\fpdim^{i}(\alpha)\}.
\end{equation}
If $\gldim {\mathfrak A}<\infty$, then 
\begin{equation}
\label{E3.5.2}\tag{E3.5.2}
\fpdim^n(\overline{\alpha})=
\max_{1\leq i\leq n'}\{\fpdim^{i}(\alpha)\}.
\end{equation}
\item[(3)]
Let $g:=\gldim {\mathfrak A}<\infty$. Let $\beta$ be the 
assignment $(X,Y)\to (\Ext^g_{\mathfrak A}(X,\alpha(Y)))$.
Then
\begin{equation}
\label{E3.5.3}\tag{E3.5.3}
\fpdim^n(\Sigma^{g}\circ \overline{\alpha})
=\max_{1\leq i\leq n'}\{\fpdim^{i}(\beta)\}.
\end{equation}
\item[(4)]
For every hereditary abelian category ${\mathfrak A}$, we have 
$\fpdim ({\mathcal A})=\fpdim ({\mathfrak A})$.
\end{enumerate}
\end{theorem}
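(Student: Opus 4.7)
The plan is to analyze the adjacency matrix of any atomic set in $\mathcal{A}$ using the a-hereditary hypothesis, which forces every $\phi \in \Phi_{n,a}(\mathcal{A})$ to take the form $\{M_1[i_1], \ldots, M_n[i_n]\}$ with each $M_k$ a brick in $\mathfrak{A}$. Grouping by cohomological level as $\phi = \bigsqcup_d \phi_d$ with $\phi_d = \{M_k : i_k = d\}$, the atomic-set condition forces each $\phi_d$ to be a brick set in $\mathfrak{A}$ of size $n_d \leq n' = \min(n, h_b(\mathfrak{A}))$. The key computation is
$$\dim \Hom_{\mathcal{A}}\bigl(M_k[i_k], \Sigma^m \overline{\alpha}(M_l[i_l])\bigr) = \dim \Ext^{m + i_l - i_k}_{\mathfrak{A}}(M_k, \alpha(M_l)),$$
which vanishes unless $0 \leq m + i_l - i_k \leq g$, where $g = \gldim \mathfrak{A}$. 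Part (1) is then immediate: after sorting so $i_1 \leq \cdots \leq i_n$, the constraint makes $A(\phi, \Sigma^m \circ \overline{\alpha})$ strictly upper triangular for $m < 0$ and strictly lower triangular for $m > g$, hence nilpotent with spectral radius zero, which also forces $\fpgldim \mathcal{A} \leq g$.

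For part (3), the same ordering with $m = g$ forces nonzero entries only when $-g \leq i_l - i_k \leq 0$, so $A(\phi, \Sigma^g \circ \overline{\alpha})$ is lower block-triangular and its diagonal blocks at level $d$ are precisely the $\beta$-adjacency matrices of the brick sets $\phi_d$. Since the spectral radius of a block-triangular matrix equals the maximum of the spectral radii of its diagonal blocks, $\rho(A(\phi, \Sigma^g \circ \overline{\alpha})) = \max_d \rho(A(\phi_d, \beta)) \leq \max_{1 \leq i \leq n'} \fpdim^i(\beta)$. For the reverse inequality, any brick set $\phi_0 \subset \mathfrak{A}$ of size $i \leq n'$ placed at level $0$ is automatically an atomic set in $\mathcal{A}$ whose adjacency matrix equals the $\beta$-adjacency of $\phi_0$; padding with $n - i$ shifts of a fixed brick at levels $g+1, 2(g+1), \ldots$ extends this to size exactly $n$ without creating interactions, since all cross-terms then involve $\Ext^{>g} = 0$. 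Part (4) is the special case $\alpha = \mathrm{id}$, $g = 1$ of (3), for which $\beta = E^1$: one obtains $\fpdim^n(\Sigma) = \max_{1 \leq i \leq n'} \fpdim^i(E^1)$, and taking $\sup_n$ yields $\fpdim(\mathcal{A}) = \fpdim(\mathfrak{A})$.

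Part (2) follows the same template but is more delicate. The LHS inequality is straightforward since brick sets in $\mathfrak{A}$ at level $0$ give atomic sets in $\mathcal{A}$ with matching $\alpha$- and $\overline{\alpha}$-adjacency matrices. For the RHS upper bound, $A(\phi, \overline{\alpha})$ has entries $\dim \Ext^{i_l - i_k}_{\mathfrak{A}}(M_k, \alpha(M_l))$, which vanish automatically whenever $i_l < i_k$, so ascending sort makes the matrix upper block-triangular with diagonal blocks equal to the $\alpha$-adjacency of the $\phi_d$; this gives $\rho \leq \max_{1 \leq i \leq n'} \fpdim^i(\alpha)$ unconditionally, explaining why \eqref{E3.5.1} holds without any hypothesis on $\gldim$. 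The main obstacle is the equality \eqref{E3.5.2}: the padding construction used in (3) requires a spacing of at least $g+1$ between padding objects to ensure Ext-orthogonality, so it only works when $\gldim \mathfrak{A} < \infty$. Under this hypothesis, the same padding produces atomic sets of exact size $n$ realizing each $\fpdim^i(\alpha)$ with $i \leq n'$, yielding the desired equality.
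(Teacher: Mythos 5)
Your proposal is correct and follows essentially the same route as the paper's proof: decompose an atomic set by cohomological degree, observe that the adjacency matrix becomes block-triangular with diagonal blocks given by the $\alpha$- (resp.\ $\beta$-) adjacency matrices of the level sets, bound the spectral radius by the maximum over the blocks, and recover the reverse inequalities by padding brick sets with well-spaced shifts (the paper phrases this as monotonicity of $\fpdim^n$ in $n$, which is the same padding argument). No substantive differences to report.
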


\begin{proof} (1) Since ${\mathcal A}$ is a-hereditary, every
atomic object in ${\mathcal A}$ is of the form $M[i]$.

Case 1: $m<0$. Write $\phi$ as $\{M_1[d_1], \cdots, M_n[d_n]\}$ 
where $d_i$ is decreasing and $M_i$ is in ${\mathfrak A}$. Then, 
for $i\leq j$,
$$a_{ij}=\Hom_{\mathcal A}
(M_i[d_i],(\Sigma^m \circ \overline{\alpha} )M_j[d_j])
=\Hom_{\mathcal A}(M_i,\alpha(M_j)[d_j-d_i+m])=0$$
since $d_j-d_i+m<0$. Thus the adjacency matrix $A:=(a_{ij})_{n\times n}$ 
is strictly lower triangular. As a consequence, $\rho(A)=0$. By 
definition, $\fpdim (\Sigma^m \circ \overline{\alpha})=0$.

Case 2: $m>\gldim {\mathfrak A}$. Write $\phi$ as $\{M_1[d_1], 
\cdots M_n[d_n]\}$ where $d_i$ is increasing and $M_i$ is in 
${\mathfrak A}$. Then, for $i\geq j$,
$$a_{ij}=\Hom_{\mathcal A}
(M_i[d_i],(\Sigma^m \circ \overline{\alpha} )M_j[d_j])
=\Hom_{\mathcal A}(M_i,\alpha(M_j)[d_j-d_i+m])=0$$
since $d_j-d_i+m>\gldim {\mathfrak A}$. Thus the adjacency 
matrix $A:=(a_{ij})_{n\times n}$ is strictly upper triangular. 
As a consequence, $\rho(A)=0$. By definition, 
$\fpdim (\Sigma^m \circ \overline{\alpha})=0$.

(2) Let $F$ be the canonical fully faithful embedding 
${\mathfrak A}\to {\mathcal A}$. By Theorem 
\ref{xxthm3.2} and \eqref{E3.2.2}, 
$$\fpdim^{n}(\alpha)\leq \fpdim^n(\overline{\alpha}).$$
For the other assertion, write $\phi$ as a disjoint union 
$\phi_{d_1}\cup \cdots \cup \phi_{d_s}$ where $d_i$ is strictly 
decreasing and the subset $\phi_{d_i}$ consists of objects of 
the form $M[d_i]$ for $M\in {\mathfrak A}$. For any objects 
$X\in \phi_{d_i}$ and $Y\in \phi_{d_j}$ for $i<j$, 
$\Hom_{\mathcal A}(X,Y)=0$. Thus the adjacency matrix of 
$(\phi, \overline{\alpha})$ is of the form
\begin{equation}
\label{E3.5.4}\tag{E3.5.4}
A(\phi, \overline{\alpha})=\begin{pmatrix} A_{11} & 0      &0 & \cdots &0\\
                                 \ast   & A_{22} &0 &\cdots & 0\\
			       \ast &\ast &A_{33} &\cdots &0\\
			     \ldots &\ldots &\ldots &\ldots &0\\
			   \ast& \ast&\ast &\cdots & A_{ss}
		       \end{pmatrix}
\end{equation}
where each $A_{ii}$ is the adjacency matrix 
$A(\phi_{d_i}, \overline{\alpha})$. For each $\phi_{d_i}$, 
we have
$$A(\phi_{d_i}, \overline{\alpha})=A(\phi_{d_i}[-d_i],\overline{\alpha})
=A(\phi_{d_i}[-d_i],\alpha)$$
which implies that 
$$\rho(A_{ii})\leq \fpdim^{s_i}(\alpha)\leq \max_{1\leq j\leq n'} \fpdim^{j}(\alpha) $$
where $s_i$ is the size of $A_{ii}$ and $n'=\min\{n, h_b({\mathfrak A})\}$.
By using the matrix \eqref{E3.5.4}, 
$$\rho(A(\phi, \overline{\alpha}))=\max_i\{\rho(A_{ii})\}\leq 
\max_{1\leq j\leq n'} \fpdim^{j}(\alpha).$$ 
Then \eqref{E3.5.1} follows. 

Suppose now that $g:=\gldim {\mathfrak A}<\infty$. Let $\phi\in 
\Phi_{n,a}({\mathcal A})$. Pick any $M\in \Phi_{1,b}({\mathfrak A})$. 
Then, for $g'\gg g$, $\phi':=\phi\cup\{ M[g']\}\in 
\Phi_{n+1,a}({\mathcal A})$. By Lemma \ref{xxlem1.7}(1), 
$\rho(A(\phi', \overline{\alpha}))\geq  \rho(A(\phi, \overline{\alpha}))$.
Hence $\fpdim^n (\overline{\alpha})$ is increasing as $n$ increases.
Therefore \eqref{E3.5.2} follows from \eqref{E3.5.1}.

(3) The proof is similar to the proof of part (2). Let $F$ be the 
canonical fully faithful embedding ${\mathfrak A}\to {\mathcal A}$. By 
Theorem \ref{xxthm3.2}(2) and \eqref{E3.2.2}, 
$$\fpdim^n(\beta)\leq \fpdim^{n}(\Sigma^{g}\circ \overline{\alpha}).$$
By the argument at the end of proof of part (2),
$\fpdim^{n}(\Sigma^{g}\circ \overline{\alpha})$ increases when $n$ 
increases. Then 
$$\max_{1\leq j\leq n'} \fpdim^{j}(\beta)
\leq \fpdim^{n}(\Sigma^{g}\circ \overline{\alpha}).$$

For the other direction, write $\phi$ as a disjoint union 
$\phi_{d_1}\cup \cdots \cup \phi_{d_s}$ where $d_i$ is strictly 
increasing and $\phi_{d_i}$ consists of objects of the form 
$M[d_i]$ for $M\in {\mathfrak A}$. For objects $X\in \phi_{d_i}$ 
and $Y\in \phi_{d_j}$ for $i<j$, 
$\Hom_{\mathcal A}(X,\Sigma^{g}(\alpha(Y)))=0$. Let 
$\gamma=\Sigma^g \circ \overline{\alpha}$.
Then the adjacency matrix of $(\phi, \gamma)$ is of the form
\eqref{E3.5.4}, namely, 
\begin{equation}
\notag
A(\phi, \gamma)=\begin{pmatrix} A_{11} & 0      &0 & \cdots &0\\
                                 \ast   & A_{22} &0 &\cdots & 0\\
			       \ast &\ast &A_{33} &\cdots &0\\
			     \ldots &\ldots &\ldots &\ldots &0\\
			   \ast& \ast&\ast &\cdots & A_{ss}
		       \end{pmatrix}
\end{equation}
where each $A_{ii}$ is the adjacency matrix 
$A(\phi_{d_i}, \gamma)$. For each $\phi_{d_i}$, we have
$$A(\phi_{d_i}, \gamma)=A(\phi_{d_i}[-d_i],\gamma)
=A(\phi_{d_i}[-d_i],\beta)$$
which implies that 
$$\rho(A_{ii})\leq \fpdim^{s_i}(\beta)\leq 
\max_{1\leq j\leq n'} \fpdim^{j}(\beta) $$
where $s_i$ is the size of $A_{ii}$. By using matrix \eqref{E3.5.4}, 
$$\rho(A(\phi, \gamma))=\max_i\{\rho(A_{ii})\}\leq 
\max_{1\leq j\leq n'} \fpdim^{j}(\beta).$$ 
The assertion follows.

(4) Take $\alpha$ to be the identity functor of ${\mathfrak A}$ 
and $g=1$ (since ${\mathfrak A}$ is hereditary). By \eqref{E3.5.3}, 
we have
$$\fpdim^n (\Sigma)=\max_{1\leq i\leq n'}\{\fpdim^{i}(E^1)\}.$$
By taking $\sup_n$, we obtain that $\fpdim(E^1)=\fpdim(\Sigma)$.
The assertion follows.
\end{proof}

\subsection{Categories with Serre functor}
\label{xxsec3.3}

Recall from \cite[Section 2.6]{Ke2} that if a $\Hom$-finite category 
${\mathcal C}$ has a Serre functor $S$, then there is a natural isomorphism 
$$\Hom_{\mathcal C}(X,Y)^*\cong \Hom_{\mathcal C}(Y, S(X))$$
for all $X,Y\in {\mathcal C}$. A (pre-)triangulated $\Hom$-finite 
category ${\mathcal C}$ with Serre functor $S$ is called 
{\it $n$-Calabi-Yau} if there is a natural isomorphism
$$S\cong \Sigma^n.$$
(In \cite[Section 2.6]{Ke2} it is called {\it weakly $n$-Calabi-Yau}.)
We now prove Proposition \ref{xxpro0.6}.

\begin{proposition}[Serre duality]
\label{xxpro3.6} 
Let ${\mathcal C}$ be a $\Hom$-finite category with Serre functor $S$.
Let $\sigma$ be an endofunctor of ${\mathcal C}$. 
\begin{enumerate}
\item[(1)]
If $\sigma$ has a right adjoint $\sigma^!$, then 
$$\fpdim (\sigma)=\fpdim(\sigma^! \circ S).$$
\item[(2)]
If $\sigma$ is an equivalence with quasi-inverse 
$\sigma^{-1}$, then 
$$\fpdim (\sigma)=\fpdim(\sigma^{-1} \circ S).$$
\item[(3)]
If ${\mathcal C}$ is {\rm{(}}pre-{\rm{)}}triangulated and $n$-Calabi-Yau, 
then we have a duality
$$\fpdim (\Sigma^i)=\fpdim (\Sigma^{n-i})$$
for all $i$.
\end{enumerate}
\end{proposition}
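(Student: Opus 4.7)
The plan is to prove part (1) by showing that the adjacency matrices for $\sigma$ and for $\sigma^!\circ S$ on any testing set are transposes of one another, from which parts (2) and (3) will follow as direct specializations.

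For part (1), fix a brick set $\phi=\{X_1,\dots,X_n\}\in\Phi_{n,b}({\mathcal C})$ (or atomic set in the pre-triangulated case). Note that the brick/atomic condition involves only $\Hom_{\mathcal C}(X_i,X_j)$ and (in the pre-triangulated case) $\Hom_{\mathcal C}(X_i,\Sigma^{-i}X_i)$, so the set $\Phi_b$ (or $\Phi_a$) over which we take the supremum does not depend on $\sigma$. Now chain the two natural isomorphisms: first the adjunction
\[
\Hom_{\mathcal C}(\sigma(X_j),S(X_i))\;\cong\;\Hom_{\mathcal C}(X_j,\sigma^!(S(X_i))),
\]
then Serre duality
\[
\Hom_{\mathcal C}(X_i,\sigma(X_j))^{\ast}\;\cong\;\Hom_{\mathcal C}(\sigma(X_j),S(X_i)).
\]
Taking $\Bbbk$-dimensions and using that $\dim V=\dim V^{\ast}$ for finite-dimensional $V$, I obtain
\[
\dim\Hom_{\mathcal C}(X_i,\sigma(X_j))\;=\;\dim\Hom_{\mathcal C}(X_j,(\sigma^!\circ S)(X_i)).
\]
Writing $A=A(\phi,\sigma)=(a_{ij})$ and $B=A(\phi,\sigma^!\circ S)=(b_{ij})$, this says $a_{ij}=b_{ji}$, so $B=A^T$. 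Since the spectral radius is invariant under transpose (for finite matrices, and also for the extended $\rho$ in Definition 1.2 since the replacement procedure commutes with transposition), $\rho(A)=\rho(B)$. Taking the supremum over all $\phi\in\Phi_b$ (or $\Phi_a$) gives $\fpdim(\sigma)=\fpdim(\sigma^!\circ S)$.

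Part (2) is immediate: when $\sigma$ is an equivalence with quasi-inverse $\sigma^{-1}$, the quasi-inverse serves simultaneously as a left and right adjoint, so $\sigma^!\cong\sigma^{-1}$ and part (1) applies. For part (3), the suspension $\Sigma$ of a (pre-)triangulated category is always an equivalence, hence so is $\Sigma^i$ with quasi-inverse $\Sigma^{-i}$. Applying part (2) with $\sigma=\Sigma^i$ and using the $n$-Calabi-Yau hypothesis $S\cong\Sigma^n$ yields
\[
\fpdim(\Sigma^i)\;=\;\fpdim(\Sigma^{-i}\circ S)\;=\;\fpdim(\Sigma^{-i}\circ\Sigma^n)\;=\;\fpdim(\Sigma^{n-i}).
\]

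The only delicate point I anticipate is the bookkeeping that the brick/atomic testing class $\Phi_b$ (resp.\ $\Phi_a$) is indeed intrinsic to ${\mathcal C}$ and unchanged when we replace $\sigma$ by $\sigma^!\circ S$; once this is observed, the argument reduces to a one-line manipulation with adjunction, Serre duality, and the transpose-invariance of $\rho$. I would emphasize this carefully, since the whole machinery of Definition 2.3 tacitly depends on testing against the same sets on both sides of the asserted equality.
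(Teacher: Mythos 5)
Your argument is correct and is essentially the paper's own proof: both establish that $A(\phi,\sigma^!\circ S)$ is the transpose of $A(\phi,\sigma)$ via Serre duality combined with the adjunction, use transpose-invariance of $\rho$, and then deduce (2) and (3) by taking $\sigma^!=\sigma^{-1}$ and $S\cong\Sigma^n$. Your extra remarks on the testing class being independent of the functor and on $\rho$ commuting with transposition in the extended sense are harmless elaborations of points the paper leaves implicit.
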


\begin{proof} (1) Let $\phi=\{X_1,\cdots,X_n\}\in \Phi_{n,b}$ and let 
$A(\phi,\sigma)$ be the adjacency matrix
with $(i,j)$-entry $a_{ij}=\dim(X_i, \sigma(X_j))$. By Serre duality,
$$a_{ij}=\dim(X_i,\sigma(X_j))=\dim (\sigma(X_j), S(X_i))
=\dim (X_j, (\sigma^{!} \circ S)(X_i)),$$
which is the $(j,i)$-entry of the adjacency matrix $A(\phi, \sigma^{!} \circ S)$.
Then $\rho(A(\phi, \sigma))=\rho(A(\phi, \sigma^{!} \circ S))$. It follows
from the definition that $\fpdim^n(\sigma)=\fpdim^n(\sigma^{!} \circ S)$
for all $n\geq 1$. The assertion follows from the definition.

(2,3) These are consequences of part (1).
\end{proof}

\subsection{Opposite categories}
\label{xxsec3.4}

\begin{lemma} 
\label{xxlem3.7}
Let $\sigma$ be an endofunctor of ${\mathcal C}$ and suppose that $\sigma$ 
has a left adjoint $\sigma^*$. Consider $\sigma^*$ as an endofunctor 
of the opposite category  ${\mathcal C}^{op}$ of ${\mathcal C}$. Then 
$$\fpdim^n (\sigma\mid_{\mathcal C})=\fpdim^n (\sigma^*\mid_{{\mathcal C}^{op}}).$$
for all $n$.
\end{lemma}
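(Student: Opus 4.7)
The plan is to exploit the adjunction together with the definition of the opposite category to show that, for any brick set $\phi = \{X_1,\dots,X_n\}$ of $\mathcal{C}$, the adjacency matrices $A(\phi,\sigma)$ (computed in $\mathcal{C}$) and $A(\phi,\sigma^*)$ (computed in $\mathcal{C}^{op}$) are transposes of one another, and then invoke the fact that a square matrix and its transpose share the same spectral radius.

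First I would verify that the notion of brick set is self-dual: the underlying set of objects of $\mathcal{C}^{op}$ is the same as that of $\mathcal{C}$, and
\[
\dim \Hom_{\mathcal{C}^{op}}(X_i,X_j) = \dim \Hom_{\mathcal{C}}(X_j,X_i),
\]
so the conditions $\Hom(X_i,X_i) = \Bbbk$ and $\dim\Hom(X_i,X_j) = \delta_{ij}$ are each symmetric in $i,j$. Consequently, $\phi \in \Phi_{n,b}(\mathcal{C})$ if and only if $\phi \in \Phi_{n,b}(\mathcal{C}^{op})$, so the supremum defining $\fpdim^n$ is taken over the same family of sets on both sides.

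Next, for such a $\phi$, I would compute the entries of the adjacency matrix of $(\phi,\sigma^*)$ in $\mathcal{C}^{op}$ using the adjunction $\sigma^* \dashv \sigma$:
\[
\dim \Hom_{\mathcal{C}^{op}}(X_i, \sigma^*(X_j)) = \dim \Hom_{\mathcal{C}}(\sigma^*(X_j), X_i) = \dim \Hom_{\mathcal{C}}(X_j, \sigma(X_i)),
\]
which is exactly the $(j,i)$-entry of $A(\phi,\sigma)$. Hence $A(\phi,\sigma^*\mid_{\mathcal{C}^{op}}) = A(\phi,\sigma\mid_{\mathcal{C}})^{T}$. Since a matrix and its transpose have the same eigenvalues (hence the same spectral radius), and since the extension of $\rho$ in Definition 1.2 is defined via a limit of spectral radii of finite-entry perturbations which is likewise preserved by transposition, we get $\rho(A(\phi,\sigma)) = \rho(A(\phi,\sigma^*))$. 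Taking the supremum over all $\phi \in \Phi_{n,b}$ yields $\fpdim^n(\sigma\mid_{\mathcal{C}}) = \fpdim^n(\sigma^*\mid_{\mathcal{C}^{op}})$, as required.

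There is no serious obstacle here; the only mild point of care is to match up the brick-set testing families on the two sides and to notice that the adjunction swaps rows with columns rather than preserving them. If one wanted to state the identity uniformly for the whole fp-series $\FP(u,t,\sigma)$, one would have to worry about whether $\sigma^n$ has a left adjoint (namely $(\sigma^*)^n$), but the present statement only concerns $\sigma$ itself, so this issue does not arise.
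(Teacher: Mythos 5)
Your argument is correct and is essentially identical to the paper's own proof: both use the adjunction $\sigma^*\dashv\sigma$ to show the adjacency matrix of $(\phi,\sigma^*)$ in ${\mathcal C}^{op}$ is the transpose of that of $(\phi,\sigma)$ in ${\mathcal C}$, note that brick sets are self-dual, and conclude via invariance of the spectral radius under transposition. Your extra remark that transposition also commutes with the extended definition of $\rho$ in Definition 1.2 is a nice point of care the paper leaves implicit.
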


\begin{proof} Let $\phi:=\{X_1,\cdots,X_n\}$ be a brick subset of ${\mathcal C}$
(which is also a brick subset of ${\mathcal C}^{op}$). Then 
$$
\dim_{\mathcal C}(X_i, \sigma(X_j)) =\dim_{\mathcal C}(\sigma^*(X_i), X_j)
=\dim_{{\mathcal C}^{op}} (X_j, \sigma^*(X_i)),
$$
which implies that the adjacency matrix of $\sigma^*$ as an endofunctor of 
${\mathcal C}^{op}$ is the transpose of the adjacency matrix of $\sigma$. 
The assertion follows.
\end{proof}

\begin{definition}
\label{xxdef3.8} 
\begin{enumerate}
\item[(1)]
Two pre-triangulated categories $({\mathcal T}_i, \Sigma_i)$, 
for $i=1,2$, are called 
{\it fp-equivalent} if 
\begin{equation}
\notag
\fpdim^n(\Sigma_1^m)=\fpdim^n(\Sigma_2^m)
\end{equation}
for all $n\geq 1,m\in {\mathbb Z}$.
\item[(2)]
Two algebras are {\it fp-equivalent} if their bounded derived categories
of finitely generated modules are fp-equivalent.
\item[(3)]
Two pre-triangulated categories with Serre functors 
$({\mathcal T}_i, \Sigma_i, S_i)$, for $i=1,2$, are called 
{\it fp-$S$-equivalent} if 
\begin{equation}
\notag
\fpdim^n(\Sigma_1^m\circ S_1^k )=\fpdim^n(\Sigma_2^m\circ S_2^k)
\end{equation}
for all $n\geq 1,m,k\in {\mathbb Z}$.
\end{enumerate}
\end{definition}

\begin{proposition}
\label{xxpro3.9}
Let ${\mathcal T}$ be a pre-triangulated category.
\begin{enumerate}
\item[(1)]
${\mathcal T}$ and ${\mathcal T}^{op}$ are fp-equivalent.
\item[(2)]
Suppose $S$ is a Serre functor of ${\mathcal T}$. Then $({\mathcal T}, S)$ 
and  $({\mathcal T}^{op}, S^{op})$ are fp-$S$-equivalent.
\end{enumerate}
\end{proposition}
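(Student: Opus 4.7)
The plan is to derive both parts from Lemma~\ref{xxlem3.7}, after making two standard identifications for the opposite category. First, the opposite of a pre-triangulated category $({\mathcal T},\Sigma)$ is itself pre-triangulated with suspension $\Sigma^{op}\cong \Sigma^{-1}$. Second, if $S$ is a Serre functor on ${\mathcal T}$, then the Serre functor of ${\mathcal T}^{op}$ is $S^{op}\cong S^{-1}$; this follows by taking the defining isomorphism $\Hom_{\mathcal T}(X,Y)^*\cong \Hom_{\mathcal T}(Y,S(X))$ and swapping $X$ and $Y$, which exhibits $S^{-1}$ as a left adjoint to $S$ and hence as the Serre functor on ${\mathcal T}^{op}$. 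A preliminary observation is that brick and atomic subsets of ${\mathcal T}$ coincide with those of ${\mathcal T}^{op}$, since the brick condition is self-dual and the atomicity condition $\Hom(M,\Sigma^{-i}M)=0$ for $i>0$ remains the same condition in ${\mathcal T}^{op}$ once the suspension is replaced by $\Sigma^{-1}$ and Hom-arguments are reversed.

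For part (1), I would note that $\Sigma$ is an auto-equivalence, so $\Sigma^m$ has left adjoint $\Sigma^{-m}$. Applying Lemma~\ref{xxlem3.7} yields
\[
\fpdim^n(\Sigma^m\mid_{{\mathcal T}}) \;=\; \fpdim^n(\Sigma^{-m}\mid_{{\mathcal T}^{op}}) \;=\; \fpdim^n((\Sigma^{op})^m\mid_{{\mathcal T}^{op}})
\]
for every $n\geq 1$ and $m\in{\mathbb Z}$, which is exactly the condition of Definition~\ref{xxdef3.8}(1).

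For part (2), I would use that a Serre functor commutes with the suspension up to natural isomorphism (since $S$ is a triangulated auto-equivalence). Consequently $\Sigma^m\circ S^k$ is an auto-equivalence of ${\mathcal T}$ with left adjoint $S^{-k}\circ\Sigma^{-m}\cong \Sigma^{-m}\circ S^{-k}$, and a second application of Lemma~\ref{xxlem3.7} gives
\[
\fpdim^n(\Sigma^m\circ S^k\mid_{{\mathcal T}}) \;=\; \fpdim^n((\Sigma^{op})^m\circ(S^{op})^k\mid_{{\mathcal T}^{op}})
\]
for every $n\geq 1$ and $m,k\in{\mathbb Z}$, establishing fp-$S$-equivalence in the sense of Definition~\ref{xxdef3.8}(3). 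The main obstacle is no more than careful bookkeeping: once $\Sigma^{op}\cong\Sigma^{-1}$, $S^{op}\cong S^{-1}$, and $\Sigma S\cong S\Sigma$ are recorded, Lemma~\ref{xxlem3.7} supplies the content of both parts.
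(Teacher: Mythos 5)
Your proof is correct and follows essentially the same route as the paper: both reduce the statement to Lemma \ref{xxlem3.7} via the identifications $\Sigma^{op}\cong\Sigma^{-1}=\Sigma^*$ and $S^{op}\cong S^{-1}=S^*$. You supply two details the paper leaves implicit (that atomic sets of ${\mathcal T}$ and ${\mathcal T}^{op}$ coincide, and that $\Sigma S\cong S\Sigma$ is needed to rewrite the left adjoint of $\Sigma^m\circ S^k$ in the required order), both of which are correct and worth recording.
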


\begin{proof} (1) Let $\Sigma$ be the suspension of ${\mathcal T}$. Then 
${\mathcal T}^{op}$ is also pre-triangulated with suspension functor 
being $\Sigma^{-1}=\Sigma^*$ (restricted to ${\mathcal T}^{op}$).
The assertion follows from Lemma \ref{xxlem3.7}.

(2) Note that the Serre functor of ${\mathcal T}^{op}$ is equal to $S^{-1}=S^{*}$
(restricted to ${\mathcal T}^{op}$). The assertion follows by Lemma \ref{xxlem3.7}.
\end{proof}

\begin{corollary}
\label{xxcor3.10}
Let $A$ be a finite dimensional algebra. 
\begin{enumerate}
\item[(1)]
$A$ and $A^{op}$ are fp-equivalent. 
\item[(2)]
Suppose $A$ has finite global dimension. In this case, the bounded derived
category of finite dimensional $A$-modules has a Serre functor.
Then $A$ and $A^{op}$ are fp-$S$-equivalent.
\end{enumerate}
\end{corollary}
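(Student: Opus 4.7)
The plan is to deduce both parts of Corollary \ref{xxcor3.10} directly from Proposition \ref{xxpro3.9} by identifying the opposite of the bounded derived category of $\Mod_{f.d.}-A$ with the bounded derived category of $\Mod_{f.d.}-A^{op}$.

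First, for part (1), I would set $\mathcal{T} := D^b(\Mod_{f.d.}-A)$ and recall that the $\Bbbk$-linear duality $D := \Hom_\Bbbk(-,\Bbbk)$ induces a contravariant equivalence between $\Mod_{f.d.}-A$ and $\Mod_{f.d.}-A^{op}$, that is, a covariant equivalence $(\Mod_{f.d.}-A)^{op} \simeq \Mod_{f.d.}-A^{op}$. Since $D$ is exact on finite dimensional modules, it extends to a triangulated equivalence
\[
D^b(\Mod_{f.d.}-A)^{op} \;\simeq\; D^b(\Mod_{f.d.}-A^{op}),
\]
intertwining the shift $\Sigma^{-1}$ on the left with the shift $\Sigma$ on the right. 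By Lemma \ref{xxlem3.1}, fp-equivalence is an invariant of the pair (category, endofunctor) under such equivalences, so the fp-series of $\mathcal{T}^{op}$ and of $D^b(\Mod_{f.d.}-A^{op})$ coincide. Now Proposition \ref{xxpro3.9}(1) asserts that $\mathcal{T}$ and $\mathcal{T}^{op}$ are fp-equivalent. Chaining these two facts yields that $A$ and $A^{op}$ are fp-equivalent in the sense of Definition \ref{xxdef3.8}(2).

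For part (2), the key additional input is that when $\gldim A < \infty$, the category $D^b(\Mod_{f.d.}-A)$ is $\Hom$-finite and admits a Serre functor (one may take $S = - \otimes^{L}_A D(A)$, the derived Nakayama functor). The duality $D$ sends this Serre functor on $\mathcal{T}$ to the Serre functor $S^{op}$ of $\mathcal{T}^{op}$, and under the equivalence $\mathcal{T}^{op} \simeq D^b(\Mod_{f.d.}-A^{op})$ this matches the Serre functor of the latter (which exists because $\gldim A^{op} = \gldim A < \infty$). Applying Proposition \ref{xxpro3.9}(2) then gives an fp-$S$-equivalence between $\mathcal{T}$ and $\mathcal{T}^{op}$, and transporting along the duality yields the fp-$S$-equivalence between the derived categories of $A$ and $A^{op}$.

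The only genuinely nontrivial step is the compatibility of $\Bbbk$-linear duality with the Serre functor on both sides: once one accepts the standard identification of the Serre functor with the derived Nakayama functor and notes that $D$ exchanges left and right module structures in the expected way, everything else is formal. No new estimate on $\rho$ or combinatorics of brick sets is required beyond what Proposition \ref{xxpro3.9} already provides.
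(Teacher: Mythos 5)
Your proof is correct and follows essentially the same route as the paper: the $\Bbbk$-linear dual gives a triangulated equivalence $D^b(\Mod_{f.d.}-A)^{op}\simeq D^b(\Mod_{f.d.}-A^{op})$, after which both parts follow from Proposition \ref{xxpro3.9}. The paper's own proof is just a terser version of this; your extra details (invoking Lemma \ref{xxlem3.1} and identifying the Serre functor with the derived Nakayama functor) are correct fillings-in, not a different argument.
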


\begin{proof} (1) Since $A$ is finite dimensional, the
$\Bbbk$-linear dual induces an equivalence of triangulated categories 
between
$D^b(\Mod_{f.d.}-A)^{op}$ and $D^b(\Mod_{f.d.}-A^{op})$. The assertion
follows from Proposition \ref{xxpro3.9}(1).

The proof of (2) is similar by using Proposition \ref{xxpro3.9}(2) instead.
\end{proof}

There are examples where ${\mathcal T}$ and ${\mathcal T}^{op}$ 
are not triangulated equivalent, see Example \ref{xxex3.12}.
In this paper, a $\Bbbk$-algebra $A$ is called {\it local} if $A$ 
has a unique maximal ideal $\fm$ and $A/\fm\cong \Bbbk$. The 
following lemma is easy and well-known.

\begin{lemma}
\label{xxlem3.11}
Let $A$ be a finite dimensional local algebra over $\Bbbk$. Let 
${\mathfrak A}$ be the category $\Mod_{f.d.}-A$ and ${\mathcal A}$
be $D^b({\mathfrak A})$. 
\begin{enumerate}
\item[(1)]
Let $X$ be an object in ${\mathcal A}$ such that
$\Hom_{\mathcal A}(X,X[-i])=0$ for all $i>0$. Then 
$X$ is of the form $M[n]$ where $M$ is an object 
in ${\mathfrak A}$ and $n\in {\mathbb Z}$.
\item[(2)]
Every atomic object in ${\mathcal A}$ is of the form $M[n]$ 
where $M$ is a brick object in ${\mathfrak A}$ and $n\in 
{\mathbb Z}$. Namely, ${\mathfrak A}$ is a-hereditary.
\end{enumerate}
\end{lemma}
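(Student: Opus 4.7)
The plan is to handle (1) first; then (2) follows in one short paragraph. For (1), I will argue by contradiction. Suppose $X \neq 0$ has cohomology supported in $[a,b]$ with $H^a(X), H^b(X) \neq 0$, and assume $a < b$. I will produce a nonzero morphism $f \in \Hom_{\mathcal A}(X, X[a-b])$. Since $a-b = -(b-a)$ with $b-a > 0$, this contradicts the standing hypothesis $\Hom_{\mathcal A}(X, X[-i]) = 0$ for $i>0$, forcing $a = b$. A bounded complex with cohomology concentrated in a single degree is quasi-isomorphic to its cohomology suitably shifted, so $X \cong M[-a]$ with $M := H^a(X) \in {\mathfrak A}$.

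The construction of $f$ is where the local hypothesis on $A$ enters decisively. Because $A$ has a unique simple module $S = A/\fm$, every nonzero finitely generated $A$-module admits both a surjection onto $S$ (its top) and an injection from $S$ (into its socle). Pick $\pi: H^b(X) \twoheadrightarrow S$ and $\eta: S \hookrightarrow H^a(X)$, and let $p: X \to \tau^{\geq b}X \cong H^b(X)[-b]$ and $\iota: H^a(X)[-a] \cong \tau^{\leq a}X \to X$ be the canonical truncation morphisms. Define
$$f := \iota[a-b] \circ \eta[-b] \circ \pi[-b] \circ p : X \longrightarrow X[a-b].$$

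The main obstacle is checking that $f \neq 0$ in the derived category. I plan to do this by applying the cohomology functor $H^b(-)$: the truncation morphisms $p$ and $\iota$ induce identities on $H^b$ and $H^a$ respectively, and the general identity $H^i(g[n]) = H^{i+n}(g)$ for shifted maps pins down $H^b(\iota[a-b]) = H^a(\iota) = \mathrm{id}_{H^a(X)}$. Combining these computations shows $H^b(f)$ is the composition
$$H^b(X) \xrightarrow{\pi} S \xrightarrow{\eta} H^a(X) = H^b(X[a-b]),$$
which is manifestly nonzero since $S$ is simple and the factorization consists of a nonzero surjection followed by a nonzero injection. Hence $f \neq 0$, completing the contradiction.

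For (2), an atomic object $X$ satisfies $\End_{\mathcal A}(X) = \Bbbk$ and the hypothesis of (1), so (1) gives $X \cong M[n]$ with $M \in {\mathfrak A}$ and $n \in \mathbb{Z}$. The full faithfulness of the canonical embedding ${\mathfrak A} \hookrightarrow {\mathcal A}$ yields $\End_{\mathfrak A}(M) \cong \End_{\mathcal A}(M[n]) = \Bbbk$, so $M$ is a brick, and ${\mathfrak A}$ is a-hereditary in the sense of Definition \ref{xxdef3.4}.
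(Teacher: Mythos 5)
Your proposal is correct and follows essentially the same route as the paper: both argue by contradiction from the local hypothesis, using a surjection of $H^{\max}(X)$ onto the unique simple $S$ followed by an injection of $S$ into $H^{\min}(X)$ to manufacture a nonzero morphism $X\to X[a-b]$. The only difference is that you make explicit, via the truncation triangles and the computation of $H^b(f)$, the step the paper leaves as ``induces a nonzero morphism,'' which is a welcome but not essentially new elaboration.
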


\begin{proof} (2) is an immediate consequence of 
part (1). We only prove part (1). 

On the contrary we suppose that $H^m(X)\neq 0$
and $H^n(X)\neq 0$ for some $m<n$. Since $X$ is a bounded
complex, we can take $m$ to be minimum of such integers and 
$n$ to be the maximum of such integers. Since $A$ is local,
there is a nonzero map from $H^n(X)\to H^m(X)$, which 
induces a nonzero morphism in $\Hom_{\mathcal A}(X, X[m-n])$.
This contradicts the hypothesis.
\end{proof}

\begin{example}
\label{xxex3.12}
Let $m,n$ be integers $\geq 2$. Define $A_{m,n}$ to be the algebra
$$\Bbbk\langle x_1, x_2 \rangle/(x_1^{m}, x_2^{n}, x_1x_2).$$
It is easy to see that $A_{m,n}$ is a finite dimensional local
connected graded algebra generated in degree 1 
(with $\deg x_1=\deg x_2=1$). If $A_{m,n}$ is isomorphic to
$A_{m',n'}$ as algebras, by \cite[Theorem 1]{BZ}, these 
are isomorphic as graded algebras. Suppose $f: A_{m,n}\to A_{m',n'}$ 
is an isomorphism of graded algebras and write 
$$ f(x_1)=a x_1+bx_2, \quad f(x_2)= c x_1+ dx_2.$$
Then the relation $f(x_1)f(x_2)=0$ forces $b=c=0$. As a consequence,
$m=m'$ and $n=n'$. So we have proven that
\begin{enumerate}
\item[(1)]
$A_{m,n}$ is isomorphic to $A_{m',n'}$ if and only if $m=m'$
and $n=n'$.
\end{enumerate}

Next we claim that 

\begin{enumerate}
\item[(2)]
if $m\neq n$, then the derived category $D^b(\Mod_{f.d.}-A_{m,n})$ 
is not triangulated equivalent to $D^b(\Mod_{f.d.}-A_{m,n}^{op})$.
\end{enumerate}

Let $m,n,m',n'$ be integers $\geq 2$.
Suppose that $D^b(\Mod_{f.d.}-A_{m,n})$ is triangulated equivalent 
to $D^b(\Mod_{f.d.}-A_{m',n'})$. Since $A_{m,n}$ is local, by 
\cite[Theorem 2.3]{Ye}, every 
tilting complex over $A_{m,n}$ is of the form $P[n]$ where $P$ is a 
progenerator over $A_{m,n}$. As a consequence, $A_{m,n}$ is 
Morita equivalent to $A_{m',n'}$. Since both 
$A_{m,n}$ and  $A_{m',n'}$ are local, Morita equivalence implies
that $A_{m,n}$ is isomorphic to $A_{m',n'}$. By part (1), $m=m'$
and $n=n'$. In other words, if $(m,n)\neq (m',n')$, then 
$D^b(\Mod_{f.d.}-A_{m,n})$ is not triangulated equivalent 
to $D^b(\Mod_{f.d.}-A_{m',n'})$. As a consequence, 
if $m\neq n$, then $D^b(\Mod_{f.d.}-A_{m,n})$ is not triangulated 
equivalent to $D^b(\Mod_{f.d.}-A_{n,m})$. By definition, 
$A_{m,n}^{op}\cong A_{n,m}$. Therefore the claim (2) follows.

We can show that $D^b(\Mod_{f.d.}-A)$ is dual to 
$D^b(\Mod_{f.d.}-A^{op})$ by using the $\Bbbk$-linear dual.  
In other words, $D^b(\Mod_{f.d.}-A)^{op}$
is triangulated equivalent to $D^b(\Mod_{f.d.}-A^{op})$.
Therefore the following is a consequence of part (2).

\begin{enumerate}
\item[(3)]
Suppose $m\neq n$ and let ${\mathcal A}$ be $D^b(\Mod_{f.d.}-A_{m,n})$. 
Then ${\mathcal A}$ is not triangulated equivalent to
${\mathcal A}^{op}$. But by Proposition \ref{xxpro3.9}(1), ${\mathcal A}$
and ${\mathcal A}^{op}$ are fp-equivalent.
\end{enumerate}
\end{example}

\section{Derived category over a commutative ring}
\label{xxsec4}

Throughout this section $A$ is a commutative algebra 
and ${\mathcal A}=D^b(\Mod-A)$. (In other sections 
${\mathcal A}$ usually denotes $D^b(\Mod_{f.d.}-A)$.)

\begin{lemma}
\label{xxlem4.1} 
Let $A$ be a commutative algebra. Let $X$ be an atomic
object in ${\mathcal A}$. Then $X$ is of the form 
$M[i]$ for some simple $A$-module $M$ and some $i\in {\mathbb Z}$.
As a consequence, $\Mod-A$ is a-hereditary. 
\end{lemma}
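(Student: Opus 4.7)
The plan is to mirror the strategy of Lemma~\ref{xxlem3.11}, replacing the local-algebra hypothesis by a reduction coming directly from the commutativity of $A$. First, since $X$ is atomic it is in particular a brick, so $\End_{\mathcal A}(X)=\Bbbk$. The canonical action $A\to \End_{\mathcal A}(X)=\Bbbk$ is a $\Bbbk$-algebra map whose kernel is a maximal ideal $\fm$ with $A/\fm\cong \Bbbk$. Every $a\in\fm$ acts as zero on $X$ in $\mathcal A$, and hence as zero on each cohomology module $H^i(X)$. Therefore each $H^i(X)$ is a $\Bbbk$-vector space, viewed as an $A$-module via $A\to A/\fm=\Bbbk$.

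Next I would show that $X$ is concentrated in a single cohomological degree. Suppose for contradiction that $a<b$ are the minimum and maximum degrees with $H^i(X)\neq 0$, and put $M:=H^a(X)$, $N:=H^b(X)$. Both are nonzero $\Bbbk$-vector spaces, so one can choose any nonzero $\Bbbk$-linear map $\phi: N\to M$; by the preceding step this is automatically $A$-linear. Using the canonical truncation morphisms $\alpha: M[-a]\to X$ and $\beta: X\to N[-b]$, form the composition
$$\psi : X \xrightarrow{\;\beta\;} N[-b] \xrightarrow{\;\phi[-b]\;} M[-b] \xrightarrow{\;\alpha[a-b]\;} X[a-b].$$
Since $\beta$ (resp.\ $\alpha$) induces the identity on top (resp.\ bottom) cohomology, $H^b(\psi)=\phi\neq 0$. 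Hence $\psi$ is a nonzero element of $\Hom_{\mathcal A}(X,\Sigma^{-(b-a)}X)$, contradicting the atomicity of $X$.

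It follows that $X\simeq M[n]$ for some $n\in\Z$ and some nonzero $A$-module $M$. Since $X$ is a brick, so is $M$; combined with $\fm M=0$, the equality $\End_A(M)=\Bbbk$ forces $\dim_\Bbbk M=1$, so $M\cong A/\fm$ is a simple $A$-module. The a-hereditary conclusion then follows immediately from Definition~\ref{xxdef3.4}.

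The main obstacle is the middle step: producing a genuinely nonzero negatively-shifted self-map of $X$. The key leverage is the first step, which uses commutativity of $A$ to force the extreme cohomologies $M$ and $N$ to be $\Bbbk$-vector spaces, so that any nonzero $\Bbbk$-linear map $\phi: N\to M$ is automatically $A$-linear and yields the desired element of $\Hom_{\mathcal A}(X,\Sigma^{-(b-a)}X)$.
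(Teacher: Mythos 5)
Your proof is correct and follows essentially the same route as the paper's: use commutativity to get a ring map $A\to\End_{\mathcal A}(X)=\Bbbk$ with maximal kernel $\fm$ killing all cohomology, then derive a contradiction with \eqref{E2.1.1} from a nonzero map $H^{\max}(X)\to H^{\min}(X)$, and finally use the brick condition to cut $M$ down to one copy of $A/\fm$. The only difference is presentational: you construct the negatively-shifted self-map explicitly via truncation triangles, where the paper invokes the standard isomorphism $\Hom_{\mathcal A}(X[n],X[m])\cong\Hom_A(H^n(X),H^m(X))$.
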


\begin{proof} Consider $X$ as a bounded above complex of projective $A$-modules. 
Since $A$ is commutative, every $f\in A$ induces naturally 
a morphism of $X$ by multiplication. For each $i$, $H^i(X)$ is an 
$A$-module. We have natural morphisms of $A$-algebras
$$A\to \Hom_{{\mathcal A}}(X,X)\to \End_A(H^i(X)).$$
By definition, $\Hom_{{\mathcal A}}(X,X)=\Bbbk$. 
Thus $\Hom_{{\mathcal A}}(X,X)=A/\fm$ for some ideal $\fm$ of 
$A$ that has codimension 1. Hence the $A$-action on $H^i(X)$ 
factors through the map $A\to A/\fm$. This means that $H^i(X)$ 
is a direct sum of $A/\fm$. 

Let $n=\sup X$ and $m=\inf X$. Then $H^m(X)=(A/\fm)^{\oplus s}$
and $H^n(X)=(A/\fm)^{\oplus t}$ for some $s,t>0$. If $m<n$, 
then
$$\Hom_{{\mathcal A}}(X, X[m-n])\cong \Hom_{{\mathcal A}}(X[n], X[m])
\cong \Hom_{A}(H^n(X), H^m(X))\neq 0$$
which contradicts \eqref{E2.1.1}. Therefore $m=n$ and $X=M[n]$
for $M:=H^n(X)$. Since $X$ is atomic, $M$ has only one copy of $A/\fm$.
\end{proof}

%

\begin{lemma}
\label{xxlem4.2}
Let $A$ be a commutative algebra. Let $X$ and $Y$ be two 
atomic objects in ${\mathcal A}$.
Then $\Hom_{\mathcal A}(X,Y)\neq 0$ if and only if there is an ideal 
$\fm$ of $A$ of codimension 1 such that $X\cong A/\fm[m]$
and $Y\cong A/\fm[n]$ for some $0\leq n-m\leq \pd A/\fm$.
\end{lemma}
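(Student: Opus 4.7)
The plan is to apply Lemma \ref{xxlem4.1} to reduce $\Hom_{\mathcal A}(X,Y)$ to an $\Ext$-group between simples $A/\fm$, then use the $A$-module structure on $\Ext$ to separate distinct maximal ideals and to control the range of admissible shifts.

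By Lemma \ref{xxlem4.1}, I can write $X \cong (A/\fm_X)[m]$ and $Y \cong (A/\fm_Y)[n]$ for some codimension-$1$ ideals $\fm_X, \fm_Y$ of $A$ (necessarily maximal since $A/\fm \cong \Bbbk$) and integers $m, n$. Then
\[
\Hom_{\mathcal A}(X, Y) \;\cong\; \Ext^{n-m}_A(A/\fm_X, A/\fm_Y),
\]
which vanishes automatically unless $0 \leq n - m \leq \pd(A/\fm_X)$.

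For the forward direction, assume $\Hom_{\mathcal A}(X,Y)\neq 0$. The inequalities on $n-m$ are then immediate from the vanishing of negative $\Ext$ and of $\Ext$ above the projective dimension, so the substantive claim is $\fm_X=\fm_Y$. If instead $\fm_X\neq \fm_Y$, I would choose $f\in\fm_X\setminus\fm_Y$; then multiplication by $f$ is the zero endomorphism of $A/\fm_X$ but a unit on $A/\fm_Y\cong\Bbbk$. Because $A$ is commutative, the source and target $A$-actions on $\Ext^{n-m}_A(A/\fm_X,A/\fm_Y)$ coincide, so this group is simultaneously killed by $f$ and acted on by $f$ as an isomorphism---hence zero, a contradiction.

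For the backward direction, with $\fm_X=\fm_Y=\fm$ and $0\leq n-m\leq \pd(A/\fm)$, the case $n=m$ is $\Hom_A(A/\fm,A/\fm)=\Bbbk\neq 0$. In general I would localize at $\fm$ to reduce to the local Noetherian case and take a minimal free resolution of $A_\fm/\fm A_\fm$; after applying $\Hom_{A_\fm}(-,A_\fm/\fm A_\fm)$, minimality forces every differential to vanish, so $\Ext^i_{A_\fm}(A_\fm/\fm A_\fm, A_\fm/\fm A_\fm)\neq 0$ for each $0\le i\le\pd(A/\fm)$, and these agree with $\Ext^i_A(A/\fm,A/\fm)$. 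The main obstacle is the source-versus-target action agreement used in the forward direction---a standard but crucial commutative-algebra fact that does all the work of separating distinct maximal ideals---together with a minor technical point that the minimal-resolution argument in the backward direction implicitly uses a Noetherian hypothesis on $A$.
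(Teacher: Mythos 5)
Your proof is correct and follows the same route as the paper: reduce via Lemma \ref{xxlem4.1} to $\Ext^{n-m}_A(A/\fm_X,A/\fm_Y)$, rule out $\fm_X\neq\fm_Y$, and bound $n-m$ by the projective dimension. The paper dismisses the distinct-ideal case with ``clearly'' and leaves the converse as ``similar''; you supply the standard justifications (the coinciding source/target $A$-actions on $\Ext$, and nonvanishing of $\Ext^i(A/\fm,A/\fm)$ for $0\le i\le \pd A/\fm$ via a minimal resolution), which is exactly what is needed.
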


\begin{proof} By Lemma \ref{xxlem4.1}, $X\cong A/\fm[m]$ for some 
ideal $\fm$ of codimension 1 and some integer $m$. Similarly, 
$Y\cong A/\fn[n]$ for ideal $\fn$ of codimension 1 and integer $n$. 

Suppose $\Hom_{\mathcal A}(X,Y)\neq 0$. 
If $\fm\neq \fn$, then clearly $\Hom_{\mathcal A}(X,Y)=0$. Hence $\fm=\fn$.
Further, $\Ext^{n-m}_{A}(A/\fm,A/\fm)\cong \Hom_{\mathcal A}(X,Y)\neq 0$
implies that $0\leq n-m\leq \pd A/\fm$. The converse can be proved in a similar
way.
\end{proof}

If $A$ is an affine commutative ring over $\Bbbk$, then 
every simple $A$-module is $1$-dimensional. Hence 
$(A/\fm)[i]$ is a brick (and atomic) object in ${\mathcal A}$
for every $i\in {\mathbb Z}$ and every maximal ideal $\fm$ of $A$.
The fp-global dimension $\fpgldim ({\mathcal A})$ is defined 
in Definition \ref{xxdef2.7}(2).

\begin{proposition}
\label{xxpro4.3}
Let $A$ be an affine commutative domain of global dimension
$g<\infty$. 
\begin{enumerate}
\item[(1)]
$\fpdim ({\mathcal A})=g$.
\item[(2)]
$\fpdim (\Sigma^i)={g \choose i}$ for all $i$.
\item[(3)]
$\fpgldim ({\mathcal A})=g$.
\end{enumerate}
\end{proposition}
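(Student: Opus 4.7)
The plan is to compute $\fpdim(\Sigma^i)$ for every $i\in{\mathbb Z}$ and show it equals $\binom{g}{i}$, with the convention $\binom{g}{i}=0$ for $i<0$ or $i>g$. Assertions (1) and (3) then follow immediately, since $\fpdim({\mathcal A})=\fpdim(\Sigma)=\binom{g}{1}=g$ and $\fpgldim({\mathcal A})=\sup\{n\mid \binom{g}{n}\neq 0\}=g$.

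First I would assemble the necessary commutative algebra. Since $A$ is commutative of finite global dimension, $A$ is regular by Auslander-Buchsbaum-Serre, so $\pd_A(A/\fm)=\operatorname{ht}(\fm)$ for every maximal ideal $\fm$. For an affine domain over an algebraically closed field, all maximal ideals have height equal to $\dim A=g$. Localizing at $\fm$ and applying the Koszul resolution of the residue field over the regular local ring $A_\fm$ of dimension $g$, one obtains
$$\dim_{\Bbbk}\Ext^j_A(A/\fm,A/\fm)=\binom{g}{j}$$
for every maximal ideal $\fm$ and every $j\in{\mathbb Z}$.

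Next I would use Lemmas \ref{xxlem4.1} and \ref{xxlem4.2} to classify atomic sets. Every atomic object in ${\mathcal A}$ has the form $(A/\fm)[d]$, and two distinct atomic objects $(A/\fm_j)[d_j]$ and $(A/\fm_k)[d_k]$ are mutually orthogonal iff either $\fm_j\neq\fm_k$, or $\fm_j=\fm_k$ and $|d_j-d_k|>g$ (so that $\binom{g}{d_k-d_j}=0$ in both orderings). Hence every atomic set $\phi$ decomposes as a disjoint union of ``blocks'' indexed by the distinct maximal ideals appearing, and within each block the integer shifts are separated by more than $g$.

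The main computation is then the following. Since $\Ext^*_A(A/\fm_j,A/\fm_k)=0$ for $\fm_j\neq\fm_k$, the adjacency matrix $A(\phi,\Sigma^i)$ is block-diagonal. Within a single block for a fixed $\fm$, ordering shifts $d_1<\cdots<d_s$ with consecutive gaps exceeding $g$, the $(j,k)$-entry equals $\binom{g}{d_k-d_j+i}$. For $0\leq i\leq g$ the diagonal is $\binom{g}{i}$, while for $k>j$ one has $d_k-d_j+i>g$ and for $k<j$ one has $d_k-d_j+i<0$, so all off-diagonal entries vanish and $\rho=\binom{g}{i}$. For $i<0$ or $i>g$ the diagonal vanishes, and the possibly nonzero off-diagonal entries all lie strictly on one side (below the diagonal for $i>g$, above for $i<0$), so the block is strictly triangular and $\rho=0$. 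Taking the supremum over atomic sets gives $\fpdim(\Sigma^i)=\binom{g}{i}$, and (1), (2), (3) all follow. The main technical point I expect will be the careful index analysis of $d_k-d_j+i$ that produces this dichotomy between a diagonal block and a strictly triangular block; apart from this, the argument reduces to the local Koszul computation and the dimension theorem for affine domains.
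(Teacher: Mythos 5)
Your proposal is correct and follows essentially the same route as the paper: classify atomic objects via Lemmas \ref{xxlem4.1} and \ref{xxlem4.2}, use the vanishing of $\Ext$ between residue fields of distinct maximal ideals to reduce to a block-diagonal matrix, and exploit the gap condition $d_{k+1}-d_k>g$ within each block to make the matrix (strictly) triangular, with the diagonal computed from the Koszul resolution as $\binom{g}{i}$. Your write-up merely supplies more detail than the paper at two points it leaves implicit — the justification of $\dim\Ext^j_A(A/\fm,A/\fm)=\binom{g}{j}$ via regularity, and the explicit index analysis for $i<0$ and $i>g$ in part (2), which the paper dismisses as ``similar''.
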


\begin{proof}
(1) By Lemma \ref{xxlem4.1}, every atomic object is of the
form $M[i]$ for some $M\cong A/\fm$ where $\fm$ is an ideal of
codimension 1, and $i\in {\mathbb Z}$. It is well-known that
\begin{equation}
\label{E4.3.1}\tag{E4.3.1}
\dim \Ext^i_A(A/\fm, A/\fm)={g \choose i}, 
\;\; \forall \;\; i.
\end{equation}
If $\fm_1$ and $\fm_2$ are two different maximal ideals, then 
\begin{equation}
\label{E4.3.2}\tag{E4.3.2}
\Ext^i_A(A/\fm_1, A/\fm_2)=0
\end{equation}
for all $i$. 
Let $\phi$ be an atomic $n$-object subset. We can decompose 
$\phi$ into a disjoint union 
$\phi_{A/\fm_1}\cup \cdots \cup \phi_{A/\fm_s}$ 
where $\phi_{A/\fm}$ consists of objects of the form
$A/\fm [i]$ for $i\in {\mathbb Z}$. It follows from 
\eqref{E4.3.2} that the adjacency matrix
is a block-diagonal matrix. Hence, we only need
to consider the case when $\phi=\phi_{A/\fm}$
after we use the reduction similar to 
the one used in the proof of Theorem \ref{xxthm3.5}.
Let $\phi=\phi_{A/\fm}=\{A/\fm [d_1], \cdots, A/\fm [d_m]\}$ 
where $d_i$ is increasing. By Lemma \ref{xxlem4.2}, we 
have $d_{i+1}-d_{i}>g$, or 
$d_i+g< d_{i+1}$, for all $i=1,\cdots, m-1$.
Under these conditions, the adjacency matrix is lower 
triangular with each diagonal being $g$. Thus 
$\fpdim (\Sigma)=g$.

The proof of (2) is similar. (3) is a consequence of (2).
\end{proof}

Suggested by Theorem \ref{xxthm3.5}, we could introduce some 
secondary invariants as follows. The {\it stabilization index}
of a triangulated category ${\mathcal T}$ is defined to be
$$SI({\mathcal T})=\min \{ n\mid \fpdim^{n'}{\mathcal T}=\fpdim 
{\mathcal T}, \; \forall \; n'\geq n\}.$$
The {\it global stabilization index} of ${\mathcal T}$ is defined to
be 
$$GSI({\mathcal T})=\max\{ SI({\mathcal T}')\mid {\text{for}}\;\;
{\text{all}}\;\; {\text{thick}}\;\; {\text{triangulated}}\;\;
{\text{full}}\;\; {\text{subcategories}}\;\; {\mathcal T}'\subseteq
{\mathcal T}\}.$$
It is clear that both stabilization index and global stabilization 
index can be defined for an abelian category.

Similar to Proposition \ref{xxpro4.3}, one can show the following.
Suppose that $A$ is affine.
For every $i$, let
$$d_i:= \sup\{ \dim \Ext^i_{A}(A/\fm,A/\fm)\mid \; \;
{\rm{maximal\; ideals}}\;\; \fm\subseteq A\}.$$

\begin{proposition}
\label{xxpro4.4}
Let $A$ be an affine commutative algebra. 
Then, for each $i$, $\fpdim (\Sigma^i)=d_i<\infty$ 
and $\rho(A(\phi, \Sigma^i))\leq d_i$ for all $\phi\in \Phi_{n,a}$.
As a consequence, for each integer $i$, the following hold.
\begin{enumerate}
\item[(1)]
$\fpdim(\Sigma^i)=\fpdim^1(\Sigma^i)$. Hence the stabilization 
index of ${\mathcal A}$ is 1.
\item[(2)]
$\fpdim(\Sigma^i)$ is a finite integer.
\end{enumerate}
\end{proposition}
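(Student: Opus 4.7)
The plan is to mirror the proof of Proposition \ref{xxpro4.3}, with the added input of the Auslander--Buchsbaum--Serre theorem to handle maximal ideals at which $A$ is not regular. By Lemma \ref{xxlem4.1}, every atomic object in $\mathcal{A}$ is of the form $A/\fm[d]$ for a maximal ideal $\fm$ and an integer $d$, and by Lemma \ref{xxlem4.2} the $\Hom$-group between two such objects vanishes unless they share the same supporting maximal ideal. So for any $\phi\in \Phi_{n,a}(\mathcal{A})$, grouping by the supporting ideal block-diagonalizes $A(\phi,\Sigma^i)$, and it suffices to bound the spectral radius of one $\fm$-block at a time.

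Fix $\fm$ and list the shifts appearing in that block as $d_1<\cdots<d_m$. The brick condition forces $\dim \Ext^{d_k-d_j}_A(A/\fm,A/\fm)=0$ for $j\neq k$, while the block adjacency matrix $M$ for $\Sigma^i$ has entries $M_{jk}=\dim \Ext^{d_k-d_j+i}_A(A/\fm,A/\fm)$ and diagonal $M_{jj}=\dim \Ext^i_A(A/\fm,A/\fm)\leq d_i$. I then split on whether $A_\fm$ is regular. If $A_\fm$ is regular of local dimension $n_\fm$, then $\dim \Ext^l_A(A/\fm,A/\fm)=\binom{n_\fm}{l}$ vanishes exactly outside $[0,n_\fm]$, so the brick condition forces $|d_j-d_k|>n_\fm$ for $j\neq k$; a case analysis on the sign of the exponent $d_k-d_j+i$ then shows $M$ is either diagonal with constant entry $\binom{n_\fm}{i}$ when $0\leq i\leq n_\fm$, or strictly triangular with zero diagonal (hence nilpotent) when $i\notin [0,n_\fm]$, so in either case $\rho(M)=\dim \Ext^i_A(A/\fm,A/\fm)\leq d_i$. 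If $A_\fm$ is not regular, Auslander--Buchsbaum--Serre implies that the minimal free resolution of the residue field over $A_\fm$ never terminates, so $\dim \Ext^l_A(A/\fm,A/\fm)\neq 0$ for every $l\geq 0$; the brick condition then excludes any two distinct shifts, forcing $m=1$, and $M$ collapses to the scalar $\dim \Ext^i_A(A/\fm,A/\fm)\leq d_i$. Combining blocks yields $\rho(A(\phi,\Sigma^i))\leq d_i$ for every atomic $\phi$, so $\fpdim(\Sigma^i)\leq d_i$.

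The reverse inequality is realized by singleton brick sets: for each maximal $\fm$, $\phi=\{A/\fm\}\in \Phi_{1,a}(\mathcal{A})$ (since $\End_A(A/\fm)=\Bbbk$ and negative $\Ext$ vanishes), and its adjacency matrix for $\Sigma^i$ is the scalar $\dim \Ext^i_A(A/\fm,A/\fm)$, so taking the supremum over $\fm$ gives $\fpdim^1(\Sigma^i)\geq d_i$. Sandwiching $d_i\leq \fpdim^1(\Sigma^i)\leq \fpdim(\Sigma^i)\leq d_i$ yields $\fpdim^1(\Sigma^i)=\fpdim(\Sigma^i)=d_i$, which is consequence (1) together with stabilization index $1$. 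The main remaining obstacle is the finiteness $d_i<\infty$, which I would handle via Noether normalization: presenting $A$ as a module-finite extension of $\Bbbk[x_1,\ldots,x_N]$ gives a uniform bound on the embedding dimensions $\dim_{\Bbbk}\fm/\fm^2$ and, combined with a constructibility/upper semicontinuity argument for Betti numbers over $\Spec A$, yields a uniform bound on $\dim \Ext^i_A(A/\fm,A/\fm)$ across all maximal $\fm$; this shows $d_i$ is a finite non-negative integer, giving (2).
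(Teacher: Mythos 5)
The paper itself gives no proof of this proposition beyond the remark that it is ``similar to Proposition \ref{xxpro4.3}'', so your write-up is in effect supplying the details the paper omits. The core of your argument is correct and identifies exactly the point where ``similar'' needs elaboration: after block-diagonalizing by supporting maximal ideal (Lemmas \ref{xxlem4.1} and \ref{xxlem4.2}), the regular points behave as in Proposition \ref{xxpro4.3}, while at a non-regular point the Auslander--Buchsbaum--Serre theorem forces every Betti number of the residue field to be positive, so the brick condition collapses that block to a single object and the block contributes only the scalar $\dim\Ext^i_A(A/\fm,A/\fm)$. Together with the singleton lower bound this gives $\fpdim^1(\Sigma^i)=\fpdim(\Sigma^i)=d_i$ cleanly. (One small caveat on the stated consequence: ``stabilization index $1$'' requires $\fpdim^{n}(\Sigma)=\fpdim(\Sigma)$ for \emph{every} $n\geq 1$, which one gets by padding a near-optimal singleton with objects supported at other maximal ideals; this padding is unavailable when $A$ is Artinian with few maximal ideals, e.g.\ $A=\Bbbk[x]/(x^2)$, where $\Phi_{n,a}$ is empty for $n\geq 2$. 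That is a defect of the proposition as stated rather than of your argument, but you should not claim the stabilization statement follows from $\fpdim=\fpdim^1$ alone.)

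The genuine gap is the finiteness $d_i<\infty$, which you only sketch, and the sketch as written does not go through. Noether normalization exhibits $A$ as a \emph{module-finite extension} of a polynomial ring, which does not control $\dim_\Bbbk \fm/\fm^2$; what you want is a presentation $A=\Bbbk[x_1,\dots,x_N]/I$, which bounds every embedding dimension by $N$. More seriously, ``upper semicontinuity/constructibility of the Betti numbers $\fm\mapsto\dim\Ext^i_{A_\fm}(\kappa,\kappa)$'' is itself a nontrivial assertion (both arguments of the Ext vary with $\fm$), and no bound on $b_1$ alone controls $b_i$ for $i\geq 2$. A correct route: for a closed point $p$, the Koszul complex $K(p)$ over $A$ on $x_1-p_1,\dots,x_N-p_N$ has $H_j(K(p))\cong\Tor_j^{\Bbbk[x_1,\dots,x_N]}(A,\kappa(p))$, whose dimension is bounded uniformly in $p$ by the ranks in a finite free resolution of $A$ over the polynomial ring; Serre's coefficientwise bound on the Poincar\'e series of a local ring in terms of its Koszul homology (see the reference \cite{Av} already cited in the paper) then gives, for each fixed $i$, a bound on $\dim\Ext^i_{A_\fm}(\kappa,\kappa)$ independent of $\fm$. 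Without some argument of this kind the statement $d_i<\infty$ — and hence consequence (2) — is unproved.
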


\begin{theorem}
\label{xxthm4.5}
Let $A$ be an affine commutative algebra and ${\mathcal A}$ be $D^b(\Mod−A)$.
Let ${\mathcal T}$ be a triangulated full subcategory of ${\mathcal A}$
with suspension $\Sigma_{\mathcal T}$. Let $i$ be an integer.
\begin{enumerate}
\item[(1)]
$\fpdim(\Sigma^i_{\mathcal T})=\fpdim^1(\Sigma^i_{\mathcal T})$. 
As a consequence, the global stabilization index
of ${\mathcal A}$ is 1.
\item[(2)]
$\fpdim(\Sigma^i_{\mathcal T})$ is a finite integer.
\item[(3)]
If ${\mathcal T}$ is isomorphic to $D^b(\Mod_{f.d.}-B)$ for some
finite dimensional algebra $B$, then $B$ is Morita equivalent to 
a commutative algebra.
\end{enumerate}
\end{theorem}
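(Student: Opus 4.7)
\emph{Plan for (1) and (2).} My plan is to push the bound of Proposition~\ref{xxpro4.4} through the inclusion $\mathcal{T}\hookrightarrow\mathcal{A}$. Since $\mathcal{T}$ is a full triangulated subcategory, $\Sigma_{\mathcal{T}}=\Sigma|_{\mathcal{T}}$ and the Hom-spaces in $\mathcal{T}$ coincide with those of $\mathcal{A}$, so any atomic subset $\phi\subset\mathcal{T}$ is atomic in $\mathcal{A}$ with identical adjacency matrix $A(\phi,\Sigma^i)$. Proposition~\ref{xxpro4.4} then yields $\fpdim^n(\Sigma^i_{\mathcal{T}})\leq d_i<\infty$, a finite integer, establishing~(2). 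For the sharper equality $\fpdim(\Sigma^i_{\mathcal{T}})=\fpdim^1(\Sigma^i_{\mathcal{T}})$ in~(1), I would invoke Lemma~\ref{xxlem4.1} to write every member of $\phi$ as $(A/\fm)[j]$, partition $\phi=\bigsqcup_{\fm}\phi_{\fm}$ by maximal ideal, and observe that the vanishing $\Ext^{\ast}_A(A/\fm,A/\fm')=0$ for $\fm\neq\fm'$ (localize at $\fm'$) makes $A(\phi,\Sigma^i)$ block diagonal. Each $\fm$-block is controlled by the per-$\fm$ bound that is already implicit in the proof of Proposition~\ref{xxpro4.4}, namely $\rho(A(\phi_{\fm},\Sigma^i))\leq \dim \Ext^{i}_A(A/\fm,A/\fm)$, and this last quantity equals the spectral radius of the $1{\times}1$ matrix attached to the singleton $\{(A/\fm)[j_{\fm}]\}\in\mathcal{T}$. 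Maximising over $\fm$-blocks gives $\rho(A(\phi,\Sigma^i_{\mathcal{T}}))\leq \fpdim^1(\Sigma^i_{\mathcal{T}})$. Since this argument applies to every thick triangulated full subcategory of $\mathcal{A}$, it simultaneously gives $SI(\mathcal{T})=1$ and $GSI(\mathcal{A})=1$.

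\emph{Plan for (3): reduction to one simple per maximal ideal.} Fix an equivalence $\mathcal{T}\cong D^b(\Mod_{f.d.}-B)$ and let $S_1,\ldots,S_n$ be the pairwise-orthogonal simple $B$-modules, which form an atomic subset of $\mathcal{T}$. Lemma~\ref{xxlem4.1} presents each as $S_k=(A/\fm_k)[j_k]$ in $\mathcal{A}$. The first observation is that $k\mapsto\fm_k$ is \emph{injective}: if $\fm_k=\fm_l$, then $S_l\cong S_k[j_l-j_k]$ holds already in $\mathcal{A}$ and therefore in $\mathcal{T}$ by fullness, whereas in $D^b(\Mod_{f.d.}-B)$ two distinct simples sit in cohomological degree zero and so cannot be nontrivial shifts of one another. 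The second observation is that for $\fm_k\neq\fm_l$, $\Ext^{\ast}_A(A/\fm_k,A/\fm_l)=0$ forces $\Ext^{\ast}_B(S_k,S_l)=0$, placing $S_k$ and $S_l$ in different blocks of $B$. Combining these, $B$ decomposes up to Morita equivalence as $\prod_{\fm}B_{\fm}$ with each $B_{\fm}$ a local finite-dimensional $\Bbbk$-algebra carrying a single simple $(A/\fm)[j_{\fm}]$.

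\emph{Plan for (3): commutativity of each local factor.} The remaining step is to show each basic local factor $B_{\fm}^{\mathrm{basic}}$ is commutative. My plan is to consider the thick triangulated subcategory $\mathcal{T}_{\fm}\subset\mathcal{T}$ generated by $(A/\fm)[j_{\fm}]$, argue that it inherits the standard $t$-structure from $D^b(\Mod-A)$, and identify its heart with $\Mod_{f.d.}-A/\fm^N$ for some integer $N$. The uniform bound $N$ would come from the finite-dimensionality of $B_{\fm}$: every object of $\Mod_{f.d.}-B_{\fm}$ has length at most $\dim_{\Bbbk}B_{\fm}$, which across the equivalence forces objects in the heart to be annihilated by a common power of $\fm$. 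Once $\mathcal{T}_{\fm}\cong D^b(\Mod-A/\fm^N)$ is in hand, reading the equivalence in the other direction gives $B_{\fm}$ Morita equivalent to $A/\fm^N$, a commutative quotient of $A$. Assembling the factors, $B$ is Morita equivalent to $\prod_{\fm}A/\fm^{N_{\fm}}$, which is commutative.

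\emph{Main obstacle.} The hard part is the last step: one must verify that $\mathcal{T}_{\fm}$ is closed under the canonical truncations of $D^b(\Mod-A)$, so that the ambient $t$-structure genuinely restricts, and then pin the heart down to a single $\Mod_{f.d.}-A/\fm^N$ rather than the ascending union $\varinjlim_N \Mod_{f.d.}-A/\fm^N$. Closure under truncation is not automatic for a full triangulated subcategory, and the cleanest route is probably to transport the natural $t$-structure from $D^b(\Mod_{f.d.}-B_{\fm})$ to $\mathcal{T}_{\fm}$ via the given equivalence and then compare the two hearts as subcategories of $\Mod-A$. A complementary route, based on Koszul-duality or formality for the DG endomorphism algebra $\RHom_A(A/\fm,A/\fm)$, should identify $B_{\fm}^{\mathrm{basic}}$ intrinsically with a commutative quotient of the complete local ring at $\fm$; either way, this is where the bulk of the technical work lies.
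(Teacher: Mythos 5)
Your treatment of parts (1) and (2) is correct and is essentially the paper's argument: atomic sets of $\mathcal{T}$ are atomic in $\mathcal{A}$ because $\mathcal{T}$ is full, Lemma \ref{xxlem4.1} sorts their members by maximal ideal, the adjacency matrix becomes block triangular with each $\fm$-block triangular of diagonal entry $\dim\Ext^i_A(A/\fm,A/\fm)$, and that quantity is realized by a singleton in $\mathcal{T}$. Your reduction of part (3) to local blocks is also sound, and is a mild variant of the paper's route (the paper phrases it as: no arrow can join two distinct vertices of the Gabriel quiver of the basic algebra, because $\Ext^1_B(S_k,S_l)\neq 0$ would force $S_k$ and $S_l$ to be shifts of the same $A/\fm$ by Lemma \ref{xxlem4.2}, which is impossible for two non-isomorphic simples).

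The genuine gap is the last step of (3), and you have flagged it yourself: you never actually prove that the local factor $B_{\fm}$ is commutative. Your plan hinges on restricting the standard $t$-structure of $D^b(\Mod-A)$ to the thick subcategory generated by $A/\fm[j_\fm]$ and identifying its heart with $\Mod_{f.d.}-A/\fm^N$; as you note, closure under truncation is not automatic for a full triangulated subcategory, and neither the $t$-structure comparison nor the alternative formality argument is carried out. The missing idea that makes this step elementary is to apply the embedding $\iota$ to the regular module $B$ itself rather than to the heart. Since $_BB$ has a composition series with all subquotients isomorphic to the unique simple $S$, and $\iota(S)\cong A/\fm$ after a shift, induction on length shows every cohomology module $H^n(\iota(B))$ is a module over the local ring $A/\fm^d$ for $d$ equal to the length of $_BB$. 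Because $\Hom_{\mathcal A}(\iota(B),\iota(B)[-i])=\Hom_{\mathcal T}(B,B[-i])=0$ for $i>0$, the argument of Lemma \ref{xxlem3.11}(1) (a nonzero map from the top cohomology to the bottom cohomology over a local ring would produce a nonzero negative self-extension) forces $\iota(B)$ to be a stalk complex $M[m]$; the existence of nonzero maps $S\to B$ and $B\to S$ pins $m=0$, and locality of $B$ forces $M\cong A/I$ for an ideal $I\supseteq\fm^d$. Then
\begin{equation}
\notag
B^{\op}\cong\End_B(B)\cong\End_{\mathcal A}(A/I)\cong A/I,
\end{equation}
which is commutative, so $B$ is commutative. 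This bypasses all questions about restricting $t$-structures, which is exactly where your proposal stalls.
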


\begin{proof} (1,2) These are similar to Proposition \ref{xxpro4.4}.

(3) Since $B$ is finite dimensional, it is Morita 
equivalent to  a basic algebra. So we can assume $B$ is basic and
show that $B$ is commutative. Write $B$ as a $\Bbbk Q/(R)$ where $Q$
is a finite quiver with admissible ideal $R\subseteq (\Bbbk Q)_{\geq 2}$.
We will show that $B$ is commutative.

First we claim that each connected component of $Q$ consists 
of only one vertex. Suppose not. Then $Q$ contains distinct vertices $v_1$ and $v_2$ with 
an arrow $\alpha: v_1\to v_2$. Let $S_1$ and $S_2$ be the simple modules 
corresponding to $v_1$ and $v_2$ respectively. Then $\{S_1, S_2\}$ is an
atomic set in ${\mathcal T}$. The arrow represents a nonzero 
element in $\Ext^1_{B}(S_1,S_2)$. Hence
$$\Hom_{\mathcal T}(S_1, S_2[1])\cong \Ext^1_{B}(S_1,S_2)\neq 0.$$
By Lemma \ref{xxlem4.2}, $S_1$ is isomorphic to a complex shift of 
$S_2$. But this is impossible. Therefore, the claim holds.

It follows from the claim in the last paragraph that $B=B_1\oplus 
\cdots \oplus B_n$ where each $B_i$ is a finite dimensional local ring
corresponding to a vertex, say $v_i$.
Next we claim that each $B_i$ is commutative. Without loss of generality,
we can assume $B_i=B$.

Now let $\iota$ be the fully faithful embedding from 
$$\iota: {\mathcal T}:=D^b(\Mod_{f.d.}-B)
\longrightarrow {\mathcal A}:=D^b(\Mod-A).$$ 
Let $S$ be the unique simple left $B$-module. Then, by Lemma \ref{xxlem4.1}, 
there is a maximal ideal $\fm$ of $A$ such that $\iota(S)=A/\fm[w]$ for some 
$w\in {\mathbb Z}$. 
After a shift, we might assume that $\iota(S)=A/\fm$. The left $B$-module 
$B$ has a composition series such that each simple subquotient is isomorphic 
to $S$, which implies that, as a left $A$-module, $\iota(B)$ is generated by
$A/\fm$ in ${\mathcal A}$. By induction on the length of
$B$, one sees that, for every $n\in {\mathbb Z}$, $H^n(\iota(B))$ is  
a left $A/\fm^d$-module for some $d\gg 0$ (we can take $d={\rm{length}} (_BB)$). 
Since $\Hom_{\mathcal A}(\iota(B), \iota(B)[-i])=
\Hom_{\mathcal T}(B, B[-i])=0$ for all $i>0$,
the proof of Lemma \ref{xxlem3.11}(2) shows that $\iota(B)
\cong M[m]$ for some left $A/\fm^d$-module $M$ and $m\in {\mathbb Z}$.
Since there are nonzero maps from $S$ to $B$ and from $B$ to $S$, 
we have nonzero maps from $A/\fm$ to $\iota(B)$ and from 
$\iota(B)$ to $A/\fm$. This implies that $m=0$. Since $B$ is local
(and then $B/\fm_B$ is 1-dimensional for the maximal 
ideal $\fm_B$), this forces that $M=A/I$ where $I$ is an 
ideal of $A$ containing $\fm^d$. Finally, 
$$B^{op}=\End_{B}(B)
\cong \End_{\mathcal A}(A/I,A/I)
=\End_{A}(A/I,A/I)\cong A/I$$
which is commutative. Hence $B$ is commutative.
\end{proof}

\section{Examples}
\label{xxsec5}
In this section we give three examples.

\subsection{Frobenius-Perron theory of projective line ${\mathbb P}^1
:=\Proj \Bbbk [t_0,t_1]$}
\label{xxsec5.1}

\begin{example}
\label{xxex5.1} 
Let $coh({\mathbb P}^1)=:{\mathfrak A}$ denote the category of 
coherent sheaves on ${\mathbb P}^1$. It is well-known (and 
follows from \cite[Example 3.18]{BB}) that 

\medskip

\noindent
{\bf Claim 5.1.1:} Every brick object $X$ in ${\mathfrak A}$ 
(namely, satisfying $\Hom_{{\mathbb P}^1}(X,X)=\Bbbk$)  is either
${\mathcal O}(m)$ for some $m\in {\mathbb Z}$ or 
${\mathcal O}_p$ for some $p\in {\mathbb P}^1$.

\medskip

Let $\phi$ be in $\Phi_{n,b}(coh({\mathbb P}^1))$. If $n=1$ 
or $\phi$ is a singleton, then there are two cases: either 
$\phi=\{{\mathcal O}(m)\}$ or $\phi=\{{\mathcal O}_p\}$. Let 
$E^1$ be the functor $\Ext^1_{{\mathbb P}^1}(-,-)$. In the 
first case, $\rho(A(\phi, E^1))=0$ because 
$\Ext^1_{{\mathbb P}^1}({\mathcal O}(m), {\mathcal O}(m))=0$, 
and in the second case, $\rho(A(\phi, E^1))=1$ because 
$\Ext^1_{{\mathbb P}^1}({\mathcal O}_p, {\mathcal O}_p)=1$.

If $|\phi|>1$, then ${\mathcal O}(m)$ can not appear in $\phi$ 
as $\Hom_{{\mathbb P}^1}({\mathcal O}(m),{\mathcal O}(m'))\neq 0$ 
and $\Hom_{{\mathbb P}^1}({\mathcal O}(m), {\mathcal O}_p)\neq 0$
for all $m\leq m'$ and $p\in {\mathbb P}^1$. Hence, 
$\phi$ is a collection of ${\mathcal O}_p$ for finitely many
distinct points $p$'s.
In this case, the adjacency matrix is the identity $n\times n$-matrix 
and $\rho(A(\phi, E^1))=1$. Therefore
\begin{equation}
\label{E5.1.1}\tag{E5.1.1}
\fpdim^n(coh({\mathbb P}^1))=\fpdim(coh({\mathbb P}^1))=1
\end{equation}
for all $n\geq 1$. Since $coh({\mathbb P}^1)$ is hereditary, 
by Theorem \ref{xxthm3.5}(3,4), we obtain that
\begin{equation}
\label{E5.1.2}\tag{E5.1.2}
\fpdim^n(D^b(coh({\mathbb P}^1)))=\fpdim(D^b(coh({\mathbb P}^1)))=1
\end{equation}
for all $n\geq 1$. 

Let $K_2$ be the Kronecker quiver 

\begin{equation}
\label{E5.1.3}\tag{E5.1.3}
\xymatrix{ \bullet\ar@/^1pc/[rr]\ar@/_1pc/[rr]&& \bullet\\}
\end{equation}

\bigskip

By a result of Beilinson \cite{Bei}, the derived category 
$D^b(\Mod_{f.d.}-\Bbbk K_2)$ is triangulated equivalent to 
$D^b(coh({\mathbb P}^1))$. As a consequence, 
\begin{equation}
\label{E5.1.4}\tag{E5.1.4}
\fpdim(D^b(\Mod_{f.d.}-\Bbbk K_2))=\fpdim(D^b(coh({\mathbb P}^1)))=1.
\end{equation}
It is easy to see, or by Theorem \ref{xxthm1.8}(1),
$$\fpdim K_2=0$$
where $\fpdim $ of a quiver is defined in Definition \ref{xxdef1.6}.

This implies that 
\begin{equation}
\label{E5.1.5}\tag{E5.1.5}
\fpdim(D^b(\Mod_{f.d.}-\Bbbk K_2))>\fpdim K_2.
\end{equation}

\medskip

Next we consider some general auto-equivalences of $D^b(coh({\mathbb P}^1))$.
Let 
$$(m): coh({\mathbb P}^1)\to coh({\mathbb P}^1)$$
be the auto-equivalence induced by the shift of degree $m$ of the graded modules 
over $\Bbbk [t_0,t_1]$ and let $\Sigma$ be 
the suspension functor of $D^b(coh({\mathbb P}^1))$. Then the Serre functor 
$S$ of $D^b(coh({\mathbb P}^1))$ is $\Sigma \circ (-2)$. Let $\sigma$ be the 
functor $\Sigma^a \circ (b)$ for 
some $a,b\in {\mathbb Z}$. By Theorem \ref{xxthm3.5}(1),
$$\fpdim^n(\Sigma^a \circ (b))=0, \;\; \forall \; a\neq 0,1.$$
For the rest we consider $a=0$ or $1$. By Theorem \ref{xxthm3.5}(2,3),
we only need to consider $\fpdim$ on $coh({\mathbb P}^1)$.

If $\phi$  is a singleton $\{{\mathcal O}(n)\}$, then the adjacency 
matrix is 
$$A(\phi,\sigma)=\dim ({\mathcal O}, \Sigma^a{\mathcal O}(b))
=\begin{cases} 
0 &a=0, b<0,\\
b+1 & a=0, b\geq 0,\\
0& a=1, b\geq -1,\\
-b-1& a=1, b<-1.
\end{cases}
$$
This follows from the well-known computation of 
$H^i_{{\mathbb P}^1}({\mathcal O}(m))$ for $i=0,1$ and $m\in {\mathbb Z}$.
(It also follows from a more general computation 
\cite[Theorem 8.1]{AZ}.)
If $\phi=\{{\mathcal O}_p\}$ for some $p\in {\mathbb P}^1$, then the adjacency 
matrix is 
$$A(\phi,\sigma)=\dim ({\mathcal O}_p, \Sigma^a({\mathcal O}_p))
=1, \;\; {\rm{for}}\;\; a=0,1.
$$
It is easy to see from the above computation that
\begin{equation}
\label{E5.1.6}\tag{E5.1.6}
\fpdim^1(\Sigma^a \circ (b))
=\begin{cases} 1 &a=0, b<0,\\
b+1 & a=0, b\geq 0,\\
1& a=1, b\geq -1,\\
-b-1& a=1, b<-1.
\end{cases}
\end{equation}

Now we consider the case when $n>1$.  If $\phi\in 
\Phi_{n,b}(coh({\mathbb P}^1))$, $\phi$ 
is a collection of ${\mathcal O}_p$ for finitely many 
distinct $p$'s.
In this case, the adjacency matrix $A(\phi, \Sigma^a \circ (b))$ 
is the identity $n\times n$-matrix for $a=0,1$, and 
$\rho(A(\phi, \sigma))=1$. Therefore
\begin{equation}
\label{E5.1.7}\tag{E5.1.7}
\fpdim^n(\Sigma^a \circ (b))=1
\end{equation}
for all $n> 1$, when restricted to the category $coh({\mathbb P}^1)$. 

It follows from Theorem \ref{xxthm3.5}(2,3) that\\
\noindent
{\bf Claim 5.1.2:} Consider $\Sigma^a \circ (b)$ 
as an endofunctor of $D^b(coh({\mathbb P}^1))$.
For $a,b\in {\mathbb Z}$ and 
$n\geq 1$, we have 
\begin{equation}
\label{E5.1.8}\tag{E5.1.8}
\fpdim^n(\Sigma^a \circ (b))
=\begin{cases} 0 & a\neq 0, 1,\\
1 &a=0, b<0,\\
b+1 & a=0, b\geq 0,\\
1& a=1, b\geq -1,\\
-b-1& a=1, b<-1.
\end{cases}
\end{equation}

Since $S=\Sigma \circ (-2)$, we have the following (also see Figure 1, next page)
\begin{equation}
\label{E5.1.9}\tag{E5.1.9}
\fpdim^n(\Sigma^a \circ S^b )=\fpdim^n(\Sigma^{a+b} \circ (-2b) )
=\begin{cases} 0 & a+b\neq 0, 1,\\
1 &a+b=0, b>0,\\
-(2b-1) & a+b=0, b\leq 0,\\
1& a+b=1, b\leq 0,\\
2b-1& a+b=1, b>0.
\end{cases}
\end{equation}

\noindent
{\bf Claim 5.1.3:}
Since $D^b(coh({\mathbb P}^1))$ and $D^b(\Mod_{f.d.}-\Bbbk K_2)$ 
are equivalent, the 
fp-theory of $D^b(\Mod_{f.d.}-\Bbbk K_2)$ agrees with \eqref{E5.1.9} 
and Figure 1 (next page).
\end{example}

\begin{figure}
    \centering
    \includegraphics{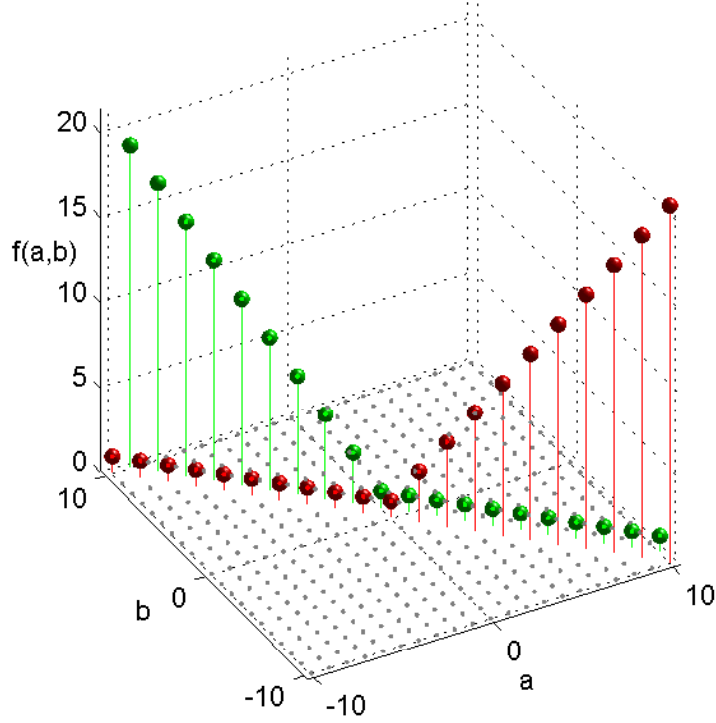}
    \caption{fp-$S$-theory for ${\mathbb P}^1$, where $f(a,b)=\fpdim(\Sigma^a\circ S^b)$.}
\end{figure}


\subsection{Frobenius-Perron theory of the quiver $A_2$}
\label{xxsec5.2}

We start with the following example. 

\begin{example}
\label{xxex5.2} 
Let $A$ be the ${\mathbb Z}$-graded algebra $\Bbbk[x]/(x^2)$
with $\deg x=1$. Let ${\mathcal C}:=\GKgr-A$ be the category of finitely generated 
graded left $A$-modules. Let $\sigma:=(-)$ be the degree shift functor 
of ${\mathcal C}$. It is clear that $\sigma$ is an autoequivalence of ${\mathcal C}$.
Let ${\mathfrak A}$ be the additive subcategory of ${\mathcal C}$ generated by 
$\sigma^n (A) =A(n)$ for all $n\in {\mathbb Z}$. Note that ${\mathfrak A}$ is not
abelian and that every object in ${\mathfrak A}$ is of the form $\bigoplus_{n\in {\mathbb Z}}
A(n)^{\oplus p_n}$ for some integers $p_n\geq 0$. Since the Hom-set in the 
graded module category consists of homomorphisms of degree zero, we have
$$\Hom_{\mathfrak A}(A,A(n))=\begin{cases} \Bbbk & n=0,1\\ 0& {\rm{otherwise}}
\end{cases}.$$
In the following diagram each arrow represents $1$-dimensional Hom for all 
possible $\Hom$-set for different objects $A(n)$
\begin{equation}
\label{E5.2.1}\tag{E5.2.1}
\cdots \longrightarrow A(-2) \longrightarrow A(-1) 
\longrightarrow A(0) \longrightarrow A(1) 
\longrightarrow A(2) \longrightarrow \cdots
\end{equation} 
(where the number of arrows from $A(m)$ to $A(n)$ agrees with $\dim \Hom(A(m),A(n))$).
It is easy to see that the set of indecomposable objects is
$\{ A(n)\}_{n\in {\mathbb Z}}$, which is also the set of bricks in 
${\mathfrak A}$. 
\end{example}

\begin{lemma}
\label{xxlem5.3}
Retain the notation as in Example \ref{xxex5.2}. 
When restricting $\sigma$ onto the category 
${\mathfrak A}$, we have, for every $m\geq 1$,
\begin{equation}
\label{E5.3.1}\tag{E5.3.1}
\fpdim^m(\sigma^n)=\begin{cases} 1 & n=0, 1, \\
0&{\rm{otherwise}} \end{cases}.
\end{equation}
\end{lemma}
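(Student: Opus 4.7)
The plan is a direct computation: we identify the brick subsets explicitly, write down the adjacency matrix for $(\phi,\sigma^n)$ in a convenient ordering, and observe that it is triangular, so its spectral radius equals the largest diagonal entry.

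First, from Example \ref{xxex5.2} the set of brick objects of $\mathfrak{A}$ is exactly $\{A(k)\mid k\in\mathbb{Z}\}$, and the Hom-spaces are given by
\begin{equation}
\notag
\dim\Hom_{\mathfrak{A}}(A(m),A(k))=
\begin{cases} 1 & k-m\in\{0,1\},\\ 0 & \text{otherwise.}\end{cases}
\end{equation}
Since $\sigma^n(A(k))=A(k+n)$, for any brick subset $\phi=\{A(m_1),\ldots,A(m_s)\}$ (where the $m_i$ are pairwise distinct) the adjacency matrix $A(\phi,\sigma^n)=(a_{ij})$ has
\begin{equation}
\notag
a_{ij}=\dim\Hom_{\mathfrak{A}}(A(m_i),A(m_j+n))=
\begin{cases} 1 & m_j-m_i\in\{-n,\,1-n\},\\ 0 & \text{otherwise.}\end{cases}
\end{equation}
After relabelling (which conjugates the matrix by a permutation and hence preserves $\rho$) we may assume $m_1<m_2<\cdots<m_s$.

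Next I would run the case analysis on $n$. If $n\le -1$, then $-n\ge 1$ and $1-n\ge 2$, so $a_{ij}\ne 0$ forces $m_j>m_i$, i.e.\ $j>i$, and in particular $a_{ii}=0$; the matrix is strictly upper triangular, so $\rho=0$. If $n\ge 2$, then $-n\le -2$ and $1-n\le -1$, so $a_{ij}\ne 0$ forces $j<i$ and $a_{ii}=0$; the matrix is strictly lower triangular, so $\rho=0$. If $n=0$, then $a_{ii}=1$ (as $0\in\{0,1\}$) and off-diagonally $a_{ij}\ne 0$ requires $m_j-m_i=1$, hence $j>i$; the matrix is upper triangular with $1$'s on the diagonal, so $\rho=1$. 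If $n=1$, then $a_{ii}=1$ (as $0\in\{-1,0\}$) and off-diagonally $a_{ij}\ne 0$ requires $m_j-m_i=-1$, hence $j<i$; the matrix is lower triangular with $1$'s on the diagonal, so $\rho=1$.

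Finally, taking $\sup$ over $\phi\in\Phi_{m,b}(\mathfrak{A})$: for $n\notin\{0,1\}$ every spectral radius is $0$, so $\fpdim^m(\sigma^n)=0$; for $n\in\{0,1\}$ every spectral radius is at most $1$, and the value $1$ is attained on any singleton $\phi=\{A(k)\}$ (and since any brick subset of size $m$ exists, e.g.\ $\{A(0),A(2),A(4),\ldots\}$, the $m$-th Frobenius-Perron dimension is indeed $1$ for every $m\ge 1$). This yields \eqref{E5.3.1}. There is no real obstacle; the only point requiring care is the reordering step, which ensures the triangular form and allows the spectral radius to be read off from the diagonal.
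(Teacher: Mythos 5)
Your proof is correct and follows essentially the same route as the paper: order the degrees in the brick set, observe that the adjacency matrix of $(\phi,\sigma^n)$ is triangular, and read off the spectral radius from the diagonal, with the diagonal entries nonzero exactly when $n\in\{0,1\}$. The only cosmetic difference is that in the $n=0,1$ cases the brick-set condition $\dim(X_i,X_j)=\delta_{ij}$ actually forces all off-diagonal entries to vanish as well (so the matrix is the identity), but your weaker triangularity observation already suffices.
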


\begin{proof}
When $n=0$, \eqref{E5.3.1} is trivial. Let $n=1$. For each set
$\phi\in \Phi_{m,b}$, we can assume that 
$\phi=\{A(d_1), A(d_2,),\cdots, A(d_m)\}$
for a strictly increasing sequence $\{d_i\mid i=1,2,\cdots,m\}$. 
For any $i< j$, the $(i,j)$-entry of the adjacency matrix is 
$$a_{ij}=\dim(A(d_i), A(d_j+1))=0.$$ 
Thus $A(\phi, \sigma)$ is a 
lower triangular matrix with 
$$a_{ii}=\dim(A(d_i), A(d_i+1))=1.$$ 
Hence $\rho(A(\phi,\sigma))=1$. So $\fpdim^m(\sigma)=1$.

Similarly, $\fpdim^m(\sigma^n)=0$ when $n>1$ as 
$\dim(A(d_i),A(d_i+2))=0$ for all $i$.

Let $n<0$. Let $\phi=\{A(d_1), A(d_2,),\cdots, A(d_m)\}\in \Phi_{m,b}$ where $d_i$ are 
strictly decreasing. Then $a_{ij}=\dim(A(d_i), A(d_j+n))=0$ for all $i\leq j$. Thus 
$\rho(A(\phi, \sigma^{n}))=0$ and \eqref{E5.3.1} follows in this case.
\end{proof}

\begin{example}
\label{xxex5.4} 
Consider the quiver $A_2$
\begin{equation}
\label{E5.4.1}\tag{E5.4.1}
\bullet_2 \longleftarrow \bullet_1 .
\end{equation}
Let $P_i$ (respectively, $I_i$) be the projective 
(respectively, injective) left $\Bbbk A_2$-modules 
corresponding to vertices $i$, for $i=1,2$,
It is well-known that there are only three indecomposable 
left modules over $A_2$, with Auslander-Reiten quiver (or 
AR-quiver, for short)
\begin{equation}
\label{E5.4.2}\tag{E5.4.2}
P_2 \longrightarrow P_1(=I_2)\longrightarrow I_1
\end{equation}
where each arrow represents a nonzero homomorphism (up to a scalar)
\cite[Ex.1.13, pp.24-25]{Sc1}.
The AR-translation (or translation, for short) $\tau$ is determined by 
$\tau (I_1)=P_2$. 
Let ${\mathcal T}$ be $D^b(\Mod_{f.d.}-\Bbbk A_2)$. 
The  Auslander-Reiten theory can be extended
from the module category to the derived category. It is direct that,
in ${\mathcal T}$, we have the AR-quiver of all indecomposable objects


\begin{equation}
\label{E5.4.3}\tag{E5.4.3}
\quad 
\end{equation}

\unitlength=1mm
\begin{picture}(100,16)
\put(14,10){\vector(1,-1){7}}
\put(28,4){\vector(1,1){6}}
\put(40,10){\vector(1,-1){7}}
\put(52,4){\vector(1,1){6}}
\put(64,10){\vector(1,-1){7}}
\put(76,4){\vector(1,1){6}}
\put(88,10){\vector(1,-1){7}}
\put(100,4){\vector(1,1){6}}
\put(8,12){$P_{2}[-1]$}
\put(19,0){$P_{1}[-1]$}
\put(31,12){$I_{1}[-1]$}
\put(48,0){$P_{2}$}
\put(56,12){$P_{1}=I_{2}$}
\put(72,0){$I_{1}$}
\put(82,12){$P_{2}[1]$}
\put(94,0){$P_{1}[1]$}
\put(104,12){$I_{1}[1]$}
\put(4,6){$\ldots$}
\put(112,6){$\ldots$}
\end{picture}

\vspace{5mm}

The above represents all possible nonzero morphisms (up to a scalar)
between non-isomorphic indecomposable objects in ${\mathcal T}$.
Note that ${\mathcal T}$ has a Serre functor $S$ and that the AR-translation
$\tau$ can be extended to a functor of ${\mathcal T}$ 
such that $S=\Sigma \circ \tau$ \cite[Proposition I.2.3]{RVdB} or 
\cite[Remarks (2), p. 23]{Cr}. After
we identifying 
$$P_2[i]\leftrightarrow A(3i), \quad P_1[i]\leftrightarrow A(3i+1), 
\quad I_1[i]\leftrightarrow A(3i+2),$$
\eqref{E5.4.3} agrees with \eqref{E5.2.1}. Using the above identification, 
at least when restricted to objects, we have 
\begin{align}
\label{E5.4.4}\tag{E5.4.4}
\Sigma ( A(i))&\cong A(i+3),\\
\label{E5.4.5}\tag{E5.4.5}
\tau ( A(i)) &\cong A(i-2), \\
\label{E5.4.6}\tag{E5.4.6}
S ( A(i)) &\cong A(i+1).
\end{align}
It follows from the definition of the AR-quiver \cite[VII]{ARS} that the 
degree of $\tau$ is $-2$, see also \cite[Picture on p. 131]{AS}. Equation
\eqref{E5.4.5} just means that the degree of $\tau$ is $-2$.

By equation \eqref{E5.4.6}, the Serre functor $S$ satisfies the property 
of $\sigma$ defined in Example \ref{xxex5.2}. By Lemma \ref{xxlem5.3} 
or \eqref{E5.3.1}, we have
$$\fpdim^n(\Sigma^a \circ S^b)=
\fpdim^n(\sigma^{3a+b})=\begin{cases} 1 & 3a+b=0,1, \\
0 &{\rm{otherwise}}.\end{cases}$$
Therefore the fp-$S$-theory of ${\mathcal T}$ is given as above, and given 
as in Figure 2 (next page). 

\begin{figure}
    \centering
    \includegraphics{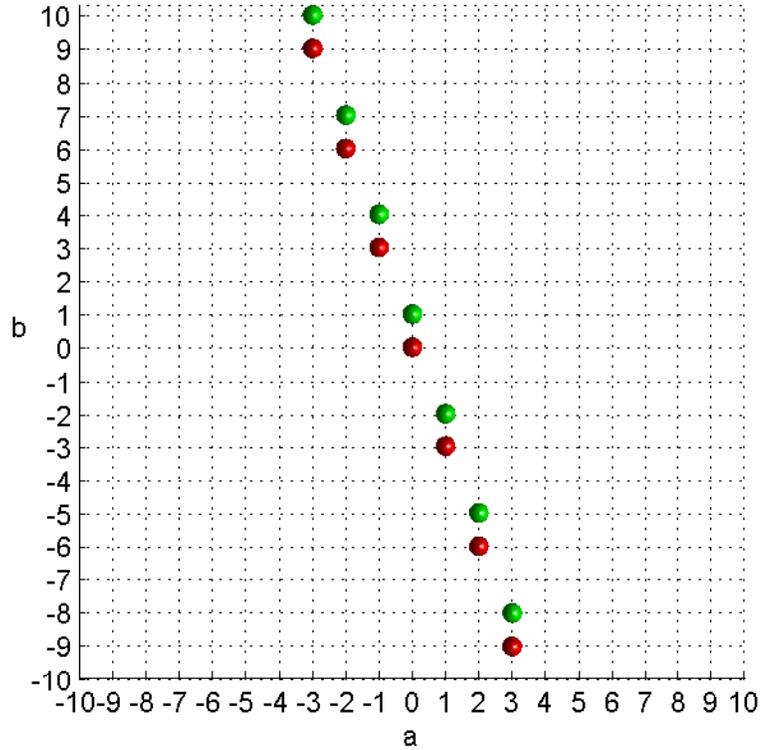}
    \caption{fp-$S$-theory for $A_2$}
\end{figure}

In particular, we have proven 
$$\fpgldim (D^b(\Mod_{f.d.}-\Bbbk A_2)) =
\fpdim (D^b(\Mod_{f.d.}-\Bbbk A_2))=\fpdim (\Sigma)=0,$$
which is less than $\gldim \Bbbk A_2=1$.
\end{example}

\subsection{An example of non-integral
Frobenius-Perron dimension}
\label{xxsec5.3}

In the next example, we ``glue'' $K_2$ in \eqref{E5.1.3} 
and $A_2$ in \eqref{E5.4.1} together.

\begin{example}
\label{xxex5.5}
Let $G_2$ be the quiver 
\begin{equation}
\label{E5.5.1}\tag{E5.5.1}
\xymatrix{ {\stackrel{\bullet}{1}}\ar@/^1pc/[rr]_{\gamma}\ar@/^1.8pc/[rr]^{\beta}&& 
{\stackrel{\bullet}{2}}\ar@/^1.3pc/[ll]^{\alpha}\\}
\end{equation}
consisting of two vertices
$1$ and $2$, with arrow $\alpha: 2\to 1$
and $\beta, \gamma: 1\to 2$ satisfying relations
\begin{equation}
\label{E5.5.2}\tag{E5.5.2}
R: \quad \beta \alpha=\gamma\alpha =0, 
\alpha \beta=\alpha \gamma =0.
\end{equation}
Note that $(G_2,R)$ is a quiver with relations. 
The corresponding quiver algebra with relations
is a 5-dimensional algebra
$$A=\Bbbk e_1+ \Bbbk e_2+ \Bbbk \alpha +\Bbbk 
\beta +\Bbbk \gamma$$
satisfying, in addition to \eqref{E5.5.2}, 
\begin{equation}
\label{E5.5.3}\tag{E5.5.3}
e_1^2=e_1, \quad e_2^2=e_2, \quad 1=e_1+e_2,
\end{equation}
\begin{equation}
\label{E5.5.4}\tag{E5.5.4}
e_2\alpha=0, \quad \alpha e_1=0, 
\quad e_1\beta=e_1\gamma=0, \quad 
\beta e_2=\gamma e_2=0,
\end{equation}
and
\begin{equation}
\label{E5.5.5}\tag{E5.5.5}
\alpha e_2=\alpha=e_1 \alpha, \quad
\beta e_1=\beta=e_2\beta, \quad 
\gamma e_1=\gamma=e_2\gamma.
\end{equation}
We can use the following matrix form 
to represent the algebra $A$
$$A=\begin{pmatrix}
\Bbbk e_1 & \Bbbk \alpha\\
\Bbbk \beta +\Bbbk \gamma & \Bbbk e_2
\end{pmatrix}.
$$

For each $i=1,2$, let $S_i$ be the left simple $A$-module
corresponding to the vertex $i$ and $P_i$ be the 
projective cover of $S_i$. Then $P_1\cong Ae_1$ is isomorphic
to the first column of $A$, namely, 
$\begin{pmatrix} \Bbbk e_1\\
\Bbbk \beta +\Bbbk \gamma\end{pmatrix}$; 
and $P_2\cong Ae_2$ is isomorphic to the second 
column of $A$, namely, $\begin{pmatrix}\Bbbk \alpha \\ 
\Bbbk e_2\end{pmatrix}$. 

We will show that the Frobenius-Perron dimension of the category 
of finite dimensional representations of $(G_2,R)$ is $\sqrt{2}$, by 
using several lemmas below that contain some detailed computations.
\end{example}

\begin{lemma}
\label{xxlem5.6}
Let $V=(V_1,V_2)$ be a representation of $(G_2,R)$. Let 
$\overline{W}={\rm{im}} \;\alpha$
and $K=\ker \alpha$. Take a $\Bbbk$-space decomposition $V_2=W\oplus K$ 
where $W\cong \overline{W}$. 
Then there is a decomposition of $(G_2,R)$-representations 
$V\cong (\overline{W}\oplus T, W\oplus K)\cong (\overline{W}, W)\oplus (T,K)$
where $\alpha$ is the identity when restricted to $W$ 
{\rm{(}}and identifying $W$ with
$\overline{W}${\rm{)}}  and is zero
when restricted to $K$, where $\beta$ and $\gamma$ are zero 
when restricted to $\overline{W}$. 
\end{lemma}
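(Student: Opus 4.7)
The plan is to exploit the relations in $R$ to show that the decomposition is essentially forced. The key observation is that the four relations $\beta\alpha=\gamma\alpha=\alpha\beta=\alpha\gamma=0$ decouple the two halves of the representation rather strongly: the maps $\beta,\gamma$ must vanish on $\text{im}\,\alpha=\overline{W}$, and conversely they must land in $\ker\alpha=K$. Because of this, once I pick a vector-space splitting $V_1=\overline{W}\oplus T$, both summands will automatically be subrepresentations.

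First I would record the immediate consequences of the relations applied to the representation $V$. From $\alpha\beta=\alpha\gamma=0$, I get $\text{im}\,\beta\subseteq K$ and $\text{im}\,\gamma\subseteq K$, so every image of $\beta$ or $\gamma$ already lies in the chosen complement $K$ of $W$. From $\beta\alpha=\gamma\alpha=0$, I get $\beta(\overline{W})=\gamma(\overline{W})=0$. Meanwhile, by construction $\alpha|_W\colon W\to\overline{W}$ is an isomorphism (since $W$ is a complement to $\ker\alpha$) and $\alpha|_K=0$.

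Next I would choose any vector-space complement $T$ of $\overline{W}$ in $V_1$, so $V_1=\overline{W}\oplus T$, and then verify both pieces are subrepresentations of $V$. For $(\overline{W},W)$: the map $\alpha$ restricts to the isomorphism $W\to\overline{W}$, while $\beta$ and $\gamma$ kill $\overline{W}$, so in particular they map $\overline{W}$ into $W$ (trivially); hence $(\overline{W},W)$ is closed under all three structure maps. For $(T,K)$: the map $\alpha$ kills $K$, so it sends $K$ into $T$; and because $\text{im}\,\beta,\text{im}\,\gamma\subseteq K$, the maps $\beta$ and $\gamma$ send $T$ into $K$. Thus both pairs are subrepresentations, and $V=(\overline{W}\oplus T,\,W\oplus K)$ is the internal direct sum $(\overline{W},W)\oplus(T,K)$.

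There is no serious obstacle here, since the relations do all the work; the only real content is the bookkeeping in steps (1)--(2) above to see that all four relations together force $\beta$ and $\gamma$ to vanish on $\overline{W}$ and to land in $K$, which is exactly what is needed to make the pointwise splitting a splitting of representations. I would probably also remark at the end that the choices of $W$ and $T$ are not canonical, but different choices produce isomorphic decompositions, in preparation for the classification of indecomposables that presumably follows.
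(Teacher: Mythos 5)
Your proof is correct and follows exactly the paper's (much terser) argument: choose the splittings $V_2=W\oplus K$ and $V_1=\overline{W}\oplus T$, then use the four relations to check that both pieces are subrepresentations. You have simply written out the bookkeeping that the paper summarizes as ``the assertion follows by using the relations in (E5.5.2).''
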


\begin{proof} Since $\overline{W}={\rm{im}} \; \alpha$, $V_2\cong W\oplus K$
where $K=\ker \alpha$ and $W\cong \overline{W}$. Write $V_1=\overline{W}\oplus T$ for some
$\Bbbk$-subspace $T\subseteq V_1$. The assertion follows by using the
relations in \eqref{E5.5.2}.
\end{proof}

Recall that $A_2$ is the quiver given in \eqref{E5.4.1}
and $K_2$ is the Kronecker quiver given in \eqref{E5.1.3}.
By the above lemma, the subrepresentation $(W,W)$ (where 
we identify $\overline{W}$ with $W$) is in fact 
a representation of $\begin{pmatrix}
\Bbbk e_1 & \Bbbk \alpha\\
 0 & \Bbbk e_2
\end{pmatrix}(\cong \Bbbk A_2)$ and the 
subrepresentation $(T,K)$ is a representation of
$\begin{pmatrix}
\Bbbk e_1 & 0\\
\Bbbk \beta +\Bbbk \gamma& \Bbbk e_2
\end{pmatrix}(\cong \Bbbk K_2)$.

Let $I_n$ be the $n\times n$-identity matrix.
Let $Bl(\lambda)$ denote the block matrix
$$\begin{pmatrix} 
\lambda & 1     & 0& \cdots & 0 & 0\\
0&      \lambda & 1 & \cdots & 0& 0\\
\ldots & \ldots & \ldots &\ldots & \ldots & \ldots\\
0& 0& 0& \cdots & \lambda& 1\\
0& 0& 0& \cdots & 0& \lambda
\end{pmatrix}.
$$

\begin{lemma}
\label{xxlem5.7} Suppose $\Bbbk$ is 
of characteristic zero.
The following is a complete list of indecomposable
representations of $(G_2,R)$.
\begin{enumerate}
\item[(1)]
$P_2\cong (\Bbbk, \Bbbk)$, where $\alpha=I_1$ and 
$\beta=\gamma=0$.
\item[(2)]
$X_n(\lambda)=(K, K)$ with $\dim K=n$, where $\alpha=0$,
$\beta=I_n$ and $\gamma=Bl(\lambda)$ for some $\lambda\in \Bbbk$.
\item[(3)]
$Y_n=(K, K)$ with $\dim K=n$, where $\alpha=0$,
$\beta=Bl(0)$ and $\gamma=I_n$.
\item[(4)]
$S_{2,n}=(T,K)$ with $\dim T=n$ and $\dim K=n+1$, where
$\alpha=0$, $\beta=(I_n, 0)$ and $\gamma=(0,I_n)$.
\item[(5)]
$S_{1,n}=(T,K)$ with $\dim T=n+1$ and $\dim K=n$, where
$\alpha=0$, $\beta=(I_n, 0)^\tau$ and $\gamma=(0,I_n)^\tau$.
\end{enumerate}
As a consequence, $\Bbbk G_2/(R)$ is of tame representation type
{\rm{[}}Definition \ref{xxdef7.1}{\rm{]}}.
\end{lemma}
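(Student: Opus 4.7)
The strategy is to use Lemma \ref{xxlem5.6} to split an arbitrary representation into two pieces that can be analyzed separately, then invoke the classical Kronecker classification for the second piece.

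Given any finite-dimensional representation $V=(V_1,V_2)$ of $(G_2,R)$, Lemma \ref{xxlem5.6} produces a decomposition $V\cong (\bar W, W)\oplus (T,K)$ in which $\alpha$ is the identity on the first summand (after identifying $W$ with $\bar W$) with $\beta=\gamma=0$ there, and $\alpha=0$ on the second summand. If $V$ is indecomposable, then one of the two summands must vanish, so it suffices to classify indecomposables within each of the two subcategories. In the first case the data on $(\bar W,W)$ reduces to a vector space equipped with the identity isomorphism, which decomposes as a direct sum of copies of $(\Bbbk,\Bbbk)=P_2$; hence the only indecomposable here is $P_2$, which is case (1).

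In the second case $\alpha=0$, so the relations in \eqref{E5.5.2} are vacuous and $V=(T,K)$ is precisely a representation of the Kronecker quiver $K_2$ via the two maps $\beta,\gamma:T\to K$. I would then invoke the classical Kronecker--Weierstrass classification of matrix pencils over an algebraically closed field, which yields exactly the families described in (2)--(5): preprojective indecomposables of dimension vector $(n+1,n)$ appearing as $S_{1,n}$, preinjective indecomposables of dimension vector $(n,n+1)$ appearing as $S_{2,n}$, and two families of regular indecomposables of dimension vector $(n,n)$, namely $X_n(\lambda)$ for $\lambda\in\Bbbk$ (where $\gamma$ is put in Jordan form, using algebraic closure) and $Y_n$ (the $\lambda=\infty$ point of $\mathbb{P}^1$). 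Verifying that each of the matrix triples listed in (2)--(5) is indecomposable, and that no two different entries in the list are isomorphic, is a direct check using the normal forms.

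The tame representation type assertion then follows from the structure of the list: for each fixed dimension $n$ the indecomposables form a discrete set of isomorphism classes together with a single one-parameter family $\{X_n(\lambda)\}_{\lambda\in\Bbbk}$, matching Definition \ref{xxdef7.1}. The main obstacle is the second paragraph: invoking (or reproducing) the Kronecker normal form classification cleanly, and confirming that the specific matrix forms listed in the lemma really do exhaust the indecomposable Kronecker pairs; the characteristic-zero hypothesis is convenient for the Jordan block form of $\gamma$, though in fact algebraic closure alone would suffice.
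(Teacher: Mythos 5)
Your proposal is correct and follows essentially the same route as the paper: Lemma \ref{xxlem5.6} isolates the $\alpha\neq 0$ part as a sum of copies of $P_2$, and the $\alpha=0$ part is a Kronecker-quiver representation, classified by the classical Kronecker normal form (the paper cites \cite[Theorem 4.3.2]{Ben}), with tameness read off from the resulting list.
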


\begin{proof} (1) By Lemma \ref{xxlem5.6}, 
this is the only case that could happen 
when $\alpha\neq 0$. Now we assume $\alpha=0$. 

(2,3,4,5) If $\alpha=0$, then we are working with 
representations of Kronecker quiver $K_2$ \eqref{E5.1.3}.
The classification follows from a classical result of Kronecker 
\cite[Theorem 4.3.2]{Ben}.

By (1-5), for each integer $n$, there are only finitely many
1-parameter families of indecomposable representations of 
dimension $n$. Therefore $A$ is of tame representation type. 
\end{proof}

The following is a consequence of Lemma \ref{xxlem5.7}
and a direct computation. 

\begin{lemma}
\label{xxlem5.8} Retain the hypotheses of Lemma \ref{xxlem5.7}.
The following is a complete list of brick 
representations of $(G_2,R)$.
\begin{enumerate}
\item[(1)]
$P_2\cong (\Bbbk, \Bbbk)$, where $\alpha=I_1$ and 
$\beta=\gamma=0$.
\item[(2)]
$X_1(\lambda)=(\Bbbk, \Bbbk)$, where $\alpha=0$,
$\beta=I_1$ and $\gamma=\lambda I_1$ for some $\lambda\in \Bbbk$.
\item[(3)]
$Y_1=(\Bbbk, \Bbbk)$, where $\alpha=0$,
$\beta=0$ and $\gamma=I_1$.
\item[(4)]
$S_{2,n}$ for $n\geq 0$.
\item[(5)]
$S_{1,n}$ for $n\geq 0$.
\end{enumerate}
The set $\Phi_{1,b}$ consists of the above objects.
\end{lemma}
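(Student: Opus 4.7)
The plan is to reduce, via Lemma \ref{xxlem5.7}, to a routine computation of endomorphism rings in each of the five families. A brick is in particular indecomposable, for if $M = M_1 \oplus M_2$ with both summands nonzero, then the idempotent projection onto $M_1$ is a non-scalar element of $\End(M)$. Thus any brick of $(G_2, R)$ must be isomorphic to one of $P_2$, $X_n(\lambda)$, $Y_n$, $S_{2,n}$, or $S_{1,n}$, and the job becomes to compute $\dim_\Bbbk \End(M)$ in each family and determine when it equals $1$.

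For $P_2 = (\Bbbk,\Bbbk,\alpha=I_1,\beta=\gamma=0)$, any endomorphism $(f_1,f_2)$ is constrained by $f_2\alpha=\alpha f_1$, which, since $\alpha$ is invertible, forces $f_1=f_2\in\Bbbk$. For $X_n(\lambda)$, I would observe that $f_2\beta=\beta f_1$ with $\beta=I_n$ forces $f_2=f_1$, and then $f_2\gamma=\gamma f_1$ reduces to $f_1\,Bl(\lambda)=Bl(\lambda)\,f_1$; since the commutant of a single Jordan block of size $n$ is the $n$-dimensional algebra of upper-triangular Toeplitz matrices, $\End(X_n(\lambda))\cong \Bbbk[x]/(x^n)$, which is a field (namely $\Bbbk$) iff $n=1$. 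The argument for $Y_n$ is identical, using $\gamma=I_n$ to identify $f_2=f_1$ and then commuting with $Bl(0)$.

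The main effort is in the rectangular cases $S_{2,n}$ and $S_{1,n}$, where I claim $\End=\Bbbk$ for every $n\geq 0$. Take $S_{2,n}=(T,K)$ with $\dim T=n$, $\dim K=n+1$, and $\beta=\binom{I_n}{0}$, $\gamma=\binom{0}{I_n}$; the compatibility conditions $f_2\beta=\beta f_1$ and $f_2\gamma=\gamma f_1$ say exactly that the first $n$ columns of $f_2$ are $\binom{f_1}{0}$ and the last $n$ columns of $f_2$ are $\binom{0}{f_1}$. Comparing the overlapping middle columns produces the Toeplitz relations $c_{ij}=c_{i-1,j-1}$ on $f_1=(c_{ij})$; the two boundary rows of $f_2$ give the sparsity conditions $c_{1,j}=0$ for $j\geq 2$ and $c_{n,j}=0$ for $j\leq n-1$, which propagate by Toeplitz to force $f_1=c_{11}\,I_n$, and hence $f_2=c_{11}\,I_{n+1}$. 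For $S_{1,n}$ I would either repeat the same argument with transposed matrices or invoke the $\Bbbk$-linear duality that interchanges $S_{1,n}$ and $S_{2,n}$. The main obstacle is purely bookkeeping: correctly tracking the overlap of the two column-constraints and propagating the Toeplitz recursion to the corner entries; once that is organized, the argument is mechanical, and the conclusion $\Phi_{1,b}=\{P_2,\,X_1(\lambda),\,Y_1,\,S_{2,n},\,S_{1,n}\}$ follows immediately.
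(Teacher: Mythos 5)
Your proposal is correct and is exactly the route the paper intends: the paper dismisses this lemma as ``a consequence of Lemma \ref{xxlem5.7} and a direct computation,'' and your argument is precisely that direct computation --- bricks are indecomposable, so one runs through the five families of Lemma \ref{xxlem5.7} and checks which have endomorphism ring $\Bbbk$ (the Jordan-block commutant kills $X_n(\lambda)$ and $Y_n$ for $n\geq 2$, while the Toeplitz-plus-boundary analysis gives $\End(S_{2,n})=\End(S_{1,n})=\Bbbk$ for all $n$). The computations check out.
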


Let $X_1(\infty)$ denote $Y_1$. We have the following
short exact sequences of 
$(G_2,R)$-representations
$$0\to S_1\to P_2\to S_2\to 0,$$
$$0\to S_2\to X_1(\lambda)\to S_1\to 0,$$
$$0\to S_{2}^{n+1}\to S_{2,n}\to S_1^{n}\to 0,$$
$$0\to S_2^{n}\to S_{1,n}\to S_{1}^{n+1}\to 0,$$
$$0\to S_2^{2}\to S_{2,n}\to S_{1,n-1}\to 0,$$
where $n\geq 1$ for the last exact sequence,
and have the following nonzero homs, where $A=\Bbbk G_2/(R)$,
$$
\begin{aligned}
\Hom_{A}(X_1(\lambda), S_{1,n})&\neq 0, \;\; \forall \; n\geq 1\\
\Hom_{A}(S_{2,n}, X_1(\lambda))&\neq 0, \;\; \forall \; n\geq 1\\
\Hom_{A}(S_{2,m}, S_{2,n})&\neq 0, \;\; \forall \; m\leq n\\
\Hom_{A}(S_{1,n}, S_{1,m})&\neq 0, \;\; \forall \; m\leq n\\
\Hom_{A}(S_{2,n}, S_{1,m})&\neq 0, \;\; \forall \; m+n\geq 1
\end{aligned}
$$

\begin{lemma}
\label{xxlem5.9} 
Retain the hypotheses of Lemma \ref{xxlem5.7}.
The following is  the complete list of zero hom-sets between 
brick representations of $G_2$ in both directions.
\begin{enumerate}
\item[(1)]
$\Hom_{A}(X_1(\lambda), X_1(\lambda'))=
\Hom_{A}(X_1(\lambda'), X_1(\lambda))=0$ if $\lambda\neq \lambda'$
in $\Bbbk \cup\{\infty\}$.
\item[(2)]
$\Hom_{A}(S_1, S_2)=\Hom_{A}(S_2, S_1)=0$.
\end{enumerate}
As a consequence, if $\phi\in \Phi_{n,b}$ for some $n\geq 2$,
then $\phi=\{S_1, S_2\}$ or $\phi=\{X_1(\lambda_i)\}_{i=1}^n$
for different parameters $\{\lambda_1,\cdots,\lambda_n\}$.
\end{lemma}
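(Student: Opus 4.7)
The plan is to verify parts (1) and (2) by direct linear-algebra computations on morphisms of quiver representations, and then derive the consequence by going through the list of bricks in Lemma \ref{xxlem5.8} and ruling out all pairs not of the two claimed forms using the non-zero hom statements that were collected just before Lemma \ref{xxlem5.9}, together with a handful of additional small calculations.

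First, for part (1), I would write a morphism $X_1(\lambda)\to X_1(\lambda')$ as a pair $(\phi_1,\phi_2)$ of scalars and use the compatibility with $\beta$ and $\gamma$. Since $\alpha=0$ on both sides, the constraints reduce to $\phi_2 = \phi_1$ (from $\beta$) and $\lambda'\phi_1 = \lambda \phi_2$ (from $\gamma$), forcing $(\lambda-\lambda')\phi_1=0$ and hence $\phi_1=\phi_2=0$ when $\lambda\neq\lambda'$ in $\Bbbk$. The case involving $X_1(\infty)=Y_1$ is handled the same way, and by symmetry the computation in the opposite direction is identical. Part (2) is immediate because $S_1=(\Bbbk,0)$ and $S_2=(0,\Bbbk)$ have zero components in complementary positions.

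Next, for the consequence, I would show that if $\{X,Y\}$ is a brick set with $X\neq Y$, then either both are of the form $X_1(\lambda)$ or $\{X,Y\}=\{S_1,S_2\}$. The only bricks by Lemma \ref{xxlem5.8} are $P_2$, the $X_1(\lambda)$ for $\lambda\in\Bbbk\cup\{\infty\}$, and $S_{1,n}, S_{2,n}$ for $n\geq 0$ (with $S_{1,0}=S_1$, $S_{2,0}=S_2$). For any pair $(S_{i,m},S_{j,n})$ with $(m,n)\neq(0,0)$ or $(i,j)\neq(1,2),(2,1)$, the non-zero hom statements recorded right before the lemma already give a non-zero map in at least one direction. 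For pairs involving some $X_1(\lambda)$ and some $S_{i,n}$ with $n\geq 1$, the same list supplies a non-zero map. The remaining cases to kill by hand are $\{X_1(\lambda),S_1\}$, $\{X_1(\lambda),S_2\}$, and all pairs containing $P_2$. For the first two, a direct check on $(\phi_1,\phi_2)$ shows that precisely one of the two Hom-spaces is non-zero (namely $\Hom(X_1(\lambda),S_1)$ and $\Hom(S_2,X_1(\lambda))$). For pairs with $P_2$, projectivity of $P_2=Ae_2$ gives $\Hom_A(P_2,M)\cong e_2 M=M_2$, which is non-zero whenever the second vertex space of $M$ is non-zero; this handles everything except $M=S_1$, and in that case a direct check (using $\alpha_{S_1}=0$, $\beta_{P_2}=\gamma_{P_2}=0$) produces a non-zero map $S_1\to P_2$.

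The only real obstacle is bookkeeping: the brick list has several infinite families, so the potential pairs must be organized carefully to make sure none are missed. Once the pairs are grouped as (i) two $S_{i,n}$'s, (ii) one $X_1(\lambda)$ and one $S_{i,n}$, (iii) two $X_1(\lambda)$'s, and (iv) anything paired with $P_2$, each group is settled either by quoting a non-zero Hom from the preceding display or by a two-line matrix computation. No new ideas are needed beyond the defining relations \eqref{E5.5.2} and the block form of the bricks in Lemma \ref{xxlem5.8}.
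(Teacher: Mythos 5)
Your proposal is correct and follows essentially the same route the paper intends: the paper states Lemma \ref{xxlem5.9} without a separate proof, relying on the displayed short exact sequences and nonzero Hom-sets preceding it together with the brick list of Lemma \ref{xxlem5.8}, and your case analysis (pairs of $S_{i,n}$'s, pairs mixing $X_1(\lambda)$ with $S_{i,n}$, pairs of $X_1(\lambda)$'s, and pairs involving the projective $P_2$) just makes that bookkeeping explicit. The direct matrix checks for parts (1) and (2) and for the residual cases $\Hom(X_1(\lambda),S_1)$, $\Hom(S_2,X_1(\lambda))$, $\Hom(S_1,P_2)$ are all accurate.
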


We also need to compute the $\Ext^1_{A}$-groups.

\begin{lemma}
\label{xxlem5.10} 
Retain the hypotheses of Lemma \ref{xxlem5.7}.
Let $\lambda\neq \lambda'$ be
in $\Bbbk \cup\{\infty\}$.
\begin{enumerate}
\item[(1)]
$\Ext^1_{A}(X_1(\lambda), X_1(\lambda))=\Hom_{A}(X_1(\lambda), X_1(\lambda))=\Bbbk$.
\item[(2)]
$\Ext^1_{A}(X_1(\lambda), X_1(\lambda'))=\Hom_{A}(X_1(\lambda), X_1(\lambda'))=0$.
\item[(3)]
$\begin{pmatrix}
\Ext^1_{A}(S_1, S_1)&\Ext^1_{A}(S_1, S_2)\\
\Ext^1_{A}(S_2, S_1)&\Ext^1_{A}(S_2, S_2)
\end{pmatrix}=\begin{pmatrix}
0&\Bbbk^{\oplus 2}\\
\Bbbk&0
\end{pmatrix}.$
\item[(4)]
$\Ext^1_{A}(P_2, P_2)=0$.
\item[(5)]
$\dim \Ext^1_{A}(S_{2,n}, S_{2,n})\leq 1$ for all $n$.
\item[(6)]
$\dim \Ext^1_{A}(S_{1,n}, S_{1,n})\leq 1$ for all $n$.
\end{enumerate}
\end{lemma}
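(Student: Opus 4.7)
My plan splits the six assertions into three groups handled by different techniques. The Hom portions of (1) and (2) are already in place from Lemmas \ref{xxlem5.8} and \ref{xxlem5.9} (where $X_1(\lambda)$ is listed as a brick and where $\Hom_A(X_1(\lambda), X_1(\lambda')) = 0$ for $\lambda \neq \lambda'$ is part of the zero-hom list), so I concentrate on the $\Ext^1$ computations.

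For parts (3) and (4), I would write down the first syzygies over $A$. From the description in Example \ref{xxex5.5} we have $P_1 = A e_1 = (\Bbbk e_1, \Bbbk\beta + \Bbbk\gamma)$ and $P_2 = A e_2 = (\Bbbk\alpha, \Bbbk e_2)$; inspecting the radicals yields $\Omega S_1 \cong S_2^{\oplus 2}$ (contributed by $\beta,\gamma$) and $\Omega S_2 \cong S_1$ (contributed by $\alpha$). Because the resolution is minimal and $S_j$ is simple, the induced differential on $\Hom_A(P^{(\bullet)}, S_j)$ vanishes, so $\Ext^1_A(S_i, S_j) = \Hom_A(\Omega S_i, S_j)$; reading off the four values gives the matrix asserted in (3). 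Part (4) is immediate since $P_2$ is projective.

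For parts (1) and (2), I would use the classical block-matrix parametrization of extensions. Any short exact sequence $0 \to X_1(\lambda') \to E \to X_1(\lambda) \to 0$ can be put in the upper-triangular form
\[
\beta_E = \begin{pmatrix} 1 & \beta_* \\ 0 & 1 \end{pmatrix}, \quad
\gamma_E = \begin{pmatrix} \lambda' & \gamma_* \\ 0 & \lambda \end{pmatrix}, \quad
\alpha_E = \begin{pmatrix} 0 & \alpha_* \\ 0 & 0 \end{pmatrix},
\]
and the four relations \eqref{E5.5.2} force $\alpha_* = 0$ while leaving $(\beta_*, \gamma_*) \in \Bbbk^2$ free. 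Trivial extensions are the image of the map $(\phi,\psi) \mapsto (\phi - \psi,\; \lambda'\phi - \lambda\psi)$, whose $2 \times 2$ matrix has determinant $\lambda' - \lambda$: invertible when $\lambda \neq \lambda'$ (so $\Ext^1 = 0$) and of one-dimensional image $\{(c, \lambda c)\}$ when $\lambda = \lambda'$ (so $\Ext^1 = \Bbbk$). The argument extends uniformly to $\lambda, \lambda' \in \Bbbk \cup \{\infty\}$ by regarding $X_1(\lambda)$ as the point $[a:b] \in \mathbb{P}^1$ with $(\beta,\gamma) = (a,b)$.

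For parts (5) and (6), the same parametrization applies with $\beta_*, \gamma_*, \alpha_*$ now rectangular matrices of appropriate shapes. The essential step is the vanishing $\alpha_* = 0$: using the explicit shift form of $\beta_0$ and $\gamma_0$ from Lemma \ref{xxlem5.7}(4,5), one checks for $S_{2,n}$ that the union of the images of $\beta_0$ and $\gamma_0$ spans all of $V_2^{\textup{quot}}$, so the pair of constraints $\alpha_*\beta_0 = \alpha_*\gamma_0 = 0$ forces $\alpha_* = 0$; for $S_{1,n}$ the analogous pair (post-composition) does the same. Once $\alpha_* = 0$, the extension is the same data as a $B$-extension for the Kronecker algebra $B = \Bbbk K_2$, and the Euler-form identity $\langle (n,n+1),(n,n+1)\rangle = 1 = \dim\Hom_B$ (respectively its analogue for $S_{1,n}$) shows $\Ext^1_B(S_{i,n}, S_{i,n}) = 0$; hence $\Ext^1_A(S_{i,n}, S_{i,n}) = 0 \leq 1$. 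The main obstacle is precisely this $\alpha_* = 0$ step: one must carefully unpack the four relations as matrix identities on the off-diagonal block $\alpha_*$ and keep track of the shapes of the "shift" structure maps of the preinjective/preprojective Kronecker modules.
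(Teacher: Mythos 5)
Your proposal is correct, and for parts (1), (2), (5), (6) it takes a genuinely different route from the paper. For (1) and (2) the paper writes down the minimal projective resolution $P_1\to P_2\xrightarrow{f_\lambda}P_1\to X_1(\lambda)\to 0$ (with $f_\lambda: e_2\mapsto \gamma-\lambda\beta$), applies $\Hom_A(-,X_1(\lambda'))$, and reads off $\Ext^1$ from whether the induced map $g$ vanishes; your block-matrix parametrization of extensions reaches the same dichotomy via the determinant $\pm(\lambda-\lambda')$ of the coboundary map (your displayed formula for that map has the roles of $\lambda$ and $\lambda'$ swapped relative to what I get from conjugating by unipotent block matrices, but the determinant is $\pm(\lambda-\lambda')$ either way, so nothing breaks). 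Part (3) is done by syzygies in both treatments, and (4) is identical. The real divergence is in (5) and (6): the paper compares the minimal projective resolution of $S$ over $A$ with the one over $\Bbbk K_2$ via a morphism of complexes and transports the bound $\dim\Ext^1_{\Bbbk K_2}(S,S)\leq 1$, which it obtains from $\fpdim(\Mod_{f.d.}-\Bbbk K_2)=1$ in Example \ref{xxex5.1}; you instead kill the off-diagonal block $\alpha_*$ directly from the relations \eqref{E5.5.2} (the key point being that $\ker\beta_0\cap\ker\gamma_0=0$ and $\mathrm{im}\,\beta_0+\mathrm{im}\,\gamma_0$ is everything for the preprojective/preinjective Kronecker modules), reducing to an extension of Kronecker modules, and then get exact vanishing from the Euler form $\langle(n,n+1),(n,n+1)\rangle=1=\dim\Hom$. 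Your argument thus proves the sharper statements (5') and (6') of Remark \ref{xxrem5.11}, which the paper only asserts would follow from ``a more careful analysis''; the cost is the bookkeeping with the rectangular blocks, which you correctly identify as the delicate step and which does go through as you describe.
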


\begin{remark}
\label{xxrem5.11}
In fact, one can show the following stronger version of
Lemma \ref{xxlem5.10}(5,6).
\begin{enumerate}
\item[(5')]
$\Ext^1_{A}(S_{2,n}, S_{2,n})=0$ for all $n$.
\item[(6')]
$\Ext^1_{A}(S_{1,n}, S_{1,n})=0$ for all $n$.
\end{enumerate}
\end{remark}

\begin{proof}[Proof of Lemma \ref{xxlem5.10}]
(1,2) Consider a minimal projective resolution of $X_1(\lambda)$
$$P_1\to P_2 \xrightarrow{f_{\lambda}} P_1\to X_1(\lambda)\to 0$$
where $f_{\lambda}$ sends $e_2\in P_2$ to $ \gamma-\lambda \beta\in P_1$.
More precisely, we have
$$\begin{pmatrix} \Bbbk e_1\\
\Bbbk \beta+\Bbbk \gamma\end{pmatrix}
\xrightarrow{e_1\to \alpha}
\begin{pmatrix} \Bbbk \alpha\\
\Bbbk e_2\end{pmatrix}
\xrightarrow{e_2\to \gamma-\lambda \beta}
\begin{pmatrix} \Bbbk e_1\\
\Bbbk \beta+\Bbbk \gamma\end{pmatrix}
\longrightarrow
P_1/(\Bbbk (\gamma-\lambda \beta))
\longrightarrow
0.$$
Applying $\Hom_{A}(-,X_1(\lambda'))$ to the truncated projective resolution
of the above, we obtain the following complex
$$ \Bbbk \xleftarrow{ \quad 0\quad } \Bbbk \xleftarrow{ \quad g\quad} \Bbbk
\longleftarrow 0.$$
If $g$ is zero, this is case (1). If $g\neq 0$, this is case (2).

(3) The proof is similar to the above by considering minimal projective 
resolutions of $S_1$ and $S_2$.

(4) This is clear since $P_2$ is a projective module. 

(5,6) Let $S$ be either $S_{2,n}$ or $S_{1,n}$. By Example \ref{xxex5.1},
$\fpdim (\Mod_{f.d}-\Bbbk K_2)=1$. This implies that 
$$\dim \Ext^1_{\Bbbk K_2}(S,S)\leq 1$$
where $S$ is considered as an indecomposable $K_2$-module.

Let us make a comment before we continue the proof. Following 
a more careful analysis, one can actually show that 
$$\Ext^1_{\Bbbk K_2}(S,S)=0.$$
Using this fact, the rest of the proof would show the 
assertions (5',6') in Remark \ref{xxrem5.11}.

Now we continue the proof.
There is a projective cover $P_1^b \xrightarrow{f} S$
so that $\ker f$ is a direct sum of finitely many copies of $S_2$. Since 
$P_2$ is the projective cover of $S_2$,
we have a minimal projective resolution
$$\longrightarrow P_2^{a} \longrightarrow P_1^{b} 
\longrightarrow S\longrightarrow 0$$
for some $a,b$. In the category $\Mod_{f.d}-\Bbbk K_2$, 
we have a minimal projective resolution of $S$
$$0\longrightarrow S_2^{a} \longrightarrow P_1^{b} 
\longrightarrow S\longrightarrow 0$$
where $S_2$ is a projective $\Bbbk K_2$-module. 
Hence we have a morphism of complexes
$$\begin{CD}
 @>>> P_2^{a} @>>> P_1^{b} @>>> S @>>> 0\\
@. @VVV @ VV =V @VV=V\\
0@>>> S_2^{a} @>>> P_1^{b} @>>> S @>>> 0
\end{CD}$$
Applying $\Hom_{A}(-,S)$ to above, we obtain that
{\small $$\begin{CD}
\cdots  @<<< \Hom_{A}(P_2^{a}, S) @<h<< \Hom_{A}(P_1^{b}, S) 
@<<< \Hom_{A}(S,S) @<<< 0\\
@. @A g AA @ AA =A @AA=A\\
\cdots @<<< \Hom_{A}(S_2^{a}, S) @<f<< \Hom_{A}(P_1^{b}, S) 
@<<< \Hom_{A}(S,S) @<<< 0\\
\end{CD}$$}
Note that $g$ is an isomorphism. Since $\dim \Ext^1_{\Bbbk K_2}(S,S)\leq 1$,
the cokernel of $f$ has dimension at most 1.
Since $g$ is an isomorphism, the cokernel of 
$h$ has dimension at most 1. This implies that
$\Ext^1_{A}(S,S)$ has dimension at most 1.
\end{proof}

\begin{proposition}
\label{xxpro5.12}
Let ${\mathfrak A}$ be the category $\Mod_{f.d.}-A$
where $A$ is in Example \ref{xxex5.5}. 
\begin{enumerate}
\item[(1)]
$\fpdim^n {\mathfrak A}=\begin{cases} \sqrt{2} & n=2,\\ 1 & n\neq 2.\end{cases}$ 
As a consequence, $\fpdim {\mathfrak A}=\sqrt{2}$.
\item[(2)]
$SI({\mathfrak A})=2$.
\item[(3)]
$\fpdim {\mathcal A}\geq \sqrt{2}$.
\end{enumerate}
\end{proposition}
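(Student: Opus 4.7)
The plan is to enumerate all brick subsets of $\mathfrak{A}$ using Lemma~\ref{xxlem5.9}, compute the spectral radii of their adjacency matrices with respect to $E^1=\Ext^1_{\mathfrak{A}}(-,-)$ using Lemma~\ref{xxlem5.10}, and then take suprema, tracking which $n$ gives which value.

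For $\fpdim^1\mathfrak{A}$, run through the singleton bricks in Lemma~\ref{xxlem5.8}. By Lemma~\ref{xxlem5.10}(3,4,5,6), $\dim\Ext^1_A(M,M)=0$ for $M\in\{S_1,S_2,P_2\}$, equals $1$ for $M=X_1(\lambda)$, and is at most $1$ for each string $S_{i,n}$. Hence $\fpdim^1\mathfrak{A}=1$, attained at any $X_1(\lambda)$. For $n\geq 2$, Lemma~\ref{xxlem5.9} forces $\phi$ to be either $\{S_1,S_2\}$ (possible only for $n=2$) or a set $\{X_1(\lambda_1),\dots,X_1(\lambda_n)\}$ with pairwise distinct parameters. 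In the first case, Lemma~\ref{xxlem5.10}(3) gives
\[
A(\phi,E^1)=\begin{pmatrix}0&2\\1&0\end{pmatrix},
\]
with characteristic polynomial $\lambda^2-2$, so $\rho(A)=\sqrt{2}$. In the second case, Lemma~\ref{xxlem5.10}(1,2) shows $A(\phi,E^1)=I_n$ and hence $\rho=1$. Combining the two cases yields $\fpdim^2\mathfrak{A}=\sqrt{2}$ and $\fpdim^n\mathfrak{A}=1$ for every $n\geq 3$.

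Taking the supremum gives $\fpdim\mathfrak{A}=\sqrt{2}$, proving (1). For (2), the maximum is achieved at $n=2$ and at no smaller index, so $SI(\mathfrak{A})=2$ under the natural abelian-category analog of the stabilization index (the smallest $n$ at which $\fpdim^n\mathfrak{A}=\fpdim\mathfrak{A}$). For (3), the canonical fully faithful embedding $\mathfrak{A}\hookrightarrow\mathcal{A}=D^b(\mathfrak{A})$ together with Lemma~\ref{xxlem2.9} (whose proof observes that $E^1$ on $\mathfrak{A}$ matches $\Sigma$ on $\mathcal{A}$ restricted to bricks in $\mathfrak{A}$) yields $\fpdim\mathcal{A}\geq\fpdim\mathfrak{A}=\sqrt{2}$.

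The real work is already packaged into Lemmas~\ref{xxlem5.8}--\ref{xxlem5.10}; beyond them the argument is organizational, and the single point where the irrational value $\sqrt{2}$ enters is the $2\times 2$ spectral-radius calculation for $\{S_1,S_2\}$. The only interpretive subtlety is the meaning of $SI$ for an abelian category, since the triangulated-category definition demands $\fpdim^{n'}=\fpdim$ for \emph{all} $n'\geq n$, a condition that fails here because $\fpdim^n\mathfrak{A}$ drops back to $1$ for $n\geq 3$; I take the natural replacement which asks only that the supremum be attained at $n$.
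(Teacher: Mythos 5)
Your proof is correct and is exactly the argument the paper intends: Proposition \ref{xxpro5.12}(1) is stated there as "a consequence of Lemmas \ref{xxlem5.9} and \ref{xxlem5.10}," and your case analysis (the set $\{S_1,S_2\}$ giving $\rho\left(\begin{smallmatrix}0&2\\1&0\end{smallmatrix}\right)=\sqrt{2}$, all other brick sets giving $\rho\leq 1$) together with Lemma \ref{xxlem2.9} for part (3) supplies precisely the omitted details. Your observation about $SI$ is also well taken: since $\fpdim^n{\mathfrak A}$ drops back to $1$ for $n\geq 3$, the literal definition of the stabilization index (requiring $\fpdim^{n'}=\fpdim$ for all $n'\geq n$) is not satisfied by any $n$, and the paper's claim $SI({\mathfrak A})=2$ only makes sense under the reading you adopt, namely that the supremum is first attained at $n=2$.
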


\begin{proof} (1) This is a consequence of Lemmas \ref{xxlem5.9} and
\ref{xxlem5.10}. Parts (2,3) follow from part (1).
\end{proof}

\begin{remark}
\label{xxrem5.13}
Let $A$ be the algebra given in Example \ref{xxex5.5}.
We list some facts, comments and questions.
\begin{enumerate}
\item[(1)]
The algebra $A$ is non-connected ${\mathbb N}$-graded Koszul.
\item[(2)]
The minimal projective resolutions of $S_1$ and $S_2$ are
$$ \cdots \longrightarrow P_1^{\oplus 4}
\longrightarrow P_2^{\oplus 4}
\longrightarrow P_1^{\oplus 2}
\longrightarrow P_2^{\oplus 2}
\longrightarrow P_1
\longrightarrow S_1
\longrightarrow 0,
$$
and
$$ \cdots \longrightarrow P_2^{\oplus 4}
\longrightarrow P_1^{\oplus 2}
\longrightarrow P_2^{\oplus 2}
\longrightarrow P_1
\longrightarrow P_2
\longrightarrow S_2
\longrightarrow 0.
$$
\item[(3)] 
For $i\geq 0$, we have
$$\begin{aligned}
\Ext^i_{A}(S_1,S_1)&=\begin{cases} \Bbbk^{\oplus 2^{i/2}} & 
i \; {\text{is}} \; {\text{even}}\\
0 & \; {\text{is}} \; {\text{odd}}
\end{cases}\\
\Ext^i_{A}(S_1,S_2)&=\begin{cases} 0  & 
i \; {\text{is}} \; {\text{even}}\\
\Bbbk^{\oplus 2^{(i+1)/2}} & \; {\text{is}} \; {\text{odd}}
\end{cases}\\
\Ext^i_{A}(S_2,S_2)&=\begin{cases} \Bbbk^{\oplus 2^{i/2}} & 
i \; {\text{is}} \; {\text{even}}\\
0 & \; {\text{is}} \; {\text{odd}}
\end{cases}\\
\Ext^i_{A}(S_2,S_1)&=\begin{cases} 0 & 
i \; {\text{is}} \; {\text{even}}\\
\Bbbk^{\oplus 2^{(i-1)/2}} & \; {\text{is}} \; {\text{odd.}}
\end{cases}
\end{aligned}
$$
\item[(4)]
One can check that every algebra of dimension 4 or less has 
either infinite or integral $\fpdim$. Hence,
$A$ is an algebra of smallest $\Bbbk$-dimension that 
has finite non-integral (or irrational) $\fpdim$. It is unknown 
if there is a finite dimensional algebra $A$ such that 
$\fpdim (\Mod_{f.d.}-A)$ is transcendental.
\item[(5)]
Several authors have studied the connection between tame-wildness and 
complexity \cite{BS, ES, Fa, FW, Kul, Ri}. The algebra $A$ is probably the first 
explicit example of a tame algebra with infinite complexity. 
\item[(6)]
It follows from part (3) that
the fp-curvature of ${\mathcal A}:=D^b(\Mod_{f.d.}-A)$ is 
$\sqrt{2}$ (some details are omitted). 
As a consequence, $\fpg({\mathcal A})=\infty$.
By Theorem \ref{xxthm8.3}, the complexity of $A$ is $\infty$.
We don't know what $\fpdim {\mathcal A}$ is.
\end{enumerate}
\end{remark}

\section{$\sigma$-decompositions}
\label{xxsec6}

We fix a category ${\mathcal C}$ and an endofunctor 
$\sigma$. For a set of bricks $B$ in ${\mathcal C}$ 
(or a set of atomic objects when ${\mathcal C}$ is 
triangulated), we define 
$$\fpdim^n\mid_{B} (\sigma) =\sup\{ \rho(A(\phi,\sigma)) \;
\mid \; \phi:=\{X_1,\cdots,X_n\}\in \Phi_{n,b}, 
\;\; {\rm{and}} \;\; X_i\in B\;\; \forall i\}.$$

Let $\Lambda:=\{\lambda\}$ be a totally ordered set. We say 
a set of bricks $B$ in ${\mathcal C}$ has a 
{\it $\sigma$-decomposition} $\{B^{\lambda}\}_{\lambda\in \Lambda}$
(based on $\Lambda$) if the following holds.
\begin{enumerate}
\item[(1)]
$B$ is a disjoint union $\bigcup_{\lambda\in \Lambda} B^{\lambda}$.
\item[(2)]
If $X\in B^{\lambda}$ and $Y\in B^{\delta}$
with $\lambda<\delta$, $\Hom_{\mathcal C}(X,\sigma(Y))=0$.
\end{enumerate}

The following lemma is easy.

\begin{lemma}
\label{xxlem6.1} Let $n$ be a positive integer.
Suppose that $B$ has a 
{\it $\sigma$-decomposition} $\{B^{\lambda}\}_{\lambda\in \Lambda}$.
Then 
$$\fpdim^n|_{B}(\sigma)\leq \sup_{\lambda \in \Lambda, m\leq n} 
\{\fpdim^m|_{B^{\lambda}}(\sigma)\}.$$
\end{lemma}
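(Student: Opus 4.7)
The plan is to reduce the computation of $\rho(A(\phi,\sigma))$ for an arbitrary brick set $\phi\subseteq B$ to a block-triangular form induced by the decomposition $\{B^{\lambda}\}$, and then extract the spectral radii of the diagonal blocks.

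First I would fix $\phi=\{X_1,\ldots,X_n\}\in\Phi_{n,b}$ with all $X_i\in B$. Each $X_i$ lies in a unique $B^{\lambda_i}$, so after relabeling we may assume the indices are ordered so that $\lambda_1\leq\lambda_2\leq\cdots\leq\lambda_n$. This reordering only conjugates the adjacency matrix $A(\phi,\sigma)$ by a permutation matrix, hence does not change its spectral radius.

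Next I would invoke condition (2) of the $\sigma$-decomposition: whenever $\lambda_i<\lambda_j$ we have $\Hom_{\mathcal C}(X_i,\sigma(X_j))=0$, so the $(i,j)$-entry $a_{ij}$ of $A(\phi,\sigma)$ vanishes. With the chosen ordering, this means $A(\phi,\sigma)$ is block lower triangular, with the diagonal blocks indexed by the distinct values of $\lambda$ that appear among $\lambda_1,\ldots,\lambda_n$. Writing $\phi^{\lambda}:=\phi\cap B^{\lambda}$ and $m_{\lambda}:=|\phi^{\lambda}|\leq n$, each diagonal block is exactly $A(\phi^{\lambda},\sigma)$. Since the spectral radius of a block triangular matrix is the maximum of the spectral radii of its diagonal blocks,
\[
\rho(A(\phi,\sigma))=\max_{\lambda}\rho(A(\phi^{\lambda},\sigma)).
\]

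Finally, since each $\phi^{\lambda}$ is a brick set entirely contained in $B^{\lambda}$ of size $m_{\lambda}\leq n$, we have $\rho(A(\phi^{\lambda},\sigma))\leq \fpdim^{m_{\lambda}}|_{B^{\lambda}}(\sigma)\leq \sup_{\lambda\in\Lambda,\,m\leq n}\fpdim^{m}|_{B^{\lambda}}(\sigma)$. Taking the supremum over all $\phi\in\Phi_{n,b}$ with $\phi\subseteq B$ yields the desired inequality. The argument is entirely elementary; the only point worth verifying carefully is that the ordering indeed produces a block lower triangular (not upper triangular) form, which is a direct consequence of how $a_{ij}$ is defined and the direction of the inequality $\lambda<\delta$ in condition (2).
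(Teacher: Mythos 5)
Your proof is correct and follows essentially the same route as the paper's: order the objects by the $\Lambda$-grading, observe that condition (2) forces the adjacency matrix into block lower triangular form with diagonal blocks $A(\phi\cap B^{\lambda},\sigma)$, and bound each block's spectral radius by $\fpdim^{m}|_{B^{\lambda}}(\sigma)$. Your explicit check that the ordering yields the lower (rather than upper) triangular form is a nice touch but does not change the argument.
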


\begin{proof} Let $\phi$ be a brick set that is used in the 
computation of $\fpdim^n|_{B}(\sigma)$. Write 
\begin{equation}
\label{E6.1.1}\tag{E6.1.1}
\phi=\phi_{\lambda_{1}} \cup \cdots \cup \phi_{\lambda_{s}}
\end{equation}
where $\lambda_i$ is strictly increasing and 
$\phi_{\lambda_i}=\phi\cap B^{\lambda_i}$. 
For any objects $X\in \phi^{\lambda_i}$ and $Y\in \phi^{\lambda_j}$,
where $\lambda_i<\lambda_j$, by definition, $\Hom_{\mathcal C}(X,\sigma(Y))=0$.
Listing the objects in $\phi$ in the order that suggested by \eqref{E6.1.1},
then the adjacency matrix of $(\phi, \sigma)$ is of the form
\begin{equation}
\notag
A(\phi, \sigma)=\begin{pmatrix} A_{11} & 0      &0 & \cdots &0\\
                                 \ast   & A_{22} &0 &\cdots & 0\\
			       \ast &\ast &A_{33} &\cdots &0\\
			     \ldots &\ldots &\ldots &\ldots &0\\
			   \ast& \ast&\ast &\cdots & A_{ss}
		       \end{pmatrix}
\end{equation}
where each $A_{ii}$ is the adjacency matrix $A(\phi_{\lambda_i},\sigma)$. 
By definition,
$$\rho(A_{ii})\leq \fpdim^{s_i}\mid_{B^{\lambda_{i}}} (\sigma)$$
where $s_i$ is the size of $A_{ii}$, which is no more than 
$n$. Therefore
$$\rho(A(\phi,\sigma))= \max_{i} \{\rho(A_{ii})\} 
\leq \sup_{\lambda \in \Lambda, m\leq n} 
\{\fpdim^m|_{B^{\lambda}}(\sigma)\}.$$
The assertion follows. 
\end{proof}

We give some examples of $\sigma$-decompositions.

\begin{example}
\label{xxex6.2}
Let ${\mathfrak A}$ be an abelian category and 
${\mathcal A}$ be the derived category $D^b({\mathfrak A})$.
Let $[n]$ be the $n$-fold suspension $\Sigma^n$. 
\begin{enumerate}
\item[(1)]
Suppose that $\alpha$ is an endofunctor of ${\mathfrak A}$ 
and $\overline{\alpha}$ is the induced endofunctor 
of ${\mathcal A}$. For each $n\in {\mathbb Z}$, 
let $B^n:=\{ M[-n] \mid M\;\; {\rm{is}} \; {\rm{a}} \; 
{\rm{brick}} \; {\rm{in}} \; {\mathfrak A}\}$ and 
$B:=\bigcup_{n\in {\mathbb Z}} B^n$. If $M_i[-n_i] \in B^{n_i}$, 
for $i=1,2$, such that $n_1<n_2$, then 
$$\Hom_{\mathcal A}(M_1[-n_1], \overline{\alpha}(M_2[-n_2]))=
\Ext_{\mathfrak A}^{n_1-n_2}(M_1,\alpha(M_2))=0.$$
Then $B$ has a $\overline{\alpha}$-decomposition 
$\{B^n\}_{n\in {\mathbb Z}}$ based on ${\mathbb Z}$. 
\item[(2)]
Suppose $g:=\gldim {\mathfrak A}<\infty$. Let $\sigma$ be
the functor $\Sigma^g \circ \overline{\alpha}$.
For each $n\in {\mathbb Z}$, 
let $B^n:=\{ M[n] \mid M\;\; {\rm{is}} \; {\rm{a}} \; 
{\rm{brick}} \; {\rm{in}} \; {\mathfrak A}\}$ and 
$B:=\bigcup_{n\in {\mathbb Z}} B^n$. If $M_i[n_i] \in B^{n_i}$, 
for $i=1,2$, such that $n_1<n_2$, then 
$$\Hom_{\mathcal A}(M_1[n_1], \sigma(M_2[n_2]))=
\Ext_{\mathfrak A}^{n_2-n_1+g}(M_1,\alpha(M_2))=0.$$
Then $B$ has a $\sigma$-decomposition 
$\{B^n\}_{n\in {\mathbb Z}}$ based on ${\mathbb Z}$. 
\end{enumerate}
\end{example}

\begin{example}
\label{xxex6.3}
Let $C$ be a smooth projective curve and let ${\mathfrak A}$
be the the category of coherent sheaves over $C$. Every
coherent sheaf over $C$ is a direct sum of a torsion subsheaf 
and a locally free subsheaf. Define
$$\begin{aligned}
B^0&=\{T\;\; {\rm{is}}\;\; {\rm{a}}\;\; {\rm{torsion}}\;\;
{\rm{brick}}\;\; {\rm{object}}\;\; {\rm{in}}\;\; {\mathfrak A}\},\\
B^{-1}&=\{F\;\; {\rm{is}}\;\; {\rm{a}}\;\; {\rm{locally}}\;{\rm{free}}\;\;
{\rm{brick}}\;\; {\rm{object}}\;\; {\rm{in}}\;\; {\mathfrak A}\}
\end{aligned}
$$
and
$$B=B^{-1}\cup B^{0}.$$
Let $\sigma$ be the functor $E^1:=\Ext^1_{\mathfrak A}(-,-)$.
If $F\in B^{-1}$ and $T\in B^0$, then 
$$\Ext^1_{\mathfrak A}(F,T)=0.$$
Hence, $B$ has an $E^1$-decomposition based on the totally ordered set 
$\Lambda:=\{-1,0\}$.
\end{example}

The next example is given in \cite{BB}.

\begin{example}
\label{xxex6.4}
Let $C$ be an elliptic curve. Let ${\mathfrak A}$
be the the category of coherent sheaves over $C$ and 
${\mathcal A}$ be the derived category $D^b({\mathfrak A})$.

First we consider coherent sheaves.
Let $\Lambda$ be the totally ordered set ${\mathbb Q}\cup\{+\infty\}$.
The slope of a coherent sheaf $X\neq 0$ \cite[Definition 4.6]{BB}
is defined to be
$$\mu(X):=\frac{\chi(X)}{\rk (X)}\in \Lambda$$
where $\chi(X)$ is the Euler characteristic of $X$ and $\rk(X)$ 
is the rank of $X$. If $X$ and $Y$ are bricks such that 
$\mu(X)<\mu(Y)$, by \cite[Corollary 4.11]{BB}, $X$ and
$Y$ are semistable, and thus by \cite[Proposition 4.9(1)]{BB},
$\Hom_{\mathfrak A}(Y,X)=0$. By Serre duality (namely, Calabi-Yau property),
\begin{equation}
\label{E6.4.1}\tag{E6.4.1}
\Hom_{\mathcal A}(X, Y[1])=
\Ext^1_{\mathfrak A}(X,Y)=\Hom_{\mathfrak A}(Y,X)^*=0.
\end{equation}
Write $B=\Phi_{1,b}({\mathfrak A})$ and $B^{\lambda}$ be the
set of (semistable) bricks with slope $\lambda$. Then 
$B=\bigcup_{\lambda\in \Lambda} B^{\lambda}$. By \eqref{E6.4.1},
$\Ext^1_{\mathfrak A}(X,Y)=0$ when $X\in B^{\lambda}$ and 
$Y\in B^{\nu}$ with $\lambda<\nu$. Hence $B$ has 
an $E^1$-decomposition. By Lemma \ref{xxlem6.1}, for every
$n\geq 1$,
$$\fpdim^n(E^1)=\fpdim^n|_{B}(E^1)\leq \sup_{\lambda \in \Lambda, m\leq n} 
\{\fpdim^m|_{B^{\lambda}}(E^1)\}.$$

Next we compute $\fpdim^n|_{B^{\lambda}}(E^1)$. 
Let $SS^{\lambda}$ be the full subcategory of ${\mathfrak A}$
consisting of semistable coherent sheaves of slope $\lambda$. 
By \cite[Summary]{BB}, $SS^{\lambda}$ is an abelian
category that is equivalent to $SS^{\infty}$. Therefore 
one only needs to compute $\fpdim^n|_{B^{\infty}}(E^1)$ in the 
category $SS^{\infty}$. Note that $SS^{\infty}$ is the abelian
category of torsion sheaves and every brick object in 
$SS^{\infty}$ is of the form ${\mathcal O}_{p}$ for some 
$p\in C$. In this case, $A(\phi,E^1)$
is the identity matrix. Consequently, $\rho(A(\phi,E^1))=1$.
This shows that $\fpdim^n|_{B^{\lambda}}(E^1)
=\fpdim^n|_{B^{\infty}}(E^1)=1$ for all $n\geq 1$. It is clear
that $\fpdim^n(E^1)\geq \fpdim^n|_{B^{\infty}}(E^1)=1$. 
Combining with Lemma \ref{xxlem6.1}, we obtain that
$\fpdim^n(E^1)=1$ for all $n$. (The above approach works 
for functors other than $E^1$.)

Finally we consider the fp-dimension for the derived category
${\mathcal A}$. It follows from Theorem \ref{xxthm3.5}(3) that
$$\fpdim^n(\Sigma)=\fpdim^n(E^1)=1$$
for all $n\geq 1$. By definition,
$$\fpdim({\mathcal A})=\fpdim({\mathfrak A})=1.$$
\end{example}

As we explained before $\fpdim$ is an indicator of the representation
types of categories. 

Drozd-Greuel studied a tame-wild dichotomy for vector bundles 
on projective curves \cite{DrG} and introduced the notion of 
VB-finite, VB-tame and VB-wild similar to the corresponding notion 
in the representation theory of finite dimensional algebras. 
In \cite{DrG} Drozd-Greuel showed the following:

Let $C$ be a connected smooth projective curve. Then 
\begin{enumerate}
\item[(a)]
$C$ is VB-finite if and only if $C$ is ${\mathbb P}^1$.
\item[(b)]
$C$ is VB-tame if and only if $C$ is elliptic (that is, of genus 1).
\item[(c)]
$C$ is VB-wild if and only if $C$ has genus $g\geq 2$.
\end{enumerate}

We now prove a fp-version of Drozd-Greuel's result 
\cite[Theorem 1.6]{DrG}. We thank Max Lieblich for 
providing ideas in the proof of Proposition \ref{xxpro6.5}(3)
next.

\begin{proposition}
\label{xxpro6.5} 
Suppose $\Bbbk={\mathbb C}$.
Let ${\mathbb X}$ be a connected smooth projective curve
and let $g$ be the genus of ${\mathbb X}$.
\begin{enumerate}
\item[(1)]
If $g=0$ or ${\mathbb X}={\mathbb P}^1$, then $\fpdim\; D^b(coh(\mathbb X))=1.$
\item[(2)]
If $g=1$ or ${\mathbb X}$ is an elliptic curve, then $\fpdim\; D^b(coh(\mathbb X))=1.$
\item[(3)]
If $g\geq 2$, then $\fpdim \; D^b(coh(\mathbb X))=\infty$.
\end{enumerate}
\end{proposition}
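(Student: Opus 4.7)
The plan is to reduce each statement to a computation in the abelian category $coh({\mathbb X})$. Since $coh({\mathbb X})$ is hereditary for any smooth projective curve $\mathbb X$, Theorem \ref{xxthm3.5}(4) gives $\fpdim\, D^b(coh({\mathbb X})) = \fpdim\, coh({\mathbb X}) = \fpdim(E^1)$, so throughout it suffices to bound the spectral radius of $E^1$-adjacency matrices on brick sets in $coh({\mathbb X})$. For parts (1) and (2), over an algebraically closed field a smooth projective curve of genus $0$ is isomorphic to ${\mathbb P}^1$ and a smooth projective curve of genus $1$ carries the structure of an elliptic curve once a base point is chosen, so these reduce verbatim to Example \ref{xxex5.1} and Example \ref{xxex6.4}, both of which have already been shown to yield $\fpdim = 1$.

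The substantive case is (3). My plan is to exhibit, for every $n$, a brick set $\phi_n \in \Phi_{n,b}(coh({\mathbb X}))$ whose adjacency matrix has spectral radius growing linearly in $n$. Since $g \geq 2$, the Jacobian $\operatorname{Jac}({\mathbb X})$ is a positive-dimensional variety, so I can pick $n$ pairwise non-isomorphic line bundles $L_1, \ldots, L_n$ of a fixed common degree $d$. Each $L_i$ is simple, and for $i \neq j$ the sheaf $L_i^{-1}\otimes L_j$ is a non-trivial line bundle of degree $0$ on $\mathbb X$, so $\Hom(L_i, L_j) = H^0(L_i^{-1}\otimes L_j) = 0$. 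Hence $\phi_n := \{L_1,\ldots,L_n\}$ is indeed a brick set.

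Next I would apply Riemann-Roch to $L_i^{-1}\otimes L_j$ (which has rank $1$ and degree $0$) to get $h^0 - h^1 = 1 - g$. For $i \neq j$ this gives $\dim\Ext^1(L_i, L_j) = H^1(L_i^{-1}\otimes L_j) = g - 1$, and for $i = j$ it gives $\dim\Ext^1(L_i, L_i) = g$. Thus
\[
A(\phi_n, E^1) = (g-1)\,J_n + I_n,
\]
where $J_n$ denotes the $n\times n$ all-ones matrix and $I_n$ the identity. The eigenvalues of this matrix are $n(g-1)+1$ (with eigenvector $\mathbf{1}$) and $1$ (with multiplicity $n-1$), so $\rho(A(\phi_n, E^1)) = n(g-1) + 1$. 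Since $g - 1 \geq 1$, letting $n \to \infty$ forces $\fpdim\, coh({\mathbb X}) = \infty$, as desired.

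The main obstacle is conceptual rather than technical: one must recognize that line bundles of a fixed degree on a curve of genus $\geq 2$ already form an arbitrarily large brick set, and that the hereditary reduction from $D^b(coh({\mathbb X}))$ to $coh({\mathbb X})$ is legitimate. Once these points are identified, both the brick-set verification and the spectral-radius calculation are elementary consequences of Serre duality and Riemann-Roch. A remark worth including is that for $g = 1$ the formula $n(g-1)+1 = 1$ degenerates, which is consistent with part (2); this explains why the ``wildness'' threshold sits precisely at $g = 2$.
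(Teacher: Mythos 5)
Your proposal is correct and follows essentially the same route as the paper: parts (1) and (2) reduce to Example \ref{xxex5.1} and Example \ref{xxex6.4}, and part (3) reduces to $coh({\mathbb X})$ via Theorem \ref{xxthm3.5}(4) and then exhibits $n$ line bundles of a fixed degree forming a brick set whose $E^1$-adjacency matrix, computed by Riemann--Roch, is $(g-1)J_n+I_n$ with spectral radius $n(g-1)+1\to\infty$. The only cosmetic difference is that the paper takes ${\mathcal E}_i={\mathcal O}(x_i)$ for distinct points and cites Drozd--Greuel for the $\Hom$-vanishing, whereas you verify it directly from the fact that a non-trivial degree-zero line bundle has no global sections.
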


\begin{proof} (1)
The assertion follows from \eqref{E5.1.4}.

(2) 
The assertion follows from Example \ref{xxex6.4}.

(3) By Theorem \ref{xxthm3.5}(4), $\fpdim(D^b(coh({\mathbb X})))
=\fpdim(coh({\mathbb X}))$. Hence it suffices to show that 
$\fpdim(coh({\mathbb X}))=\infty$.

For each $n$, let $\{x_i\}_{i=1}^n$ be a set of $n$ distinct points 
on ${\mathbb X}$. By \cite[Lemma 1.7]{DrG}, we might further assume that
$2x_i\not\sim x_j+x_k$ for all $i\neq j$, as divisors on ${\mathbb X}$.
Write $\mathcal{E}_i:={\mathcal O}(x_i)$ for all $i$. 
By \cite[p.11]{DrG}, $\Hom_{{\mathcal O}_{\mathbb X}}
(\mathcal{E}_i, \mathcal{E}_j)=0$ for all
$i\neq j$, which is also a consequence of a more general result 
\cite[Proposition 1.2.7]{HL}. It is clear that 
$\Hom_{{\mathcal O}_{\mathbb X}}(\mathcal{E}_i, 
\mathcal{E}_i)=\Bbbk$ for all $i$. Let $\phi_n$ be the set 
$\{\mathcal{E}_1, \ldots, \mathcal{E}_n\}$. Then it is 
a brick set of non-isomorphic vector bundles on 
${\mathbb X}$ (which are stable with 
${\text{rank}}(\mathcal{E}_i)=\deg(\mathcal{E}_i)=1$ 
for all $i$).

Define the sheaf $\mathcal{H}_{ij}=
\mathcal{H}om(\mathcal{E}_i, \mathcal{E}_j)$ for all $i,j$. 
Then $\deg(\mathcal{H}_{ij})=0.$ By the Riemann-Roch Theorem, we have
\begin{align*}
0 &= \deg(\mathcal{H}_{ij}) \\
&= \chi(\mathcal{H}_{ij}) - {\text{rank}}(\mathcal{H}_{ij})
\chi(\mathcal{O}_{\mathbb X}) \\
&= \dim \Hom_{\mathcal{O}_{\mathbb X}}(\mathcal{E}_i, \mathcal{E}_j) 
- \dim \Ext^1_{\mathcal{O}_{\mathbb X}}(\mathcal{E}_i, \mathcal{E}_j) 
- (1-g) \\
&= \delta_{ij} - \dim \Ext^1_{\mathcal{O}_{\mathbb X}}(\mathcal{E}_i, \mathcal{E}_j) 
+ (g-1),
\end{align*}
which implies that $\dim \Ext^1_{\mathcal{O}_{\mathbb X}}(\mathcal{E}_i, \mathcal{E}_j) 
=g-1+\delta_{ij}$. This formula was also  given in 
\cite[p.11 before Lemma 1.7]{DrG} when $i\neq j$.

Define the matrix $A_n$ with entries $a_{ij}:=\dim 
\Ext^1_{\mathcal{O}_{\mathbb X}}(\mathcal{E}_i, \mathcal{E}_j)
=g-1+\delta_{ij}$, which is the adjacency matrix of $(\phi_n, E^1)$. 
This matrix has entries $g$ along the diagonal and 
entries $g-1$ everywhere else. Therefore the vector $(1, \ldots, 1)$ is an 
eigenvector for this matrix with eigenvalue $n(g-1)+1$. So $\rho(A_n) \geq n(g-1)+1 
\geq n+1$. Since we can define $\phi_n$ for arbitrarily large $n$, we must have 
$\fpdim(coh({\mathbb X}))=\infty.$ 
\end{proof}

\begin{question}
\label{xxque6.6}
Let ${\mathbb X}$ be a smooth irreducible projective curve of genus $g\geq 2$.
Is $\fpdim^n({\mathbb X})$ finite for each $n$? If yes, 
do these invariants recover $g$?
\end{question}

\begin{proposition}
\label{xxpro6.7}
Suppose $\Bbbk={\mathbb C}$.
Let ${\mathbb Y}$ be a smooth projective scheme of dimension at least 2.
Then 
$$\fpdim^1(coh({\mathbb Y}))=\fpdim(coh({\mathbb Y}))
=\fpdim^1(D^b(coh({\mathbb Y})))=
\fpdim(D^b(coh({\mathbb Y})))=\infty.$$
\end{proposition}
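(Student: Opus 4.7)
Plan: By Lemma \ref{xxlem2.9}, $\fpdim^1(coh(\mathbb{Y})) \leq \fpdim^1(D^b(coh(\mathbb{Y})))$ and $\fpdim(coh(\mathbb{Y})) \leq \fpdim(D^b(coh(\mathbb{Y})))$; moreover any brick $X \in coh(\mathbb{Y})$ is atomic when viewed in $D^b(coh(\mathbb{Y}))$ because $\Hom_{D^b}(X, X[-i]) = \Ext^{-i}_{\mathcal{O}_\mathbb{Y}}(X,X) = 0$ for $i > 0$, so the singleton $\{X\}$ is an admissible test set in either context and contributes the same value $\dim \Ext^1_{\mathcal{O}_\mathbb{Y}}(X,X)$. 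Combined with the trivial bound $\fpdim^1 \leq \fpdim$, all four quantities are sandwiched between $\fpdim^1(coh(\mathbb{Y}))$ and $+\infty$. So the whole proposition reduces to exhibiting a sequence of bricks $X_m \in coh(\mathbb{Y})$ with $\dim \Ext^1_{\mathcal{O}_\mathbb{Y}}(X_m, X_m) \to \infty$.

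The construction is geometric and uses smooth embedded curves of arbitrarily large genus. Fix a very ample line bundle $\mathcal{O}_\mathbb{Y}(1)$ and, for each $m \geq 1$, form the intersection $C_m$ of $\mathbb{Y}$ with $\dim \mathbb{Y} - 1$ sufficiently general hypersurfaces of degree $m$. By iterated Bertini (in characteristic zero) together with Hartshorne's connectedness theorem for ample complete intersections, $C_m$ is a smooth irreducible projective curve in $\mathbb{Y}$; applying adjunction inductively exhibits its genus $g_m$ as a polynomial in $m$ of positive degree, hence $g_m \to \infty$. Let $i_m \colon C_m \hookrightarrow \mathbb{Y}$ be the closed embedding and set $X_m := (i_m)_* \mathcal{O}_{C_m}$. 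Because $X_m$ is annihilated by the ideal sheaf $\mathcal{I}_{C_m}$, every $\mathcal{O}_\mathbb{Y}$-linear endomorphism of $X_m$ is automatically $\mathcal{O}_{C_m}$-linear, so
\[
\End_{\mathcal{O}_\mathbb{Y}}(X_m) \;=\; \End_{\mathcal{O}_{C_m}}(\mathcal{O}_{C_m}) \;=\; \Bbbk,
\]
and $X_m$ is a brick. Since $(i_m)_*$ is exact, pushing an $\mathcal{O}_{C_m}$-extension $0 \to \mathcal{O}_{C_m} \to \mathcal{E} \to \mathcal{O}_{C_m} \to 0$ forward yields an $\mathcal{O}_\mathbb{Y}$-extension $0 \to X_m \to (i_m)_*\mathcal{E} \to X_m \to 0$, giving a natural map
\[
\Ext^1_{\mathcal{O}_{C_m}}(\mathcal{O}_{C_m}, \mathcal{O}_{C_m}) \longrightarrow \Ext^1_{\mathcal{O}_\mathbb{Y}}(X_m, X_m).
\]
This map is injective: any $\mathcal{O}_\mathbb{Y}$-linear splitting of a pushed-forward extension has both source and target annihilated by $\mathcal{I}_{C_m}$, hence is automatically $\mathcal{O}_{C_m}$-linear and descends to a splitting on $C_m$. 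Therefore
\[
\dim \Ext^1_{\mathcal{O}_\mathbb{Y}}(X_m, X_m) \;\geq\; \dim \Ext^1_{\mathcal{O}_{C_m}}(\mathcal{O}_{C_m}, \mathcal{O}_{C_m}) \;=\; \dim H^1(C_m, \mathcal{O}_{C_m}) \;=\; g_m,
\]
which goes to $\infty$ with $m$, yielding $\fpdim^1(coh(\mathbb{Y})) = \infty$ and finishing the reduction.

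The main obstacle is the geometric input of Step 1: producing smooth connected projective curves in an arbitrary smooth projective $\mathbb{Y}$ of dimension $\geq 2$ with genus tending to infinity. For surfaces this is a one-step Bertini plus adjunction; in higher dimensions one iterates, and must invoke Kodaira vanishing (or Hartshorne's connectedness theorem) to guarantee that each intermediate complete-intersection slice remains connected before cutting down to a curve. Once such a family $\{C_m\}$ is in hand the remaining verifications (brick property of $X_m$ and injectivity of pushforward on $\Ext^1$) are formal. As a cleaner alternative in the presence of a single curve $C \subseteq \mathbb{Y}$ of genus $\geq 2$, one could instead use high-rank stable bundles on $C$, where $\dim \Ext^1_C(E,E) = 1 + r^2(g-1)$ grows with the rank, but producing even one such $C$ requires the same Bertini-type argument.
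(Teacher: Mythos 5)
Your proof is correct and follows essentially the same route as the paper: reduce everything to showing $\fpdim^1(coh({\mathbb Y}))=\infty$, then test on singletons $\{i_*\mathcal{O}_C\}$ for embedded curves $C\subseteq{\mathbb Y}$ of unbounded genus, using $\dim\Ext^1_{\mathcal{O}_{\mathbb Y}}(i_*\mathcal{O}_C,i_*\mathcal{O}_C)\geq\dim H^1(C,\mathcal{O}_C)=g$. The only differences are that you construct the high-genus curves explicitly via complete intersections and Bertini where the paper simply cites the literature for their existence, and you spell out the injectivity of the pushforward map on $\Ext^1$ that the paper leaves implicit; both are sound elaborations rather than a different argument.
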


\begin{proof} It is clear that $\fpdim^1(coh({\mathbb Y}))$
is smallest among these four invariants. It suffices to show
that $\fpdim^1(coh({\mathbb Y}))=\infty$.

It is well-known that ${\mathbb Y}$ contains an irreducible
projective curve ${\mathbb X}$ of arbitrarily large 
(either geometric or arithmetic) genus, see, for example, 
\cite[Theorem 0.1]{CFZ} or \cite[Theorems 1 and 2]{Ch}.
Let ${\mathcal O}_{\mathbb X}$ be the coherent
sheaf corresponding to the curve ${\mathbb X}$ and
let $g$ be the arithmetic genus of ${\mathbb X}$.
In the abelian category
$coh({\mathbb X})$, we have
$$\dim \Ext^1_{\mathcal{O}_{\mathbb X}}
(\mathcal{O}_{\mathbb X}, \mathcal{O}_{\mathbb X})
=\dim H^1({\mathbb X}, \mathcal{O}_{\mathbb X})=g.$$
Since $coh({\mathbb X})$ is a full subcategory of 
$coh({\mathbb Y})$, we have
$$\dim \Ext^1_{\mathcal{O}_{\mathbb Y}}
(\mathcal{O}_{\mathbb X}, \mathcal{O}_{\mathbb X})
\geq 
\dim \Ext^1_{\mathcal{O}_{\mathbb X}}
(\mathcal{O}_{\mathbb X}, \mathcal{O}_{\mathbb X})
=g.$$
By taking $\phi=\{\mathcal{O}_{\mathbb X}\}$, one sees 
that $\fpdim^1(coh({\mathbb Y}))\geq \fpdim^1(coh({\mathbb X}))
\geq g$ for all such ${\mathbb X}$. Since $g$ can be
arbitrarily large, the assertion follows.
\end{proof}

\section{Representation types}
\label{xxsec7}
\subsection{Representation types}
\label{xxsec7.1}

We first recall some known definitions and results.

\begin{definition}
\label{xxdef7.1}
Let $A$ be a finite dimensional algebra.
\begin{enumerate}
\item[(1)]
We say $A$ is of {\it finite representation type} if there are 
only finitely many isomorphism classes of finite dimensional 
indecomposable left $A$-modules. 
\item[(2)]
We say $A$ is {\it tame} or {\it of tame representation type} 
if it is not of finite representation type, and 
for every $n\in {\mathbb N}$, all but finitely many isomorphism 
classes of $n$-dimensional indecomposables occur in a finite 
number of one-parameter families.
\item[(3)]
We say $A$ is {\it wild} or {\it of wild representation type} 
if, for every finite dimensional $\Bbbk$-algebra 
$B$, the representation theory of $B$ can be embedded into 
that of $A$.
\end{enumerate}
\end{definition}

The following is the famous trichotomy result due to Drozd \cite{Dr1}.

\begin{theorem}[Drozd's trichotomy theorem] 
\label{xxthm7.2}
Every finite dimensional algebra is either of 
finite, tame, or wild representation type.
\end{theorem}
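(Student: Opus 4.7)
The plan is to follow Drozd's original strategy, since this theorem is a deep result whose full proof would take us far outside the fp-theoretic setup developed in this paper. First I would verify that the three representation types are pairwise mutually exclusive. Finite type is disjoint from tame by definition of tame. The incompatibility of tame and wild is the substantive point: if $A$ were wild, one could embed the module category of $\Bbbk\langle x,y\rangle$ into $\mmod$-$A$, producing, for each $n$, a $2$-parameter family of pairwise non-isomorphic $n$-dimensional indecomposables. Such families cannot be covered by finitely many $1$-parameter families in any fixed dimension, contradicting tameness.

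The existence half is the hard part. The strategy is to translate the classification problem for finite dimensional $A$-modules into a matrix problem, or more precisely into the representation theory of a \emph{bocs} (a bimodule category over a differential graded $\Bbbk$-category) $\mathfrak{B}_A$ canonically attached to $A$. One then analyses $\mathfrak{B}_A$ via a sequence of ``reductions'' (edge reduction, loop reduction, regularization, unravelling), each of which replaces $\mathfrak{B}_A$ by a smaller or equivalent bocs while preserving its representations up to a controlled equivalence. The trichotomy emerges from the behaviour of this reduction process: if every reduction sequence terminates, one obtains only finitely many indecomposables and $A$ is of finite type; if the process stabilizes into a bocs whose indecomposables in each dimension are parameterized by finitely many one-parameter families (the so-called ``tame bocses''), then $A$ is tame; and otherwise, at some stage a free subalgebra on two generators appears, giving a full embedding of the representation theory of $\Bbbk\langle x,y\rangle$, and $A$ is wild.

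The main obstacle is the precise bookkeeping of the reductions. One must verify that tameness or wildness at a reduced stage pulls back faithfully to $A$, that the relevant invariants (dimension vectors, number of parameters) behave predictably, and above all that the dichotomy ``tame-or-wild'' at the bocs level is genuinely exhaustive. This last step requires the full machinery of matrix problems in characteristic unrestricted form and is the technical heart of \cite{Dr1}. In the present paper it is most natural simply to quote Theorem \ref{xxthm7.2} as a black box: it serves only to motivate the fp-analogues of finite, tame, and wild introduced in Section \ref{xxsec0.3} and is compared with the computations of $\fpdim({\mathcal A})$ appearing in Theorem \ref{xxthm0.3} and Example \ref{xxex5.5}.
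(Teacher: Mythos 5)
The paper gives no proof of this theorem: it is stated as a known result and attributed to Drozd's paper \cite{Dr1}, exactly as you ultimately recommend. Your sketch of the bocs/matrix-problem reduction strategy and of the tame--wild exclusivity is a fair summary of the literature, but since the paper itself treats the trichotomy purely as a black-box citation, your conclusion to quote it rather than reprove it matches the paper's treatment.
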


\begin{remark}
\label{xxrem7.3}
\begin{enumerate}
\item[(1)]
An equivalent and more precise definition of a wild algebra is the following. 
An algebra $A$ is called {\it wild} if there is a faithful exact
embedding of abelian categories
\begin{equation}
\label{E7.3.1}\tag{E7.3.1}
Emb: \quad \Mod_{f.d.}-\Bbbk \langle x,y\rangle \to \Mod_{f.d.}-A
\end{equation}
that respects indecomposability and isomorphism classes.
\item[(2)]
A stronger notion of wildness is the following. An algebra $A$ 
is called {\it strictly wild}, also called {\it fully wild}, 
if $Emb$ in \eqref{E7.3.1} is a fully faithful embedding.
\item[(3)]
It is clear that strictly wild is wild. The converse is not true.
\end{enumerate}
\end{remark}

We collect some celebrated results in terms of representation types
of path algebras \cite{Ga1} and \cite{Na, DoF}. 

\begin{theorem}
\label{xxthm7.4}
Let $Q$ be a finite connected quiver.
\begin{enumerate}
\item[(1)] \cite{Ga1}
The path algebra $\Bbbk Q$ is of finite representation
type if and only if the underlying graph of $Q$ is a 
Dynkin diagram of type $ADE$.
\item[(2)] \cite{Na, DoF}
The path algebra $\Bbbk Q$ is of tame
representation type if and only if the underlying graph of 
$Q$ is an extended Dynkin
diagram of type $\widetilde{A}\widetilde{D}\widetilde{E}$.
\end{enumerate}
\end{theorem}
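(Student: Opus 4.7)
The plan is to recognize that Theorem \ref{xxthm7.4} is a compendium of classical deep results that this paper invokes rather than proves from scratch. If I were to lay out the arguments, the approach would proceed as follows.

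For part (1), I would use the Tits quadratic form approach due to Tits, Gabriel, and Bernstein--Gelfand--Ponomarev. First, to each quiver $Q$ I associate the Tits form
$$q_Q(d) = \sum_{i} d_i^2 - \sum_{\alpha} d_{s(\alpha)} d_{t(\alpha)}$$
on $\mathbb{Z}^{Q_0}$, where the first sum is over vertices and the second is over arrows; this depends only on the underlying graph. The heart of Gabriel's theorem is the bijection $M \mapsto \underline{\dim}\, M$ between isomorphism classes of indecomposable finite-dimensional $\Bbbk Q$-modules and positive roots of the root system attached to the underlying graph. This bijection is produced using reflection functors at sinks and sources together with the action of a Coxeter element on the root lattice. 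Finite representation type is then equivalent to the root system having only finitely many positive roots, equivalently $q_Q$ being positive definite, which by the classical classification of positive definite integral symmetric forms is equivalent to the underlying graph being Dynkin of type $ADE$.

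For part (2), I would extend the same framework into the semidefinite regime. If the underlying graph of $Q$ is extended Dynkin $\widetilde{A}\widetilde{D}\widetilde{E}$, then $q_Q$ is positive semidefinite with a one-dimensional radical spanned by the null root $\delta$. The indecomposable representations then split into three classes --- preprojective, regular, and preinjective --- and the regular indecomposables organize into a $\mathbb{P}^1$-parameterized family of tubes (homogeneous at generic points, with at most three exceptional tubes). For each dimension $n$, all but finitely many isomorphism classes of $n$-dimensional indecomposables sit inside these tubes, yielding exactly the finite number of one-parameter families required for tameness. Conversely, if the underlying graph is neither Dynkin nor extended Dynkin, one exhibits wildness by showing that $\Bbbk Q$ contains a convex subquiver whose path algebra is already wild (such as the $3$-Kronecker or a five-subspace quiver), then pulls back representations along the corresponding faithful exact functor into $\Mod_{f.d.}-\Bbbk Q$.

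The main obstacle in any self-contained treatment would be the extended Dynkin classification in part (2): setting up the tubular structure of the regular component and parameterizing the tubes by $\mathbb{P}^1$ requires substantial Auslander--Reiten machinery and leans on Drozd's tame-wild trichotomy (Theorem \ref{xxthm7.2}), which is itself highly nontrivial. Since all of these results are standard, the paper appeals directly to \cite{Ga1, Na, DoF}, and that is the route I would take here as well.
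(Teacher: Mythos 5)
Your proposal is correct and matches the paper exactly: the paper states Theorem \ref{xxthm7.4} as a collection of celebrated classical results and offers no proof, simply citing \cite{Ga1} for part (1) and \cite{Na, DoF} for part (2), which is precisely the route you settle on. Your sketch of the underlying classical arguments (Tits form and reflection functors for the Dynkin case, tubes over extended Dynkin diagrams for the tame case) is accurate background but is not reproduced in the paper.
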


Our main goal in this section is to prove Theorem 
\ref{xxthm0.3}. 
We thank Klaus Bongartz for suggesting the following lemma.

\begin{lemma}
\label{xxlem7.5} \cite{B1}
Let $A$ be a finite dimensional algebra that is strictly 
wild. Then, for each integer $a>0$, there is a finite dimensional 
brick left $A$-module $N$ such that $\dim \Ext^1_A(N,N)\geq a$. 
\end{lemma}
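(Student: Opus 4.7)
The plan is to use strict wildness to transport the problem to the free algebra $R := \Bbbk\langle x,y\rangle$, exhibit arbitrarily large bricks there, and then bound $\dim \Ext^1$ from below via a short projective resolution.

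First I would unwind strict wildness: by hypothesis there is a fully faithful exact functor $F : \Mod_{f.d.}-R \to \Mod_{f.d.}-A$. Full faithfulness forces $\End_A(F(M))\cong \End_R(M)$, so bricks go to bricks. Fully faithful functors reflect isomorphisms, and combined with exactness this produces, for any $R$-modules $M,N$, an injection $\Ext^1_R(M,N) \hookrightarrow \Ext^1_A(F(M),F(N))$: if an extension $0\to N\to E\to M\to 0$ became split after applying $F$, the splitting $F(E)\to F(N)$ would lift through $F$ to a splitting of the original. This step is the main place where one must be careful, since $F$ is only assumed exact, not continuous for derived functors.

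Second I would compute $\dim\Ext^1_R(M,M)$ for a brick $M$ of dimension $d$. Since $R$ is the path algebra of the one-vertex/two-loop quiver, it is hereditary and admits the standard resolution
\begin{equation*}
0 \longrightarrow R\otimes V\otimes M \longrightarrow R\otimes M \longrightarrow M \longrightarrow 0,
\end{equation*}
where $V=\Bbbk x\oplus\Bbbk y$. Applying $\Hom_R(-,M)$ and taking the Euler characteristic of the resulting four-term exact sequence gives
\begin{equation*}
\dim\End_R(M) - \dim\Ext^1_R(M,M) = d\cdot d - (\dim V)\cdot d\cdot d = -d^2,
\end{equation*}
so $\dim\Ext^1_R(M,M) = d^2 + 1$ for every $d$-dimensional brick.

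Third I would produce an explicit $d$-dimensional brick $M_d$ over $R$ for each $d\geq 1$: let $x$ act as a single nilpotent Jordan block of size $d$ and $y$ act as a diagonal matrix with $d$ pairwise distinct eigenvalues. Any $R$-endomorphism must commute with $y$ (so be diagonal) and with the Jordan block (so be upper-triangular Toeplitz); the intersection consists only of scalars. Setting $N := F(M_d)$ then yields a brick $A$-module with $\dim\Ext^1_A(N,N)\geq d^2+1$, which exceeds any prescribed $a$ once $d$ is chosen large. The heart of the argument is the injectivity in step one; the remaining two steps are linear-algebraic and essentially mechanical.
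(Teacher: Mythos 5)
Your proof is correct, and it shares with the paper the one genuinely delicate step: that a fully faithful exact $\Bbbk$-linear functor induces an injection on $\Ext^1$ (your splitting-lifts-back argument is the right justification, together with the observation that an exact additive functor respects Baer sums, so the induced map is linear and a trivial kernel suffices). Where you diverge is in how you manufacture a brick with large self-extensions. The paper composes two embeddings: it takes the local algebra $B=\Bbbk\langle V\rangle/(V^{\otimes 2})$ with $\dim V=a$, whose simple module $S$ satisfies $\dim\Ext^1_B(S,S)=a$, and invokes Bongartz's representation-embedding theorem to place $\Mod_{f.d.}\text{-}B$ fully faithfully and exactly inside $\Mod_{f.d.}\text{-}\Bbbk\langle x,y\rangle$ before applying strict wildness of $A$. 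You instead work directly over the free algebra $R=\Bbbk\langle x,y\rangle$, exhibit an explicit $d$-dimensional brick $M_d$ (Jordan block plus a diagonal matrix with distinct eigenvalues, available since $\Bbbk$ is algebraically closed hence infinite), and compute $\dim\Ext^1_R(M_d,M_d)=d^2+1$ from the standard length-one free resolution of modules over a tensor algebra. Your Euler-characteristic computation and the centralizer argument for brickness both check out. The trade-off: your route is more self-contained, eliminating the dependence on the external embedding theorem cited as \cite{B2}, at the price of a short explicit linear-algebra verification; the paper's route is shorter on the page but outsources the hard content to Bongartz. Both are valid proofs of the lemma.
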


\begin{proof} Let $V$ be the vector space $\bigoplus_{i=1}^a \Bbbk x_i$
and let $B$ be the finite dimensional algebra 
$\Bbbk\langle V \rangle/(V^{\otimes 2})$. By \cite[Theorem 2(i)]{B2}, 
there is a fully faithful exact embedding
$$\Mod_{f.d.}-B \longrightarrow \Mod_{f.d.}-\Bbbk \langle x,y\rangle.
$$ 
Since $A$ is strictly wild, there is a fully faithful exact
embedding  
$$
\Mod_{f.d.}-\Bbbk \langle x,y\rangle 
\longrightarrow \Mod_{f.d.}-A.
$$
Hence we have a fully faithful exact embedding  
\begin{equation}
\label{E7.5.1}\tag{E7.5.1}
F: \quad \Mod_{f.d.}-B \longrightarrow \Mod_{f.d.}-A.
\end{equation} 
Let $S$ be the trivial $B$-module $B/B_{\geq 1}$. It follows from an easy 
calculation that 
$\dim \Ext^1_B(S,S)=\dim (V)^*=a$. Since $F$ is fully faithful exact, $F$ induces an 
injection 
$$F: \Ext^1_B(S,S)\longrightarrow \Ext^1_{A}(F(S), F(S)).$$
Thus $\dim \Ext^1_A(F(S),F(S))\geq a$. Since $S$ is simple,
it is a brick. Hence, $F(S)$ is a brick. The assertion follows by
taking $N=F(S)$.
\end{proof}

\begin{proposition}
\label{xxpro7.6}
\begin{enumerate}
\item[(1)]
Let $A$ be a finite dimensional algebra that is strictly wild, then
$$\fpdim^1(E^1)=\fpdim({\mathfrak A})=\fpdim ({\mathcal A})=\infty.$$
\item[(2)]
If $A:=\Bbbk Q$  is wild, then 
$$\fpdim^1(E^1)=\fpdim({\mathfrak A})=\fpdim ({\mathcal A})=\infty.$$
\end{enumerate}
\end{proposition}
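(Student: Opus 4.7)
The whole statement reduces to one simple observation: if, for every positive integer $a$, we can exhibit a single brick $N$ in $\Mod_{f.d.}-A$ with $\dim\Ext^1_A(N,N)\geq a$, then $\fpdim^1(E^1)=\infty$. Indeed, choose $\phi=\{N\}\in\Phi_{1,b}$; the $1\times 1$ adjacency matrix of $(\phi,E^1)$ is just $(\dim\Ext^1_A(N,N))$, whose spectral radius is at least $a$. Since $a$ is arbitrary, Definition~\ref{xxdef2.3} forces $\fpdim^1(E^1)=\infty$. The chain $\fpdim^1(E^1)\le\fpdim(E^1)=\fpdim(\mathfrak A)$ is then immediate from Definitions~\ref{xxdef2.3}(2) and~\ref{xxdef2.7}(1), and the bound $\fpdim(\mathfrak A)\le\fpdim(\mathcal A)$ comes from Lemma~\ref{xxlem2.9}. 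So all three invariants collapse to $\infty$.

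For part (1), the required fat bricks are delivered for free by Lemma~\ref{xxlem7.5}: strict wildness of $A$ produces, for every $a>0$, a brick $N$ with $\dim\Ext^1_A(N,N)\ge a$, and the reduction above applies verbatim.

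For part (2) the same reduction will work, but Lemma~\ref{xxlem7.5} cannot be invoked directly because, as Remark~\ref{xxrem7.3}(3) warns, wildness does not in general imply strict wildness. Nevertheless, for hereditary algebras the two notions do coincide; this is a classical theorem in the representation theory of wild hereditary algebras. Once this upgrade is in place, (2) reduces to (1). Alternatively, one can bypass the upgrade by constructing fat bricks directly: for a wild connected quiver $Q$, the underlying graph is neither of Dynkin nor of extended-Dynkin type, so by Theorem~\ref{xxthm7.4} the category $\Mod_{f.d.}-\Bbbk Q$ is of truly wild type, and in its regular Auslander--Reiten components one can exhibit indecomposable bricks with arbitrarily large self-extension dimension; plugging these into the singleton $\phi$ argument again completes the proof.

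The main obstacle is modest: it is exactly the wild-implies-strictly-wild step for hereditary algebras (or, equivalently, the direct construction of bricks with arbitrarily large self-$\Ext^1$ inside a wild quiver category). Everything else is just unwinding the definitions of $\fpdim^1$, $\fpdim(\mathfrak A)$, and $\fpdim(\mathcal A)$ and stringing together Lemma~\ref{xxlem7.5} with Lemma~\ref{xxlem2.9}.
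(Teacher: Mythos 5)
Your proof is correct and follows essentially the same route as the paper: reduce to producing singleton brick sets with arbitrarily large self-$\Ext^1$, obtain these from Lemma \ref{xxlem7.5} in the strictly wild case, and for part (2) invoke the classical fact that a wild path algebra is strictly wild (the paper cites Gabriel and Ariki for this), then chain the inequalities via Lemma \ref{xxlem2.9}. The alternative you sketch for (2) via regular Auslander--Reiten components is not needed and is not what the paper does.
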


\begin{proof} 
(1) For each integer $a$, by Lemma \ref{xxlem7.5},
there is a brick $N$ in ${\mathfrak A}$ such that 
$\Ext^1_{\mathfrak A}(N,N)\geq a$. Hence $\fpdim^1 (E^1)\geq a$. 
Since $a$ is arbitrary, $\fpdim^1(E^1)=\infty$. Consequently,
$\fpdim({\mathfrak A})=\infty$. By Lemma \ref{xxlem2.9},
$\fpdim({\mathcal A})=\infty$.

(2) It is well-known that a wild path algebra is strictly
wild, see a comment of Gabriel \cite[p.149]{Ga2} or 
\cite[Proposition 7]{Ar}. The assertion follows from part (1).
\end{proof}

The following lemma is based on a well-understood AR-quiver theory 
for acyclic quivers of finite representation type and the  
hammock theory introduced by Brenner \cite{Bre}. We refer to 
\cite{RV} if the reader is interested in a more abstract version 
of the hammock theory. 

For a class of quivers including all ADE quivers, there is a 
convenient (though not essential) way of positioning the vertices 
as in \cite[Example IV.2.6]{AS}. A quiver $Q$ is called 
{\it well-positioned} if the vertices of $Q$ are located so that 
all arrows are strictly from the right to the left of the same 
horizontal distance. 
For example, the following quiver $D_n$ is well-positioned: 
\[
\begin{tikzcd}
 & & & & n-1 \arrow[ld]  \\
1 & \arrow[l] 2 & \arrow[l] \cdots & \arrow[l] n-2 \\
 & & & & n \arrow[lu]  \\
\end{tikzcd}
\]

\begin{lemma}
\label{xxlem7.7}
Let $Q$ be a quiver such that 
\begin{enumerate}
\item[(i)]
the underlying graph of $Q$ is a Dynkin diagram of 
type $A$, or $D$, or $E$, and that
\item[(ii)]
$Q$ is well-positioned. 
\end{enumerate}
Let $A=\Bbbk Q$ and let $M, N$ be two indecomposable 
left $A$-modules in the AR-quiver of $A$. 
Then the following hold.
\begin{enumerate}
\item[(1)]
There is a standard way of defining the order or degree 
for indecomposable left $A$-modules $M$,
denoted by $\deg M$, such that all arrows in the 
the AR-quiver have degree 1, or equivalently, 
all arrows are from the left to the right of the 
same horizontal distance. As in 
\eqref{E5.4.2}, when $Q=A_2$, $\deg P_2=0$, $\deg P_1=1$
and $\deg I_1=2$.
\item[(2)]
If $\Hom_A(M,N)\neq 0$, then $\deg M\leq \deg N$. 
\item[(3)]
The degree of the AR-translation $\tau$ is $-2$. 
\item[(4)]
If $\Ext^1_{A}(M,N)\neq 0$, then $\deg M\geq \deg N+2$.
\item[(5)]
There is no oriented cycle in the $E^1$-quiver of 
${\mathfrak A}:=\Mod_{f.d.}-\Bbbk Q$, 
denoted by $Q^{E^1}_{\mathfrak A}$, defined before 
Lemma {\rm{\ref{xxlem2.10}}}.  
\item[(6)]
$\fpdim({\mathfrak A})=0$. 
\end{enumerate}
\end{lemma}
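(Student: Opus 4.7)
The plan is to build a ``degree'' function on the indecomposable objects of ${\mathfrak A}:=\Mod_{f.d.}-\Bbbk Q$ directly from the combinatorics of the Auslander-Reiten quiver, and then read off parts (2)--(6) from this function together with standard AR formulas. For (1), I would use that for $Q$ of Dynkin type the AR-quiver of ${\mathfrak A}$ embeds as a full translation subquiver of ${\mathbb Z}Q$, with the indecomposable projectives forming the slice determined by the given orientation (see \cite[IV]{AS} or \cite[VII]{ARS}). Assigning $\deg P_i$ to be the horizontal coordinate of vertex $i$ in the well-positioned picture of $Q$, I would propagate the definition by declaring that every arrow of the AR-quiver raises the degree by $1$. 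This is consistent because the mesh structure of ${\mathbb Z}Q$ turns each almost split sequence $0\to \tau M\to E\to M\to 0$ into a commutative mesh in which the two middle factors sit one horizontal step to the right of $\tau M$ and one step to the left of $M$; this simultaneously establishes (3), since $\tau^{-1}M$ then lies two horizontal steps to the right of $M$.

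Part (2) is the well-known fact that for a representation-finite hereditary algebra every nonzero morphism between indecomposables is a sum of compositions of irreducible morphisms, i.e.\ it is represented by paths in the AR-quiver (this is one of the basic outputs of the hammock theory of Brenner \cite{Bre}, see also \cite{RV}). Because each arrow raises $\deg$ by $1$, the existence of such a path forces $\deg M\le \deg N$. Part (4) follows from (2) and (3) via Auslander-Reiten duality: for hereditary ${\mathfrak A}$, $\Ext^1_A(M,N)\neq 0$ implies $\Hom_A(N,\tau M)\neq 0$, and then (2) and (3) give $\deg N\le \deg(\tau M)=\deg M-2$. Part (5) is then immediate: an oriented cycle $X_1\to X_2\to\cdots\to X_n\to X_1$ in $Q^{E^1}_{\mathfrak A}$ would, by (4), yield $\deg X_1\ge \deg X_2+2\ge\cdots\ge\deg X_1+2n$, a contradiction.

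Finally, (6) combines (5) with Lemma \ref{xxlem2.10} and Theorem \ref{xxthm1.8}(1). By (5), $Q^{E^1}_{\mathfrak A}$ is acyclic, so every finite subquiver is acyclic and hence has $\fpdim=0$ by Theorem \ref{xxthm1.8}(1); Definition \ref{xxdef1.6}(2) then gives $\fpdim Q^{E^1}_{\mathfrak A}=0$, and Lemma \ref{xxlem2.10} yields $\fpdim(E^1)\le \fpdim Q^{E^1}_{\mathfrak A}=0$. Definition \ref{xxdef2.7}(1) identifies this with $\fpdim{\mathfrak A}$.

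The main obstacle is (1): one must know that $\Bbbk Q$ is a \emph{directed} algebra, i.e.\ that no indecomposable is $\tau$-periodic and the AR-quiver has no oriented cycles, so that the horizontal coordinate extends unambiguously from the slice of projectives to all of the AR-quiver. This is precisely where the Dynkin hypothesis enters: for non-Dynkin $Q$, $\tau$-periodicity would force inconsistent degrees. The verification itself is standard, but it is the conceptual crux on which the rest of the argument depends.
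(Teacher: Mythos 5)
Your proposal is correct and follows essentially the same route as the paper: degree defined via the knitting of the AR-quiver from the slice of projectives, morphisms between indecomposables factoring through paths of irreducible maps so that $\deg$ is non-decreasing along nonzero homs, the mesh relation giving $\deg\tau M=\deg M-2$, Auslander--Reiten/Serre duality $\Ext^1_A(M,N)\cong\Hom_A(N,\tau M)^*$ for part (4), and then acyclicity of $Q^{E^1}_{\mathfrak A}$ combined with Theorem \ref{xxthm1.8}(1) and Lemma \ref{xxlem2.10} for parts (5) and (6). Your write-up is in fact somewhat more explicit than the paper's (which largely cites the knitting algorithm and hammock theory for (1) and (2)), in particular in spelling out that representation-finiteness is what guarantees every nonzero morphism is a sum of compositions of irreducible maps and that directedness makes the degree well defined.
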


\begin{proof} (1) This is a well-known fact in AR-quiver 
theory. For each given quiver $Q$ as described in (i,ii), 
one can build the AR-quiver by using the Auslander-Reiten 
translation $\tau$ and {\it the Knitting Algorithm}, see 
\cite[Ch. 3]{Sc1}. Some explicit examples are given in 
\cite[Ch. 6]{Ga3} and \cite[Ch. 3]{Sc1}.

(2) This follows from (1). Note that the precise dimension of
$\Hom_A(M,N)$ can be computed by using hammock theory 
\cite{Bre, RV}. Some examples are given in 
\cite[Ch. 3]{Sc1}.

(3) This follows from the definition of the translation $\tau$ 
in the AR-quiver theory \cite[VII]{ARS}. See also, \cite[Remarks (2),
p. 23]{Cr}.

(4) By Serre duality, $\Ext^1_R(M,N)=\Hom_A(N, \tau M)^\ast$
\cite[Proposition I.2.3]{RVdB} or \cite[Lemma 1, p. 22]{Cr}.
If $\Ext^1_R(M,N)\neq 0$, then, by Serre duality and part (2),
$\deg N\leq \deg \tau M=\deg M-2$. Hence
$\deg M\geq \deg N+2$.

(5) In this case, every indecomposable module is a brick.
Hence the $E^1$-quiver $Q^{E^1}_{\mathfrak A}$ has the 
same vertices as the AR-quiver. By part (4), if there is an 
arrow from $M$ to $N$ in the quiver  $Q^{E^1}_{\mathfrak A}$,
then $\deg M\geq \deg N+2$. This 
means that all arrows in $Q_{\mathfrak A}^{E^1}$ 
are from the right to the left. 
Therefore there is no oriented cycle in $Q^{E^1}_{\mathfrak A}$.

(6) This follows from part (5), Theorem \ref{xxthm1.8}(1)
and Lemma \ref{xxlem2.10}.
\end{proof}

\begin{theorem}
\label{xxthm7.8} 
Let $Q$ be a finite quiver whose underlying graph is 
a Dynkin diagram of type $ADE$ and let $A=\Bbbk Q$.
Then $\fpdim ({\mathfrak A})=\fpdim({\mathcal A})=\fpgldim 
({\mathcal A})=0$.
\end{theorem}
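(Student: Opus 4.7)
The plan is to reduce to the well-positioned case already handled by Lemma \ref{xxlem7.7} and then to pass from the abelian category to its derived category via the hereditary machinery of Theorem \ref{xxthm3.5}.

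First I would argue that every quiver $Q$ whose underlying graph is an ADE Dynkin diagram admits a well-positioning. Since any such graph is a tree, $Q$ is automatically acyclic, and one may build a degree function $d\colon V(Q)\to\mathbb{Z}$ by fixing a root vertex $v_0$ with $d(v_0)=0$ and propagating uniquely along the tree by the rule $d(w)=d(v)-1$ whenever there is an arrow $v\to w$, and $d(w)=d(v)+1$ whenever there is an arrow $w\to v$. No consistency condition can fail, since there are no cycles. Placing each vertex at horizontal coordinate $d(v)$, with vertical coordinates chosen freely to avoid collisions, realizes $Q$ as well-positioned in the sense of Lemma \ref{xxlem7.7}: every arrow runs strictly right-to-left with horizontal step exactly $1$. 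Lemma \ref{xxlem7.7}(6) then yields $\fpdim(\mathfrak{A})=0$.

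Since $Q$ is acyclic, $\Bbbk Q$ is hereditary, so $\mathfrak{A}$ is a hereditary abelian category. Theorem \ref{xxthm3.5}(4) immediately gives
$$\fpdim(\mathcal{A})=\fpdim(\mathfrak{A})=0.$$
For the fp-global dimension, apply Theorem \ref{xxthm3.5}(1) with $\alpha=\mathrm{id}_{\mathfrak{A}}$ (so $\overline{\alpha}=\mathrm{id}_{\mathcal{A}}$) and $\gldim\mathfrak{A}\leq 1$; this yields $\fpdim(\Sigma^m)=0$ for every $m<0$ and every $m>1$. The remaining case $m=1$ is precisely the previous display. Hence $\fpdim(\Sigma^n)$ can be nonzero only at $n=0$, which forces $\fpgldim(\mathcal{A})=0$.

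The only step with any genuine content is the well-positioning of $Q$, and this is essentially cosmetic because the underlying Dynkin graph is a tree; indeed the paper already signals in the paragraph preceding Lemma \ref{xxlem7.7} that all ADE quivers belong to this class. A cleaner alternative, bypassing positioning altogether, would be to invoke Happel's theorem that any two orientations of an ADE graph yield derived-equivalent path algebras, and then transport the Lemma \ref{xxlem7.7}(6) computation along such an equivalence using Lemma \ref{xxlem3.1}.
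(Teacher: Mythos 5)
Your proof is correct, and apart from one step it coincides with the paper's argument: the passage $\fpdim({\mathfrak A})=\fpdim({\mathcal A})$ via Theorem \ref{xxthm3.5}(3)--(4) and the computation of $\fpgldim({\mathcal A})$ via Theorem \ref{xxthm3.5}(1) with $\alpha=\mathrm{id}$ are exactly what the paper does. The genuine difference is how you reach the hypotheses of Lemma \ref{xxlem7.7}. The paper invokes BGP reflection functors to replace $Q$ by a derived-equivalent representative that is well-positioned, and then has to route everything through the derived category (which is fine, since $\fpdim({\mathcal A})$ is a derived invariant and equals $\fpdim({\mathfrak A})$ by Theorem \ref{xxthm3.5}). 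You instead observe that \emph{every} orientation of an $ADE$ diagram is already well-positionable, because the underlying graph is a tree: propagating a height function from a root, decreasing by $1$ along the direction of each arrow, is consistent precisely because there are no cycles. This makes Lemma \ref{xxlem7.7}(6) apply directly to the given quiver, so you get $\fpdim({\mathfrak A})=0$ for the actual module category at hand without any reduction step; it also quietly shows that condition (ii) of Lemma \ref{xxlem7.7} is not a real restriction for tree quivers. Both routes are valid; yours is slightly more self-contained, while the paper's BGP reduction is the more traditional device and would still be needed if one wanted a single canonical representative for the knitting-algorithm computation behind Lemma \ref{xxlem7.7}.
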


\begin{proof} Since the path algebra $A$ is hereditary, 
${\mathfrak A}$ is a-hereditary of global dimension 1. 
By Theorem \ref{xxthm3.5}(3), $\fpdim({\mathfrak A})=
\fpdim({\mathcal A})$. If $Q_1$ and $Q_2$ are two 
quivers whose underlying graphs are the same, 
then, by Bernstein-Gelfand-Ponomarev (BGP) reflection
functors \cite{BGP},  $D^b(\Mod_{f.d.}-\Bbbk Q_1)$ and 
$D^b(\Mod_{f.d.}-\Bbbk Q_2)$ are triangulated 
equivalent. Hence we only need prove the statement 
for one representative. Now we can assume that 
$Q$ satisfies the hypotheses (i,ii) of Lemma \ref{xxlem7.7}.
By Lemma \ref{xxlem7.7}(6), $\fpdim({\mathfrak A})=0$.
Therefore $\fpdim({\mathcal A})=0$, or equivalently,
$\fpdim(\Sigma)=0$. By Theorem 
\ref{xxthm3.5}(1), $\fpdim(\Sigma^{i})=0$ for all $i\neq 0,1$.
Therefore $\fpgldim ({\mathcal A})=0$.
\end{proof}

\subsection{Weighted projective lines}
\label{xxsec7.2}
To prove Theorem \ref{xxthm0.3}, it remains to show part (2)
of the theorem. Our proof uses a result of \cite{CG} about 
weighted projective lines, which we now review. Details can 
be found in \cite[Section 1]{GL}.

For $t\geq 1$, let ${\bf p}:=(p_0,p_1,\cdots,p_t)$ be a 
$(t+1)$-tuple of positive integers, called the 
{\it weight sequence}. Let
${\bf D}:=(\lambda_0, \lambda_1,\cdots, \lambda_t)$ be a 
sequence of distinct points of the projective line 
${\mathbb P}^1$ over $\Bbbk$. We normalize ${\bf D}$ 
so that $\lambda_0=\infty$, $\lambda_1=0$ and 
$\lambda_2=1$ (if $t\geq 2$). Let
$$S:=\Bbbk[X_0,X_1,\cdots,X_t]/(X_i^{p_i}-X_1^{p_1}+\lambda_i X_0^{p_0},
i=2,\cdots,t).$$
The image of $X_i$ in $S$ is denoted by $x_i$ for all $i$.
Let ${\mathbb L}$ be the abelian group of rank 1 
generated by $\overrightarrow{x_i}$ for $i=0,1,\cdots,t$
and subject to the relations
$$p_0 \overrightarrow{x_0}= \cdots =p_i \overrightarrow{x_i}=\cdots
=p_t \overrightarrow{x_t}=: \overrightarrow{c}.$$
The algebra $S$ is ${\mathbb L}$-graded by setting $\deg x_i=
\overrightarrow{x_i}$. The corresponding 
{\it weighted projective line}, denoted by ${\mathbb X}({\bf p},{\bf D})$
or simply ${\mathbb X}$,
is a noncommutative space whose category of coherent sheaves is given by
the quotient category 
$$coh({\mathbb X}):=\frac{\gr^{\mathbb L}-S}{\gr_{f.d.}^{\mathbb L}-S}$$
where $\gr^{\mathbb L}-S$ is the category of noetherian 
${\mathbb L}$-graded left $S$-modules and $\gr_{f.d.}^{\mathbb L}-S$
is the full subcategory of $\gr^{\mathbb L}-S$ consisting of finite
dimensional modules. 

The weighted projective lines are classified into the following
three classes:
$${\mathbb X} \;\; {\rm{is}}\;\; 
\begin{cases} domestic \;\; & {\rm{if}} \;\; {\bf p} 
\;\; {\rm{is}}\; (p, q), (2,2,n), (2,3,3), (2,3,4), (2,3,5);\\
tubular \;\; & {\rm{if}} \;\; {\bf p} 
\;\; {\rm{is}}\; (2,3,6), (3,3,3), (2,4,4), (2,2,2,2);\\
wild \;\; & {\rm{otherwise}}.
\end{cases}
$$

Let ${\mathbb X}$ be a weighted projective curve. Let
$Vect({\mathbb X})$ be the full subcategory of $coh({\mathbb X})$
consisting of all vector bundles.
Similar to the elliptic curve case [Example \ref{xxex6.4}], one can define 
the concepts of {\it degree}, {\it rank} and {\it slope} of a vector 
bundle on a weighted projective curve ${\mathbb X}$, see \cite[Section 2]{LM}
for details. For each $\mu\in {\mathbb Q}\cup \{\infty\}$,
let $Vect_{\mu}({\mathbb X})$ be the full subcategory of $Vect({\mathbb X})$
consisting of all vector bundles of slope $\mu$.

\begin{lemma}
\label{xxlem7.9}
Let ${\mathbb X}={\mathbb X}({\bf p}, {\bf D})$ be a weighted 
projective line.
\begin{enumerate}
\item[(1)]
$coh({\mathbb X})$ is noetherian and hereditary.
\item[(2)]
$$D^b(coh({\mathbb X})) \cong 
\begin{cases} 
D^b(\Mod_{f.d.}-\Bbbk \widetilde{A}_{p, q}) & {\rm{if}}\;\; {\bf p}=(p,q),\\
D^b(\Mod_{f.d.}-\Bbbk \widetilde{D}_n) & {\rm{if}}\;\; {\bf p}=(2,2,n),\\
D^b(\Mod_{f.d.}-\Bbbk \widetilde{E}_6) & {\rm{if}}\;\; {\bf p}=(2,3,3),\\
D^b(\Mod_{f.d.}-\Bbbk \widetilde{E}_7) & {\rm{if}}\;\; {\bf p}=(2,3,4),\\
D^b(\Mod_{f.d.}-\Bbbk \widetilde{E}_8) & {\rm{if}}\;\; {\bf p}=(2,3,5).
\end{cases}
$$
\item[(3)]
Let $M$ be a generic simple object in $coh({\mathbb X})$.
Then $\Ext^1_{\mathbb X}(M,M)=1$.
\item[(4)]
$\fpdim^1 (coh({\mathbb X}))\geq 1$.
\item[(5)]
If ${\mathbb X}$ is tubular or domestic, then $\Ext^1_{\mathbb X}(X,Y)=0$ 
for all $X\in Vect_{\mu'}({\mathbb X})$ and $Y\in Vect_{\mu}({\mathbb X})$
with $\mu'< \mu$.
\item[(6)]
If ${\mathbb X}$ is domestic, then $\Ext^1_{\mathbb X}(X,Y)=0$ 
for all $X\in Vect_{\mu'}({\mathbb X})$ and $Y\in Vect_{\mu}({\mathbb X})$
with $\mu'\leq \mu$. As a consequence, $\fpdim(\Sigma\mid_{Vect_{\mu'}({\mathbb X})})=0$
for all $\mu<\infty$.
\item[(7)]
Suppose ${\mathbb X}$ is tubular. Then 
every indecomposable vector bundle ${\mathbb X}$ 
is semi-stable.
\item[(8)]
Suppose ${\mathbb X}$ is tubular and let $\mu\in {\mathbb Q}$. 
Then each $Vect_{\mu}({\mathbb X})$ is a uniserial category. 
Accordingly indecomposables in $Vect_{\mu}({\mathbb X})$ 
decomposes into Auslander-Reiten components, which all are 
tubes of finite rank. 
\end{enumerate}
\end{lemma}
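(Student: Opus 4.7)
The plan is to reduce essentially every part of the lemma to the standard structural theory of weighted projective lines developed by Geigle--Lenzing \cite{GL} (with complementary input from Lenzing--Meltzer \cite{LM}), invoking only small direct calculations on top of what is in those references.

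Parts (1), (2), (7), (8) I would handle primarily by citation. Noetherianness and hereditariness of $coh({\mathbb X})$ are established in \cite[Proposition 1.8]{GL}. The derived equivalences in (2) come from the canonical tilting object of $coh({\mathbb X})$ in the five domestic cases, whose endomorphism algebra is identified with the path algebra of the corresponding extended Dynkin quiver in \cite[Theorem 3.5]{GL}. Part (7), that in the tubular case all indecomposable vector bundles are semistable, and part (8), that each $Vect_\mu({\mathbb X})$ is a uniserial category whose Auslander--Reiten components are tubes of finite rank, are standard consequences of the tubular-family description in \cite[\S 5]{GL}.

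For (3) and (4), pick an ordinary point $\lambda\in{\mathbb P}^1\setminus\{\lambda_0,\ldots,\lambda_t\}$ and take $M=S_\lambda$, the simple torsion sheaf at $\lambda$. Since the Auslander--Reiten translate $\tau$ on $coh({\mathbb X})$ acts as an $\vec\omega$-twist and the homogeneous tubes over ordinary points have rank one, one has $\tau M\cong M$. Serre duality then yields
\[
\Ext^1_{\mathbb X}(M,M)\;\cong\;\Hom_{\mathbb X}(M,\tau M)^{*}\;=\;\Hom_{\mathbb X}(M,M)^{*}\;=\;\Bbbk,
\]
which is (3). Part (4) is then immediate: the singleton $\{M\}$ is a brick set whose adjacency matrix for $E^1$ is the $1\times 1$ matrix $(1)$, so $\fpdim^1(coh({\mathbb X}))\geq 1$.

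For (5) and (6), the decisive input is slope behavior of $\tau$. By \cite[\S 2]{LM}, $\tau$ acts on $Vect({\mathbb X})$ as an $\vec\omega$-twist, and $\deg\vec\omega<0$ in the domestic case while $\deg\vec\omega=0$ in the tubular case. Hence $\mu(\tau X)=\mu(X)$ in the tubular case and $\mu(\tau X)<\mu(X)$ in the domestic case. Given $X\in Vect_{\mu'}({\mathbb X})$ and $Y\in Vect_{\mu}({\mathbb X})$ with $\mu'<\mu$, we therefore have $\mu(\tau X)\leq\mu'<\mu=\mu(Y)$, and both $Y$ and $\tau X$ are semistable (by (7) in the tubular case, or its domestic analogue in \cite{GL}). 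No nonzero morphism from a semistable bundle of a given slope to a semistable bundle of strictly smaller slope exists, so $\Hom_{\mathbb X}(Y,\tau X)=0$ and hence $\Ext^1_{\mathbb X}(X,Y)=0$ by Serre duality, proving (5). In the domestic case the inequality $\mu(\tau X)<\mu(X)$ is strict, so the same argument applies with $\mu'=\mu$ as well, giving (6). The consequence $\fpdim(\Sigma\mid_{Vect_{\mu'}({\mathbb X})})=0$ then drops out, since every entry of the $E^1$-adjacency matrix on any brick subset of $Vect_{\mu'}({\mathbb X})$ vanishes. The main obstacle I expect is book-keeping around the precise form of $\tau$ on vector bundles and the slope shift $\deg\vec\omega$, so that semistability and Serre duality can be invoked exactly as they appear in \cite{GL, LM}; the arithmetic itself is routine once the correct tools are in hand.
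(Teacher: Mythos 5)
Your proposal is correct and, for the parts the paper itself only cites or asserts, it is actually more self-contained than the published proof. The paper disposes of (1), (2), (7), (8) by citation to \cite{GL} (as you do), states (3) without argument, and handles (5) and (6) by pointing to \cite[Corollary 4.34 and the comments following it]{Sc2}, where tubular and domestic appear under the names ``elliptic'' and ``parabolic.'' You instead prove (3) by identifying the generic simple with the simple torsion sheaf at an ordinary point, using that its tube is homogeneous so $\tau M\cong M$, and applying Serre duality $\Ext^1_{\mathbb X}(M,M)\cong \Hom_{\mathbb X}(M,\tau M)^*$; and you prove (5), (6) by the slope computation $\mu(\tau X)=\mu(X)+\deg\overrightarrow{\omega}$ with $\deg\overrightarrow{\omega}=0$ (tubular) or $<0$ (domestic), combined with semistability and the vanishing of maps from higher to strictly lower slope. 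This is exactly the argument underlying the cited statements in \cite{Sc2}, so nothing is lost and the reader gains an explanation of why the strict inequality $\mu'<\mu$ suffices in the tubular case but $\mu'\leq\mu$ works in the domestic case. Two small cautions: in (2) the endomorphism algebra of the canonical tilting bundle is the canonical algebra, not the extended Dynkin path algebra itself --- the identification with $\Bbbk\widetilde{A}_{p,q}$, $\Bbbk\widetilde{D}_n$, $\Bbbk\widetilde{E}_{6,7,8}$ in the domestic cases is the further step recorded in \cite[5.4.1]{GL}, which is what the paper cites; and in (5), (6) your argument as written applies to indecomposable (hence semistable) bundles, so one should add the remark that a general object of $Vect_{\mu}({\mathbb X})$ is treated by decomposing into indecomposable summands, consistent with the intended reading of $Vect_{\mu}({\mathbb X})$ in \cite{LM, Sc2}.
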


\begin{proof} (1) This is well-known.

(2) \cite[5.4.1]{GL}.

(3) Let $M$ be a generic simple object. 
Then $M$ is a brick and $\Ext^1(M,M)=1$.

(4) Follows from (3) by taking $\phi:=\{M\}$.

(5) This is \cite[Corollary 4.34(i)]{Sc2} since tubular is also called
elliptic in \cite{Sc2}.

(6) This is \cite[Comments after Corollary 4.34]{Sc2} since domestic 
is also called parabolic in \cite{Sc2}. The consequence is clear.

(7) \cite[Theorem 5.6(i)]{GL}.

(8) \cite[Theorem 5.6(iii)]{GL}.
\end{proof}

We will use the following result which is proved in \cite{CG}.

\begin{theorem}\cite{CG}
\label{xxthm7.10} Let ${\mathbb X}$ be a weighted 
projective line.
\begin{enumerate}
\item[(1)]
If ${\mathbb X}$ is domestic, then $\fpdim\; D^b(coh({\mathbb X}))=1$.
\item[(2)]
If ${\mathbb X}$ is tubular, then $\fpdim\; D^b(coh({\mathbb X}))=1$.
\item[(3)]
If ${\mathbb X}$ is wild, then $\fpdim\; D^b(coh({\mathbb X}))
\geq \dim 
\Hom_{\mathbb X}({\mathcal O}_{\mathbb X},
{\mathcal O}_{\mathbb X}(\overrightarrow{\omega}))$ where $\overrightarrow{\omega}$
is the dualizing element \cite[Sec. 1.2]{GL}. 
\end{enumerate}
\end{theorem}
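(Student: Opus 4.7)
My first step is to leave the derived category: by Lemma \ref{xxlem7.9}(1), $coh({\mathbb X})$ is hereditary, so Theorem \ref{xxthm3.5}(4) gives $\fpdim D^b(coh({\mathbb X})) = \fpdim coh({\mathbb X})$. All subsequent estimates therefore take place in $coh({\mathbb X})$ with respect to the functor $E^1 = \Ext^1_{\mathbb X}(-,-)$.

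Part (3) is the short case. I would take the singleton brick set $\phi = \{{\mathcal O}_{\mathbb X}\}$. Serre duality on $coh({\mathbb X})$, whose Serre functor is the twist $(\overrightarrow{\omega})$, yields
$$\dim \Ext^1_{\mathbb X}({\mathcal O}_{\mathbb X}, {\mathcal O}_{\mathbb X}) = \dim \Hom_{\mathbb X}({\mathcal O}_{\mathbb X}, {\mathcal O}_{\mathbb X}(\overrightarrow{\omega})).$$
The adjacency matrix $A(\phi, E^1)$ is just this number, hence $\fpdim^1(coh({\mathbb X}))$, and therefore $\fpdim D^b(coh({\mathbb X}))$, is bounded below by $\dim \Hom_{\mathbb X}({\mathcal O}_{\mathbb X}, {\mathcal O}_{\mathbb X}(\overrightarrow{\omega}))$.

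For parts (1) and (2) the lower bound $\geq 1$ is immediate from Lemma \ref{xxlem7.9}(4); the task is the upper bound $\fpdim coh({\mathbb X}) \leq 1$. My plan is to produce an $E^1$-decomposition (in the sense of Section \ref{xxsec6}) of the set $B$ of bricks in $coh({\mathbb X})$ and then invoke Lemma \ref{xxlem6.1}. Stratify $B$ by slope: let $B^{\infty}$ consist of torsion bricks, and for $\mu \in {\mathbb Q}$ let $B^{\mu}$ consist of semistable bundle bricks of slope $\mu$. In the tubular case Lemma \ref{xxlem7.9}(7) makes every indecomposable bundle semistable, exhausting bundle bricks; in the domestic case the (rare) non-semistable bundle bricks are handled by passing to their Harder--Narasimhan pieces and re-invoking Lemma \ref{xxlem7.9}(6). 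Ordering slopes with $\infty$ above every rational, the decomposition property follows from: (i) Lemma \ref{xxlem7.9}(5) in the tubular case or Lemma \ref{xxlem7.9}(6) in the domestic case for pairs of bundles of distinct slopes; and (ii) the vanishing $\Hom_{\mathbb X}(Y, X(\overrightarrow{\omega})) = 0$ when $Y$ is torsion and $X$ is a bundle, which via Serre duality yields $\Ext^1_{\mathbb X}(X, Y) = 0$.

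By Lemma \ref{xxlem6.1} it suffices to check $\fpdim^n|_{B^\lambda}(E^1) \leq 1$ for every $\lambda$. For $\lambda = \infty$, a torsion brick is supported at a single point and $\Ext^1$ between torsion sheaves at distinct points vanishes, so the adjacency matrix on any such brick set is the identity, of spectral radius $1$. For $\lambda \in {\mathbb Q}$ in the domestic case, Lemma \ref{xxlem7.9}(6) applied with $\mu' = \mu$ kills $\Ext^1$ on $Vect_\mu({\mathbb X})$, giving $\fpdim|_{B^\mu} = 0$. For $\lambda \in {\mathbb Q}$ in the tubular case, Lemma \ref{xxlem7.9}(8) presents $Vect_\mu({\mathbb X})$ as a uniserial category whose AR-components are tubes of finite rank; the adjacency data of $E^1$ restricted to bricks in each tube yields cyclic-permutation-type matrices of spectral radius $1$, and $\Ext^1$ vanishes between distinct tubes. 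Combining these slice estimates through Lemma \ref{xxlem6.1} yields $\fpdim coh({\mathbb X}) \leq 1$. The main obstacle is the tubular slice estimate: one must verify in detail that the brick condition, combined with the precise shape of $\Hom$ and $\Ext^1$ within a tube, genuinely caps the spectral radius at $1$, and a secondary nuisance is auditing which non-semistable indecomposables in the domestic case are in fact bricks and how they sit in the slope stratification.
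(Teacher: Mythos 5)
The paper itself does not prove this theorem; it is imported verbatim from the companion paper \cite{CG}, so there is no in-text proof to compare against. Judged on its own merits, your architecture is the expected one: part (3) is correct as you state it (the singleton $\{{\mathcal O}_{\mathbb X}\}$ is a brick, it is atomic because $coh({\mathbb X})$ is hereditary, and Serre duality $\Ext^1({\mathcal O},{\mathcal O})\cong \Hom({\mathcal O},{\mathcal O}(\overrightarrow{\omega}))^*$ gives the lower bound); and for (1),(2) the reduction to $coh({\mathbb X})$ via Theorem \ref{xxthm3.5}(4), the slope stratification, and the appeal to Lemma \ref{xxlem6.1} together with Lemma \ref{xxlem7.9}(5),(6) and $\Ext^1(\text{bundle},\text{torsion})\cong\Hom(\text{torsion},\text{bundle}(\overrightarrow{\omega}))^*=0$ is exactly the template of Example \ref{xxex6.4}.

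There are, however, two genuine gaps. First, your claim that the torsion slice $B^{\infty}$ contributes only identity adjacency matrices is false for a weighted projective line: at an exceptional point $\lambda_i$ of weight $p_i>1$ the torsion sheaves form a tube of rank $p_i$, which contains several non-isomorphic bricks supported at the \emph{same} point (the simples $S_{i,0},\dots,S_{i,p_i-1}$ and the longer uniserials of quasi-length $<p_i$), with nonzero $\Ext^1$ among them; the adjacency matrices are cyclic-permutation-like, not diagonal. So the torsion slice already requires the same tube analysis you defer in the tubular bundle slices, and this analysis --- showing that every brick set inside a tube of rank $r$ has adjacency spectral radius exactly $1$ --- is the actual mathematical content of the theorem. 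The paper signals this by attributing precisely that computation (for $\overrightarrow{A_{n-1}}$, i.e.\ a rank-$n$ tube) to ``direct linear algebra calculations'' in \cite{CG} (Example \ref{xxex7.12}); your proposal asserts the conclusion without performing it. Second, in the domestic case your stratification only covers semistable bundle bricks; ``passing to the Harder--Narasimhan pieces'' of a hypothetical non-semistable brick does not assign that brick to any stratum $B^{\lambda}$, so the $\sigma$-decomposition is not defined on all of $\Phi_{1,b}$ unless you first prove that every indecomposable bundle on a domestic ${\mathbb X}$ is semistable (true, but it needs a citation or an argument, and Lemma \ref{xxlem7.9} only records the analogous fact (7) in the tubular case).
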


There is a similar statement for smooth complex projective curves
[Proposition \ref{xxpro6.5}].
The authors are interested in answering the following question.

\begin{question}
\label{xxque7.11}
Let ${\mathbb X}$  be a wild weighted projective line. What is 
the exact value of $\fpdim^n \; D^b(coh({\mathbb X}))$?
\end{question}

\subsection{Tubes}
The following example is studied in \cite{CG}, which is 
dependent on direct linear algebra calculations.

\begin{example}\cite{CG}
\label{xxex7.12}
Let $\xi$ be a primitive $n$th root of unity.
Let $T_n$ be the algebra 
$$T_n:=\frac{\Bbbk \langle g,x\rangle}{(g^n-1, xg-\xi gx)}.$$
This algebra can be expressed by using a group action.
Let $G$ be the group 
$$\{g\mid g^n=1\}\cong {\mathbb Z}/(n)$$ 
acting on the polynomial ring $\Bbbk[x]$ by $g\cdot x=\xi x$.
Then $T_n$ is naturally isomorphic to the skew group ring 
$\Bbbk[x]\ast G$. Let $\overrightarrow{A_{n-1}}$ denote the 
cycle quiver with $n$ vertices, namely, the quiver with one 
oriented cycle connecting $n$ vertices. It is also known that 
$T_n$ is isomorphic to the path algebra of the quiver 
$\overrightarrow{A_{n-1}}$. Then $\fpdim (\Mod_{f.d.}-T_n)=1$
by \cite{CG}.
\end{example}

\subsection{Proof of Theorem \ref{xxthm0.3}}
\label{xxsec7.4}

\begin{proof}[Proof of Theorem \ref{xxthm0.3}]
Part (1) follows from Theorems \ref{xxthm7.4}(1) and \ref{xxthm7.8}.
Part (3) follows from Proposition \ref{xxpro7.6}(2). It remains to deal with
part (2).

(2) By Theorem  \ref{xxthm7.4}(2), $Q$ must be of type either $\overrightarrow{A_{n-1}}$,
or $\widetilde{A}_{p, q}$, or $\widetilde{D}_n$, or $\widetilde{E}_{6,7,8}$. 
If $Q$ is of type $\overrightarrow{A_{n-1}}$, the assertion follows from Example 
\ref{xxex7.12}. If $Q$ is of type $\widetilde{A}_{p, q}$, 
$\widetilde{D}_n$, or $\widetilde{E}_{6,7,8}$, the assertion follows 
from Lemma \ref{xxlem7.9}(2) and Theorem \ref{xxthm7.10}(1).
\end{proof}

\section{Complexity}
\label{xxsec8}

The concept of complexity was first introduced by Alperin-Evens 
in the study of group cohomology in 1981 \cite{AE}. Since then
the study of complexity has been extended to finite dimensional 
algebras, Frobenius algebras, Hopf algebras and commutative algebras. 
First we recall the classical 
definition of the complexity for finite dimensional algebras and then 
give a definition of the complexity for triangulated categories. We 
give the following modified (but equivalent) version, which can 
be generalized.

\begin{definition}
\label{xxdef8.1}
Let $A$ be a finite dimensional algebra and $T=A/J(A)$
where $J(A)$ is the Jacobson radical of $A$. Let $M$ be a 
finite dimensional left $A$-module. 
\begin{enumerate}
\item[(1)]
The {\it complexity} of $M$ is defined to be
$$cx(M):=\limsup_{n\to \infty} \log_{n} (\dim \Ext^n_A(M,T))+1.
$$
\item[(2)]
The {\it complexity} of the algebra $A$ is defined to be
$$cx(A):=cx(T).$$
\end{enumerate}
\end{definition}

In the original definition of {\it complexity} by Alperin-Evens 
\cite{AE} and in most of other papers,  the dimension of $n$-syzygies
is used instead of the dimension of the $\Ext^n$-groups, but it is easy 
to see that the asymptotic behavior of these two series are the same, 
therefore these give rise to the same complexity. It is well-known that 
$cx(M)\leq cx(A)$ for all finite dimensional left $A$-modules $M$. Next 
we introduce the notion of a complexity for a triangulated category 
which is partially motivated by the work in \cite[Section 4]{BHZ}.

\begin{definition}
\label{xxdef8.2}
Let ${\mathcal T}$ be a pre-triangulated category. Let $d$ be a real 
number.
\begin{enumerate}
\item[(1)]
The left subcategory of complexity less than $d$ is defined to
be
$$_d{\mathcal T}:=\left\{ X\in {\mathcal T} \mid 
\lim_{n\to \infty}
\frac{\dim \Hom_{\mathcal T}(X, \Sigma^n (Y))}{n^{d-1}} =0, 
\forall\; Y\in {\mathcal T}\right\}.
$$
\item[(2)]
The right subcategory of complexity less than $d$ is defined to
be
$${\mathcal T}_d:=\left\{ X\in {\mathcal T} \mid 
\lim_{n\to \infty}
\frac{\dim \Hom_{\mathcal T}(Y, \Sigma^n (X))}{n^{d-1}} =0, 
\forall\; Y\in {\mathcal T}\right\}.
$$
\item[(3)]
The {\it complexity} of ${\mathcal T}$ is defined to 
be
$$cx({\mathcal T}):=\inf \left\{ d \mid {_d{\mathcal T}}={\mathcal T}\right\}
$$
\item[(4)]
The {\it Frobenius-Perron complexity} of ${\mathcal T}$ is defined to 
be
$$\fpc({\mathcal T}):=\fpg (\Sigma)+1.$$
\end{enumerate}
\end{definition}

Note that it is not hard to show that
$cx({\mathcal T})=\inf \left\{ d \mid {{\mathcal T}_d}={\mathcal T}\right\}.$

\begin{theorem}
\label{xxthm8.3} Let ${\mathcal T}$ be a pre-triangulated category. 
Then $\fpc({\mathcal T})\leq cx({\mathcal T})$.
\end{theorem}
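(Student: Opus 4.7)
The plan is to chase through the definitions and reduce everything to a standard bound on spectral radii of non-negative matrices. Set $c := cx({\mathcal T})$ (assume $c < \infty$; otherwise there is nothing to prove). The goal is to show $\fpg(\Sigma) \leq c - 1$, which is exactly the desired inequality once we add $1$ to both sides.

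First I would fix an arbitrary real number $d > c$. By Definition \ref{xxdef8.2}(3), we then have ${_d{\mathcal T}} = {\mathcal T}$, so for every pair of objects $X,Y$ in ${\mathcal T}$,
\[
\lim_{n\to\infty} \frac{\dim \Hom_{\mathcal T}(X,\Sigma^n Y)}{n^{d-1}} = 0.
\]
Now pick any atomic subset $\phi = \{X_1,\dots,X_m\} \in \Phi_{m,a}$ and write $a_{ij}^{(n)} := \dim \Hom_{\mathcal T}(X_i, \Sigma^n X_j)$ for the entries of $A(\phi,\Sigma^n)$. Because $\phi$ is finite, the above limit statement applied to each of the $m^2$ pairs $(X_i,X_j)$ is uniform: given $\varepsilon>0$, there exists $N=N(\phi,\varepsilon)$ such that $a_{ij}^{(n)} \leq \varepsilon n^{d-1}$ for all $n\geq N$ and all $i,j$.

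Next I would invoke the standard estimate $\rho(A) \leq \max_i \sum_j a_{ij}$ for a square matrix $A$ with non-negative real entries (the max row-sum bound). Applied to $A(\phi,\Sigma^n)$ this yields
\[
\rho(A(\phi,\Sigma^n)) \;\leq\; m\,\varepsilon\, n^{d-1} \qquad \text{for all } n\geq N.
\]
Taking $\log_n$ of both sides and letting $n\to\infty$ gives
\[
\limsup_{n\to\infty} \log_n \bigl(\rho(A(\phi,\Sigma^n))\bigr) \;\leq\; d-1,
\]
since $\log_n(m\varepsilon) \to 0$. This inequality holds for every $\phi\in\Phi_a$, so by Definition \ref{xxdef2.3}(3), $\fpg(\Sigma) \leq d-1$. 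Finally, I would let $d \downarrow c$ to conclude $\fpg(\Sigma) \leq c-1$, and hence $\fpc({\mathcal T}) = \fpg(\Sigma)+1 \leq c = cx({\mathcal T})$.

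There is essentially no serious obstacle here; the whole proof is a matching of definitions plus the elementary matrix inequality $\rho(A)\leq \max_i\sum_j a_{ij}$. The only point that requires a moment's care is observing that, because $\phi$ is a \emph{finite} set, the pointwise decay of $\dim\Hom_{\mathcal T}(X_i,\Sigma^n X_j)/n^{d-1}$ is automatically uniform in $(i,j)$, which is what allows a single $\varepsilon$ to control all entries simultaneously. If $cx({\mathcal T}) = \infty$ the statement is vacuous, so no separate treatment is needed.
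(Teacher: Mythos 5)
Your proposal is correct and follows essentially the same route as the paper: fix $d>cx({\mathcal T})$, bound every entry of $A(\phi,\Sigma^n)$ by a constant times $n^{d-1}$ using the definition of $_d{\mathcal T}$, deduce $\rho(A(\phi,\Sigma^n))\leq mCn^{d-1}$, and conclude $\fpg(\Sigma)\leq d-1$. The only cosmetic difference is that the paper gets a single uniform constant by applying the limit condition to $X=\bigoplus_i X_i$ and using that each $X_i$ is a direct summand, whereas you take the maximum over the finitely many pairs $(X_i,X_j)$ directly; both are valid.
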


\begin{proof}
Let $d$ be any number strictly larger than $cx({\mathcal T})$.
We need to show that $\fpc({\mathcal T})\leq d$.

Let $\phi\in \Phi_{m,a}$ be an atomic set and let $X:=
\bigoplus_{X_i\in \phi} X_i$. Then, by definition,
$$\lim_{n\to \infty}
\frac{\dim \Hom_{\mathcal T}(X, \Sigma^n (X))}{n^{d-1}} =0.$$
Then there is a constant $C$ such that
$\dim \Hom_{\mathcal T}(X, \Sigma^n (X))< C n^{d-1}$ for all
$n>0$. Since each $X_i$ is a direct summand of $X$, we have 
$$a_{ij}(n):=\dim \Hom_{\mathcal T}(X_i, \Sigma^n (X_j))< C n^{d-1}$$
for all $i,j$. This means that each entry $a_{ij}(n)$ 
in the adjacency matrix
of $A(\phi, \Sigma^n)$ is less than $Cn^{d-1}$. Therefore
$\rho(A(\phi, \Sigma^n))< m C n^{d-1}$. By Definition 
\ref{xxdef2.3}(3), $\fpg(\Sigma)\leq d-1$. Thus $\fpc({\mathcal T})
\leq d$ as desired.
\end{proof}

We will prove that the equality 
$\fpc({\mathcal T})=cx({\mathcal T})$ holds 
under some extra hypotheses.
Let $A$ be a finite dimensional algebra with a complete list of simple
left $A$-modules $\{S_1,\cdots, S_w\}$. We use $n$ for any integer and $i,j$
for integers between 1 and $w$. Define, for $i\leq j$,
$$p_{ij}(n):=\min\{ \dim \Ext^n_A(S_i,S_j), \dim \Ext^n_A(S_j,S_i)\}$$
and 
$$P_n:=\max\{p_{ij}(n) \mid i\leq j\}.$$
We say $A$ satisfies {\it averaging growth condition} (or {\it AGC} for short) 
if there are positive integers $C$ and $d$,
independent of the choices of $n$ and $(i,j)$,
such that
\begin{equation}
\label{E8.3.1}\tag{E8.3.1}
\dim \Ext^n_{A}(S_i,S_j)\leq C \max\{P_{n-d}, P_{n-d+1},\cdots, P_{n+d}\}
\end{equation}
for all $n$ and all $1\leq i,j\leq w$.

\begin{theorem}
\label{xxthm8.4} 
Let $A$ be a finite dimensional algebra and ${\mathcal A}=D^b(\Mod_{f.d.}-A)$.
\begin{enumerate}
\item[(1)]
$cx(A)=cx({\mathcal A})$. As a consequence, $cx(A)$ is a derived invariant.
\item[(2)]
If $A$ satisfies AGC, then $\fpc({\mathcal A})= cx({\mathcal A})=cx(A)$.
As a consequence, if $A$ is local or commutative, then 
$\fpc({\mathcal A})= cx({\mathcal A})=cx(A)$.
\end{enumerate}
\end{theorem}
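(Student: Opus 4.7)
The plan is to reduce both sides to the growth rate of $\dim\Ext^n_A(S_i,S_j)$ between simple modules, exploiting the identification $\Hom_{\mathcal A}(M,\Sigma^n N)=\Ext^n_A(M,N)$ for finite dimensional $A$-modules $M,N$. Let $S_1,\ldots,S_w$ be a complete list of simple left $A$-modules, so $T=S_1\oplus\cdots\oplus S_w$; since $\Bbbk$ is algebraically closed and $\Ext^{<0}_A$ vanishes, the set $\phi:=\{S_1,\ldots,S_w\}$ is both a brick subset of ${\mathfrak A}$ and (viewed in ${\mathcal A}$) an atomic subset.

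For part (1), the inequality $cx(A)\leq cx({\mathcal A})$ is immediate from Definition \ref{xxdef8.2}(1) applied with $X=Y=T$. For the other direction, fix $d>cx(A)$ and $X,Y\in{\mathcal A}$. First, by iterating the long exact $\Ext$-sequences along composition series,
\[
\dim\Ext^n_A(M,N)\leq\ell(M)\ell(N)\dim\Ext^n_A(T,T)
\]
for all $M,N\in{\mathfrak A}$. Second, by induction on cohomological amplitude using canonical truncation triangles $\tau_{\leq i-1}X\to\tau_{\leq i}X\to H^i(X)[-i]\to$ (and analogously for $Y$), the quantity $\dim\Ext^n_{\mathcal A}(X,Y)$ is bounded by a constant times $\max_{|s|\leq r,\,i,j}\dim\Ext^{n+s}_A(H^i(X),H^j(Y))$, with $r$, $i$, $j$ depending only on the amplitudes of $X,Y$ (not on $n$). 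Combining the two bounds and observing that $\dim\Ext^{n+s}_A(T,T)=o(n^{d-1})$ for bounded $s$ (since $\log(n+s)/\log n\to 1$), we conclude $\dim\Ext^n_{\mathcal A}(X,Y)=o(n^{d-1})$; hence $X\in{_d{\mathcal A}}$ and $cx({\mathcal A})\leq d$.

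For part (2), Theorem \ref{xxthm8.3} together with part (1) gives $\fpc({\mathcal A})\leq cx(A)$, so it remains to prove $\fpg(\Sigma)\geq cx(A)-1$. The adjacency matrix $A(\phi,\Sigma^n)$ has $(i,j)$-entry $a_{ij}(n):=\dim\Ext^n_A(S_i,S_j)$. For each unordered pair $\{i,j\}$ with $i\leq j$, the $2\times 2$ principal submatrix indexed by $\{i,j\}$ is nonnegative with Perron root at least $\sqrt{a_{ij}(n)a_{ji}(n)}\geq p_{ij}(n)$ (and simply $a_{ii}(n)=p_{ii}(n)$ when $i=j$); by Lemma \ref{xxlem1.7}(1), $\rho(A(\phi,\Sigma^n))\geq P_n$. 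Under AGC,
\[
\dim\Ext^n_A(T,T)=\sum_{i,j}a_{ij}(n)\leq w^2C\max_{|s|\leq d}P_{n+s},
\]
so taking $\log_n$ and $\limsup_n$ (absorbing the bounded shift via $\log(n+s)/\log n\to 1$) yields $\limsup_n\log_n P_n\geq cx(A)-1$. Therefore $\fpg(\Sigma)\geq\limsup_n\log_n\rho(A(\phi,\Sigma^n))\geq cx(A)-1$.

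The consequences follow by verifying AGC in each case: if $A$ is local then $w=1$ and AGC is trivial; if $A$ is commutative finite dimensional then $A$ decomposes as a product of local rings, so $\Ext^n_A(S_i,S_j)=0$ for $i\neq j$ and AGC again holds with $d=0$, $C=1$. The main obstacle is the bookkeeping in part (1): the composition-series bound is routine, but executing the truncation induction for bounded complexes while keeping the index-shifts $s$ bounded independently of $n$ is what guarantees the polynomial growth rate of $\Ext_{\mathcal A}$ matches that of $\Ext_A(T,T)$.
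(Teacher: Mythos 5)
Your proposal is correct and follows essentially the same route as the paper's proof: both directions of (1) reduce to the growth rate of $\dim\Ext^n_A(T,T)$, and (2) is the same sandwich — $\fpc(\mathcal{A})\leq cx(\mathcal{A})=cx(A)$ from Theorem \ref{xxthm8.3} and part (1), versus $\rho(A(\phi,\Sigma^n))\geq P_n$ for the atomic set of simples combined with the AGC upper bound on $\dim\Ext^n_A(T,T)$. The only divergences are cosmetic: for the upper bound in (1) the paper invokes the generation of $\mathcal{A}$ by $T$ together with the thickness of the subcategories $(\{T\})_d$ and ${_d\mathcal{A}}$ (Lemma \ref{xxlem8.5}), which you unfold into an explicit d\'evissage via composition series and truncation triangles; for $\rho\geq P_n$ you use $2\times 2$ principal submatrices and Lemma \ref{xxlem1.7}(1) where the paper uses the entrywise comparison of Lemma \ref{xxlem1.7}(2); and $T=A/J(A)\cong\bigoplus_i S_i^{\oplus \dim S_i}$ rather than $\bigoplus_i S_i$, a slip that is harmless since it only changes bounded multiplicative constants.
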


We will prove Theorem \ref{xxthm8.4} after the next lemma.

Let ${\mathcal T}$ be a pre-triangulated category with suspension
$\Sigma$. We use  $X, Y, Z$ for objects in ${\mathcal T}$.
Fix a family $\phi$ of objects in ${\mathcal T}$ and a 
positive number $d$. Define

\begin{align}
\label{E8.4.1}\tag{E8.4.1}
{_d(\phi)}&=
\left\{X\in {\mathcal T}\mid \lim_{n\to \infty} \frac{ \dim 
\Hom_{\mathcal T}(X, \Sigma^n(Y))}{n^{d-1}}=0, \forall \; Y\in \phi\right\},\\
\label{E8.4.2}\tag{E8.4.2}
(\phi)_d&=
\left\{ X\in {\mathcal T}\mid \lim_{n\to \infty} \frac{ \dim 
\Hom_{\mathcal T}(Y, \Sigma^n(X))}{n^{d-1}}=0, \forall \; Y\in \phi\right\},\\
\label{E8.4.3}\tag{E8.4.3}
{^d(\phi)}&=
\left\{ X\in {\mathcal T}\mid \lim_{n\to \infty} \frac{ \sum_{i\leq n}\dim 
\Hom_{\mathcal T}(X, \Sigma^i(Y))}{n^d}=0, \forall \; Y\in \phi\right\},\\
\label{E8.4.4}\tag{E8.4.4}
(\phi)^d&=
\left\{ X\in {\mathcal T}\mid \lim_{n\to \infty} \frac{ \sum_{i\leq n}\dim 
\Hom_{\mathcal T}(Y, \Sigma^i(X))}{n^d}=0, \forall \; Y\in \phi\right\}.
\end{align}

\begin{lemma}
\label{xxlem8.5} 
The following are
full thick pre-triangulated subcategories of ${\mathcal T}$
closed under direct summands.
\begin{enumerate}
\item[(1)]
${_d(\phi)}$.
\item[(2)]
$(\phi)_d$.
\item[(3)]
${^d(\phi)}$.
\item[(4)]
$(\phi)^d$.
\end{enumerate}
\end{lemma}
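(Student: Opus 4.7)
The plan is to verify, for each of the four subsets, the three closure properties (shift, cone/extension in a distinguished triangle, and direct summands) directly from the defining limit conditions \eqref{E8.4.1}--\eqref{E8.4.4}. All four cases are essentially parallel, so I would carry out the argument in detail for ${_d(\phi)}$ and then indicate the trivial modifications (swap the variance for $(\phi)_d$; replace the single term $\dim\Hom(-,\Sigma^n(-))$ by the partial sum $\sum_{i\le n}\dim\Hom(-,\Sigma^i(-))$, and adjust the denominator from $n^{d-1}$ to $n^{d}$, for ${^d(\phi)}$ and $(\phi)^d$).

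First I would handle closure under $\Sigma^{\pm 1}$. If $X\in {_d(\phi)}$ and $Y\in\phi$, then $\dim\Hom_{\mathcal T}(\Sigma X,\Sigma^n Y)=\dim\Hom_{\mathcal T}(X,\Sigma^{n-1}Y)$, and replacing $n$ by $n-1$ in the denominator only changes $n^{d-1}$ by a factor tending to $1$, so the limit still vanishes. The same re-indexing argument works for $\Sigma^{-1}$ and for the partial-sum variants ${^d(\phi)}$ and $(\phi)^d$ (where one uses that $\sum_{i\le n}f(i+1)$ and $\sum_{i\le n}f(i)$ differ by boundary terms that are negligible against $n^d$).

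Next, closure under extensions. Given a distinguished triangle $X\to Y\to Z\to \Sigma X$ with $X,Z\in{_d(\phi)}$, applying $\Hom_{\mathcal T}(-,\Sigma^n Y')$ for any $Y'\in\phi$ yields a long exact sequence, whence the sub-additivity
\[
\dim\Hom_{\mathcal T}(Y,\Sigma^n Y')\ \le\ \dim\Hom_{\mathcal T}(X,\Sigma^n Y')+\dim\Hom_{\mathcal T}(Z,\Sigma^n Y').
\]
Dividing by $n^{d-1}$ and taking the limit forces the left-hand side into the defining vanishing condition, so $Y\in{_d(\phi)}$. The same inequality (summed over $i\le n$, or with the variance reversed) gives the extension closure in the other three cases. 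This also gives closure under isomorphism and shows each class is a full pre-triangulated subcategory.

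Finally, closure under direct summands is immediate: if $X\oplus X'\in{_d(\phi)}$, then $\Hom_{\mathcal T}(X,\Sigma^n Y)$ is a direct summand of $\Hom_{\mathcal T}(X\oplus X',\Sigma^n Y)$, so its dimension is bounded above by the latter, and the vanishing of the limit is inherited. Thickness follows from extension closure together with summand closure. There is no genuine obstacle here; the one point deserving a line of care is that the denominators $n^{d-1}$ (resp.\ $n^d$) do not see the finite shifts introduced by $\Sigma^{\pm 1}$, which I would record as a short lemma before invoking it in each of the four verifications.
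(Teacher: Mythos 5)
Your proposal is correct and follows essentially the same route as the paper's proof: closure under $\Sigma^{\pm 1}$ via the observation that $n^{d-1}/(n\pm 1)^{d-1}\to 1$, closure under cones via the long exact sequence and the resulting subadditivity of dimensions, and closure under direct summands from the dimension bound, with the remaining three cases handled by the same re-indexing remarks. No gaps.
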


\begin{proof} We only prove (1). The proofs of other parts are similar.
Suppose $X\in {_d(\phi)}$. Using the fact $\lim_{n\to\infty} \frac{n^{d-1}}{(n+1)^{d-1}}=1$,
we see that $X[1]=\Sigma(X)$ is in ${_d(\phi)}$. Similarly,
$X[-1]$ is in ${_d(\phi)}$. If $f: X_1\to X_2$ be a morphism of objects in
${_d(\phi)}$, and let
$X_3$ be the mapping cone of $f$, then, for each $Y\in \phi$,  we have an exact sequence
$$\to \Hom_{\mathcal T}(X_1, \Sigma^{n-1}(Y))\to \Hom_{\mathcal T}(X_3, \Sigma^{n}(Y))\to 
\Hom_{\mathcal T}(X_2, \Sigma^{n}(Y))\to$$
which implies that $X_3\in {_d(\phi)}$. Therefore ${_d(\phi)}$ is
a thick pre-triangulated subcategory of ${\mathcal T}$. If $X\in {_d(\phi)}$
and $X=Y\oplus Z$, it is clear that $Y,Z\in {_d(\phi)}$. 
Therefore ${_d(\phi)}$ is closed under 
taking direct summands.
\end{proof}

\begin{proof}[Proof of Theorem \ref{xxthm8.4}]
(1) Let $c=cx(A)$. For every $d<c$, we have that
$$\limsup_{n\to \infty} \frac{\dim \Ext^n_{A}(T,T)}{n^{d-1}}=\infty$$
which implies that $T\not\in {_d {\mathcal A}}$. Therefore 
$d\leq cx({\mathcal A})$. 

Conversely, let $d>c$. It follows from the definition that 
$$\limsup_{n\to \infty} \frac{\dim \Ext^n_{A}(T,T)}{n^{d-1}}=0.$$
This means that $T\in (\{T\})_d$. Since $T$ generates ${\mathcal A}$,
we have ${\mathcal A}=(\{T\})_d$. Again, since $T$ generates ${\mathcal A}$, 
we have  ${\mathcal A}={\mathcal A}_d={_d{\mathcal A}}$. By definition,
$d\geq cx({\mathcal A})$ as desired.

(2) Assume that $A$ satisfies AGC. Let
$$\begin{aligned}
c_1&=\fpc({\mathcal A}),\\
c_2&=\limsup_{n\to \infty}
\log_{n}(C \max\{P_{n-d}, P_{n-d+1},\cdots, P_{n+d}\}) +1,\\
c_3&=\limsup_{n\to \infty}
\log_{n}(P_{n}) +1,\\
c_4&=cx(A)=cx({\mathcal A}).
\end{aligned}
$$

By calculus, we have $c_2=c_3$. 
Let $\phi$ be the atomic set of simple objects $\{S_i\}_{i=1}^{w}$.
Then $\rho(\phi, \Sigma^n)\geq p_{ij}(n)$, for all $i,j$, by 
Lemma \ref{xxlem1.7}(2). So $\rho(\phi, \Sigma^n)
\geq P_n$. As a consequence, $c_1\geq c_3$. 
Let $T=A/J=\bigoplus_{i=1}^w S_i^{d_i}$ for some 
finite numbers $\{d_i\}_{i=1}^w$. Let $D$ be $\max_i\{d_i\}$.
By AGC, namely, \eqref{E8.3.1},
$$\begin{aligned}
\dim \Ext^n_A(T, T)&=\sum_{i,j} d_i d_j \dim \Ext^n_A(S_i,S_j)\\
&\leq w^2 D C \max\{P_{n-d}, P_{n-d+1},\cdots, P_{n+d}\}
\end{aligned}
$$
which implies that $c_4=cx(A)=cx(T)\leq c_2$. Combining
with Theorem \ref{xxthm8.3}, we have $c_1=c_2=c_3=c_4$
as desired.

If $A$ is local, then there is only one simple module
$S_1$. Then \eqref{E8.3.1} is automatic. If $A$ is 
commutative, then $\Ext^i_A(S_i,S_j)=0$ for all $n$
and all $i\neq j$. Again, in this case, \eqref{E8.3.1}
is obvious. The consequence follows from the main 
assertion.
\end{proof}

For all well-studied finite dimensional algebras $A$, 
\eqref{E8.3.1} holds. For example, the algebra $A$ in 
Example \ref{xxex5.5} satisfies AGC. This can be shown by using 
the computation given in Remark \ref{xxrem5.13}(3).
It is natural to ask if every finite dimensional 
algebra  satisfies AGC.

\begin{lemma}
\label{xxlem8.6}
\begin{enumerate}
\item[(1)]
Let ${\mathfrak A}$ be an abelian category and ${\mathcal A}=D^b({\mathfrak A})$.
If $\gldim {\mathfrak A}<\infty$, then $\fpc({\mathcal A})=0$.
\item[(2)]
Let ${\mathcal T}$ be a pre-triangulated category.
If $\fpgldim {\mathcal T}<\infty$, then $\fpc({\mathcal T})=0$.
\end{enumerate}
\end{lemma}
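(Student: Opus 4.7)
The plan is to reduce both parts to the following key observation: if for every atomic set $\phi$ one has $\rho(A(\phi,\Sigma^n))=0$ for all $n$ sufficiently large (with the bound possibly depending on $\phi$), then by the convention $\log_n 0=-\infty$ we get $\limsup_{n\to\infty}\log_n\rho(A(\phi,\Sigma^n))=-\infty$ for every such $\phi$. Taking the supremum over $\phi$, this forces $\fpg(\Sigma)=-\infty$, and under the standard convention that complexity is nonnegative (matching the classical reading $cx=0$ for modules of finite projective dimension and implicit in the formula $\fpc=\fpg+1$ appearing in the lemma), we conclude $\fpc(\mathcal{T})=0$.

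For part (2) this is immediate: the hypothesis $N:=\fpgldim\mathcal{T}<\infty$ means, by Definition \ref{xxdef2.7}(2), that $\fpdim(\Sigma^n)=0$ for all $n>N$, i.e.\ $\rho(A(\phi,\Sigma^n))=0$ for every atomic $\phi$ and every $n>N$ (a uniform-in-$\phi$ bound, in fact). For part (1) I would fix an atomic set $\phi=\{X_1,\ldots,X_m\}$ in $\mathcal{A}=D^b(\mathfrak{A})$. Since $\phi$ is finite and each $X_i$ is a bounded complex, I can choose integers $a\le b$ so that every $X_i$ is supported in degrees $[a,b]$. Then, using the standard hypercohomology spectral sequence or equivalently induction on the total amplitude via brutal truncation triangles on the $X_i$, one shows that $\Hom_{\mathcal{A}}(X_i,X_j[n])$ is built out of groups of the form $\Ext^p_{\mathfrak{A}}(X_i^s,X_j^t)$ with $p+t-s=n$. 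Since $g:=\gldim\mathfrak{A}<\infty$ forces $p\le g$ in every nonvanishing term and $s,t\in[a,b]$, every such Ext group vanishes as soon as $n>g+b-a$; hence $A(\phi,\Sigma^n)=0$ for such $n$, and the key observation yields $\fpc(\mathcal{A})=0$.

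The main obstacle is the hypercohomology input needed in part (1), namely the standard fact that $\Hom_{D^b(\mathfrak{A})}(X,Y[n])$ vanishes for $|n|\gg 0$ whenever $X,Y$ are bounded complexes in $D^b(\mathfrak{A})$ and $\gldim\mathfrak{A}<\infty$. When $\mathfrak{A}$ admits enough projectives or injectives this follows at once from the existence of bounded projective (resp.\ injective) resolutions of length $\le g$. In full generality it requires an induction on amplitude via brutal truncation triangles, reducing to the case where each $X_i$ is a single object of $\mathfrak{A}$ placed in a fixed degree, where the bound $\Ext^{>g}_{\mathfrak{A}}=0$ is the definition of $\gldim$. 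A secondary and easier point is simply making explicit the convention $\fpc\ge 0$ used to read off $\fpg=-\infty$ as $\fpc=0$.
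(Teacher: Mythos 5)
Your proof is correct and is surely the intended argument: the paper gives no proof of this lemma (``Both are easy and proofs are omitted''), and your reduction to the eventual vanishing of $A(\phi,\Sigma^n)$ --- immediate from the definition of $\fpgldim$ in part (2), and from the bound $\Hom_{\mathcal A}(X_i,\Sigma^n X_j)=0$ for $n>\gldim{\mathfrak A}+(b-a)$ obtained by induction on amplitude via truncation triangles in part (1) --- is the standard route. The only wrinkle is the one you already flag: taken literally, each $\limsup$ is $-\infty$, so $\fpg(\Sigma)=-\infty$ and $\fpc=\fpg+1$ would be $-\infty$ rather than $0$; this is a convention inconsistency in the paper itself (Section 0.2 asserts $\fpg({\mathcal T})=0$ when $\fpgldim({\mathcal T})<\infty$, which would instead give $\fpc=1$), not a gap in your argument, and your resolution by analogy with the classical convention $cx(M)=0$ for modules of finite projective dimension is the sensible one.
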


\begin{proof} Both are easy and proofs are omitted. 
\end{proof}

We conclude with examples of non-integral $\fpg$ of a 
triangulated category.

\begin{example}
\label{xxex8.7}
(1) Let $\alpha$ be any real number in $\{0\}\cup 
\{1\}\cup [2,\infty)$. By \cite[Theorem 1.8, or p. 14]{KL},
there is a finitely generated algebra $R$ with $\GKdim R=\alpha$. More precisely,
\cite[Theorem 1.8]{KL} implies that 
there is a 2-dimensional vector space $V\subset R$ that generates $R$ such that,
there are positive integers $a<b$, for every $n>0$,
\begin{equation}
\notag
a n^{\alpha} < \dim (\Bbbk 1+ V)^n < b n^{\alpha}.
\end{equation}
Define a filtration ${\mathcal F}$ on $R$ by
$$F_i R=(\Bbbk 1+V)^i, \quad \forall \; i.$$
Let $A$ be the associated graded algebra $\gr R$
with respect to this grading. Then $A$ is connected graded and 
generated by two elements in degree 1 and satisfying, for every $n>0$,
\begin{equation}
\label{E8.7.1}\tag{E8.7.1}
a n^{\alpha} < \sum_{i=0}^n \dim A_i < b n^{\alpha}.
\end{equation}
To match up with the definition of complexity, we further assume that
there are $c<d$ such that, for every $n>0$,
\begin{equation}
\label{E8.7.2}\tag{E8.7.2}
c n^{\alpha-1} < \dim A_n < d n^{\alpha-1}.
\end{equation}
This can be achieved, for example, by replacing $A$ by its polynomial 
extension $A[t]$ (with $\deg t=1$) and replacing $\alpha$ by $\alpha+1$.

Next we make $A$ a differential graded (dg) algebra by setting elements
in $A_i$ to have cohomological degree $i$ and $d_{A}=0$. For this 
dg algebra, we denote the derived category of left dg $A$-modules
by ${\mathcal A}$. Let ${\mathcal O}$ be the object $_AA$ in 
${\mathcal A}$. By the definition of the cohomological degree of $A$, we have
\begin{equation}
\label{E8.7.3}\tag{E8.7.3}\Hom_{\mathcal A}({\mathcal O}, \Sigma^{i} {\mathcal O})=A_i, 
\quad \forall \; i.
\end{equation}

Let ${\mathcal T}$ be the full triangulated subcategory of ${\mathcal A}$
generated by ${\mathcal O}$. \eqref{E8.7.3} implies that ${\mathcal O}$ 
is an atomic object. Now using \eqref{E8.7.3} together with \eqref{E8.7.2}, 
we obtain that 
\begin{equation}
\label{E8.7.4}\tag{E8.7.4}
\fpc({\mathcal T}) \geq \alpha.
\end{equation}

By \eqref{E8.7.2}-\eqref{E8.7.3}, we have
that, for every $d>\alpha$,
${\mathcal O}\in {_{d} (\{\mathcal O\})}$. Since ${\mathcal O}$
generates ${\mathcal T}$, we have ${_{d} (\{\mathcal O\})}={\mathcal T}$.
The last equation means that ${\mathcal O}\in ({\mathcal T})_{d}$.
Since ${\mathcal O}$
generates ${\mathcal T}$, we have $({\mathcal T})_{d}={\mathcal T}$.
By definition, $d>cx({\mathcal T})$. Combining these with Theorem
\ref{xxthm8.3} and \eqref{E8.7.4}, we have, for every $d>\alpha$,
$$\alpha \leq \fpc({\mathcal T})\leq cx({\mathcal T})<d$$
which implies that
$\fpc({\mathcal T})=cx({\mathcal T})=\alpha$. This construction 
implies that 
\begin{equation}
\label{E8.7.5}\tag{E8.7.5}
\GKdim \left( \bigoplus_{i=0}^{\infty} 
\Hom_{\mathcal T}({\mathcal O}, \Sigma^i({\mathcal O}))\right)
=\GKdim A=\alpha.
\end{equation}

(2) We now consider an extreme case. Let $a:=\{a_i\}_{i=0}^{\infty}$
be any sequence of non-negative integers with $a_0=1$. 
Define $B$ to be the dg algebra $\bigoplus B_i$ 
such that
\begin{enumerate}
\item[(i)]
$\dim B_i=a_i$ for all $i$. In particular, $B_0=\Bbbk$.
Elements in $B_i$ have cohomological degree $i$.
\item[(ii)]
$(\bigoplus_{i>0} B_i)^2=0$.
\item[(iii)]
Differential $d_B=0$.
\end{enumerate}
In this case, $\GKdim B=0$.
Similar to part (1), the derived category of left dg $B$-modules
is denoted by ${\mathcal B}$. Let ${\mathcal O}$ be the object $_BB$ in 
${\mathcal B}$. Then 
$$\Hom_{\mathcal B}
({\mathcal O}, \Sigma^{i} {\mathcal O})=B_i, \quad \forall \; i,$$
and ${\mathcal O}$ is an atomic object. 
Let ${\mathcal T}$ be the full triangulated subcategory of ${\mathcal B}$
generated by ${\mathcal O}$. The argument in part (1) shows that
$$\fpc({\mathcal T})=\limsup_{n\to\infty} \log_{n}( a_n) +1.$$
Now let $r$ be any real number $\geq 1$ and 
let $a_i=\begin{cases} 1 &i=0\\
\lfloor i^{r-1} \rfloor & i\geq 1\end{cases}$. Then we have $\fpc({\mathcal T})=r$.
Let $r$ be any real number $\geq 1$ and $a_i=\lfloor r^i \rfloor$ for all $i\geq 0$. 
Then $\fpc({\mathcal T})=\begin{cases} 1& r=1\\ \infty & r>1\end{cases}$. 
Using a similar method (with details omitted),
$\fpv({\mathcal T})=r$.
\end{example}

\bigskip

\subsection*{Acknowledgments}
The authors would like to thank Klaus Bongartz, Christof Geiss, 
Ken Goodearl, Claus Michael Ringel, and Birge Huisgen-Zimmermann 
for many useful conversations on the subject, and thank Max 
Lieblich for the proof of Proposition \ref{xxpro6.5}(3). J. Chen 
was partially supported by the National Natural Science Foundation 
of China (Grant No. 11571286) and the Natural Science Foundation 
of Fujian Province of China (Grant No. 2016J01031). Z. Gao was 
partially supported by the National Natural Science Foundation of 
China (Grant No. 61401381). E. Wicks and J.J. Zhang were partially 
supported by the US National Science Foundation (Grant Nos. 
DMS-1402863 and DMS-1700825). X.-H. Zhang was partially supported 
by the National Natural Science Foundation of China (Grant No. 11401328).
H. Zhu was partially supported by a grant from Jiangsu overseas 
Research and Training Program for university prominent young and 
middle-aged Teachers and Presidents, China.



\begin{thebibliography}{99}


\bibitem[AE]{AE} 
J. Alperin and L. Evens, 
\emph{Representations, resolutions, and Quillen's 
dimension theorem}, 
J. Pure Appl. Algebra {\bf 22} (1981), 1-9.

\bibitem[Ar]{Ar}
S. Ariki, 
\emph{Hecke algebras of classical type and their 
representation type}, 
Proc. London Math. Soc. (3) {\bf 91} (2005), no. 2, 355--413.


\bibitem[AZ]{AZ}
M. Artin, J.J. Zhang,
\emph{Noncommutative projective schemes},
Adv. Math. {\bf 109} (1994), 228--287.

\bibitem[AS]{AS}
I. Assem, D. Simson and A. Skowroński, 
\emph{Elements of the representation theory of associative algebras}, 
Vol. 1. Techniques of representation theory. London Mathematical 
Society Student Texts, 65. Cambridge University Press, Cambridge, 2006. 

\bibitem[ARS]{ARS}
M. Auslander, I. Reiten and S.O.  Smal{\/o}, 
\emph{Representation theory of Artin algebras}, 
Corrected reprint of the 1995 original. 
Cambridge Studies in Advanced Mathematics, 
{\bf 36} Cambridge University Press, Cambridge, 1997.

\bibitem[Av]{Av}
L.L. Avramov, 
\emph{Infinite free resolutions}, 
in: Six Lectures on Commutative Algebra, in: Prog. Math., 
vol. {\bf 166}, Birkh{\"a}user Verlag, Basel, Boston, Berlin, 1998, 
1-118.


\bibitem[BHZ]{BHZ}
Y.-H. Bao, J.-W. He and J.J. Zhang,
\emph{Pertinency of Hopf actions and quotient 
categories of Cohen-Macaulay algebras},
J. Noncommut. Geom. (accepted for publication).



\bibitem[BZ]{BZ}
J. Bell and J.J. Zhang, 
\emph{An isomorphism lemma for graded rings}, 
Proc. Amer. Math. Soc., {\bf 145} (2017), no. 3, 989--994




\bibitem[Bei]{Bei}
A. A. Be{\v i}linson, 
\emph{Coherent sheaves on ${\mathbb P}^n$
and problems in linear algebra},
Funktsional. Anal. i Prilozhen. {\bf 12}(3) (1978), 68--69.


\bibitem[Ben]{Ben}
D.J. Benson, 
\emph{Representations and cohomology I}. 
Basic representation theory of finite groups and associative algebras. 
Cambridge Studies in Advanced Mathematics, 30. Cambridge University 
Press, Cambridge, 1991. 


\bibitem[BS]{BS}
P.A. Bergh and {\O}. Solberg,
\emph{Relative support varieties}, 
Q. J. Math. {\bf 61} (2010), no. 2, 171--182.


\bibitem[BGP]{BGP}
I.N. Bernstein, I.M. Gel'fand and V.A. Ponomarev, 
\emph{Coxeter functors and Gabriel's theorem}, 
Russ. Math. Surv. {\bf 28} (1973), no. 2, 17--32 (English).


\bibitem[BO1]{BO1}
A. Bondal and D. Orlov, 
\emph{Reconstruction of a variety from the derived category and 
groups of autoequivalences},
Compositio Math. {\bf 125} (2001), no. 3, 327--344.

\bibitem[BO2]{BO2}
A. Bondal and D. Orlov, 
\emph{Derived categories of coherent sheaves}, 
In Proceedings of the International Congress of Mathematicians, Vol. II 
(Beijing, 2002), pages 47--56. Higher Ed. Press, Beijing, 2002. 


\bibitem[B1]{B1}
K. Bongartz, 
priviate communications.

\bibitem[B2]{B2}
K. Bongartz, 
Representation embeddings and the second
Brauer-Thrall conjecture, preprint, (2017),
arXiv:1611.02017.


\bibitem[Bre]{Bre}
S. Brenner,
\emph{A combinatorial characterization of finite Auslander-Reiten quivers}, 
Proceedings ICRA 4, Ottawa 1984, Lecture Notes in Mathematics {\bf 1177} 
(Springer, Berlin, 1986), pp. 13--49.

\bibitem[Bri]{Bri}
T. Bridgeland, 
\emph{Flops and derived categories}, 
Invent. Math., {\bf 147}(3), (2002), 613--632.

\bibitem[BKR]{BKR}
T. Bridgeland, A. King, and M. Reid, 
\emph{The MacKay correspondence as an equivalence of derived
categories}, 
J. Amer. Math. Soc., {\bf 14}(3) (2001), 535--554.

\bibitem[BB]{BB}
K. Br{\"u}ning and I. Burban, 
\emph{Coherent sheaves on an elliptic curve}, (English summary) 
Interactions between homotopy theory and algebra, 297--315, 
Contemp. Math., 436, Amer. Math. Soc., Providence, RI, 2007. 


\bibitem[Ca]{Ca}
J.F. Carlson,
\emph{The decomposition of the
trivial module in the complexity quotient category},
J. Pure Appl. Algebra {\bf 106} (1996), 23--44.

\bibitem[CDW]{CDW}
J.F. Carlson, P. W. Donovan and W. W. Wheeler,
\emph{Complexity and quotient categories for group algebras},
J. Pure Appl. Algebra {\bf 93} (1994), 147--167.

\bibitem[CC]{CC}
A.T. Carroll and C. Chindris, 
\emph{Moduli spaces of modules of Schur-tame algebras}, (English summary) 
Algebr. Represent. Theory {\bf 18} (2015), no. 4, 961--976. 

\bibitem[Ch]{Ch}
J.A. Chen, 
\emph{On genera of smooth curves in higher-dimensional varieties}, 
Proc. Amer. Math. Soc. {\bf 125} (1997), no. 8, 2221--2225.

\bibitem[CG]{CG}
J.M. Chen,  Z.B. Gao, E. Wicks, J. J. Zhang, X-.H. Zhang and H. Zhu, 
\emph{Frobenius-Perron theory for projective schemes}, ArXiv e-prints, (2019). ArXiv:1907.02221.


\bibitem[CKW]{CKW}
C. Chindris, R. Kinser and J. Weyman, 
\emph{Module varieties and representation type of finite-dimensional algebras}, 
Int. Math. Res. Not. IMRN (2015), no. {\bf 3}, 631--650.

\bibitem[CFZ]{CFZ}
C. Ciliberto, F. Flamini and M. Zaidenberg, 
\emph{Gaps for geometric genera}, 
Arch. Math. (Basel) {\bf 106} (2016), no. 6, 531--541.


\bibitem[Cr]{Cr}
W. Crawley-Boevey, 
\emph{Lectures on representations of quivers}, Lectures in Oxford in 1992, 
available at
www.amsta.leeds.ac.uk/~pmtwc/quivlecs.pdf.


\bibitem[DoG]{DoG}
M.A. Dokuchaev, N.M. Gubareni, V.M. Futorny,  M.A. Khibina and V.V. Kirichenko,
Dynkin diagrams and spectra of graphs,
S{\~a}o Paulo J. Math. Sci., {\bf 7} (2013), no. 1, 83--104. 

\bibitem[DoF]{DoF}
P. Donovan and M.R. Freislich, 
\emph{The representation theory of finite graphs and associated algebras}, 
Number 5 in Carleton Mathematical Lecture Notes. Carleton University, 
Ottawa, Ont., 1973.


\bibitem[Dr1]{Dr1}
J.A. Drozd, 
{\it Tame and wild matrix problems}, 
Representation theory, II (Proc. Second Internat. Conf., Carleton Univ., 
Ottawa, Ont., 1979), Lecture Notes in Math., vol. 832, Springer, Berlin, 
1980, pp. 242--258.

\bibitem[Dr2]{Dr2}
Y.A. Drozd, 
\emph{Derived tame and derived wild algebras}, 
Algebra Discrete Math. (2004), no. 1, 57--74. 

\bibitem[DrG]{DrG}
Y.A. Drozd and G.-M. Greuel,
\emph{Tame and wild projective curves and classification of vector bundles,} 
J. Algebra {\bf 246} (2001), no. 1, 1--54.

\bibitem[ES]{ES}
K. Erdmann and {\O}. Solberg, 
\emph{Radical cube zero selfinjective algebras of finite complexity}, 
J. Pure Appl. Algebra {\bf 215} (2011), no. 7, 1747--1768.



\bibitem[EG]{EG}
P. Etingof, S. Gelaki,  D. Nikshych and V. Ostrik, 
Tensor categories, 
Mathematical Surveys and Monographs, {\bf 205}. 
American Mathematical Society, Providence, RI, 2015.

\bibitem[EGO]{EGO}
P. Etingof, S.  Gelaki and V. Ostrik, 
\emph{Classification of fusion categories of dimension $pq$}, 
Int. Math. Res. Not., 2004, no. {\bf 57}, 3041--3056. 

\bibitem[ENO]{ENO}
P. Etingof, D. Nikshych and V. Ostrik,
\emph{On fusion categories},
Ann. of Math., (2) {\bf 162} (2005), no. 2, 581--642. 

\bibitem[Fa]{Fa}
R. Farnsteiner, 
\emph{Tameness and complexity of finite group schemes}, 
Bulletin of the London Mathematical Society,
{\bf 39} (2007) no. 1, 63--70.

\bibitem[FW]{FW}
J. Feldvoss and S. Witherspoon,
\emph{Support varieties and representation type of self-injective algebras}, 
Homology Homotopy Appl. {\bf 13} (2011), no. 2, 197--215. 

\bibitem[Ga1]{Ga1}
P. Gabriel, 
\emph{Unzerlegbare Darstellungen. I}, (German. English summary) 
Manuscripta Math. {\bf 6} (1972), 71--103; correction, ibid. 
{\bf 6} (1972), 309.

\bibitem[Ga2]{Ga2}
P. Gabriel, 
\emph{Repr{\'e}sentations ind{\'e}composables}, (French) 
S{\'e}minaire Bourbaki, 26e ann{\'e}e (1973/1974), 
Exp. No. 444, pp. 143--169. Lecture Notes in Math., Vol. 431, 
Springer, Berlin, 1975. 

\bibitem[Ga3]{Ga3}
P. Gabriel,  
\emph{Auslander-Reiten sequences and representation-finite algebras}, 
Representation theory, I (Proc. Workshop, Carleton Univ., Ottawa, Ont., 1979), pp. 1--71, 
Lecture Notes in Math., 831, Springer, Berlin, 1980. 

\bibitem[GL]{GL}
W. Geigle and H. Lenzing, 
\emph{A class of weighted projective curves arising in 
representation theory of finite-dimensional algebras}, 
Singularities, representation of algebras, and vector bundles 
(Lambrecht, 1985), 265--297, Lecture Notes in Math., 1273, 
Springer, Berlin, 1987.


\bibitem[GKO]{GKO}
C. Geiss, B. Keller and S. Oppermann, 
\emph{$n$–angulated categories}, J. Reine Angew. Math.
{\bf 675} (2013) 101--120. 

\bibitem[GKr]{GKr}
C. Geiss and H. Krause, 
\emph{On the notion of derived tameness}, 
(English summary) 
J. Algebra Appl. {\bf 1} (2002), no. 2, 13--157. 


\bibitem[GLW]{GLW}
J.-Y. Guo, A. Li, Q. Wu,
\emph{Selfinjective Koszul algebras of finite complexity},
Acta Math. Sinica, English Series {\bf 25} (2009), 2179--2198.

\bibitem[HPR]{HPR}
D. Happel, U. Preiser, and C.M. Ringel,
Binary polyhedral groups and Euclidean diagrams.
{\it Manuscripta Math.}, {\bf 31} (1-3), (1980), 317--329.


\bibitem[HL]{HL}
D. Huybrechts and M. Lehn, 
\emph{The geometry of moduli spaces of sheaves}, 
Second edition. Cambridge Mathematical Library. 
Cambridge University Press, Cambridge, 2010. 

\bibitem[Ke1]{Ke1}
B. Keller, 
\emph{Derived categories and tilting}, 
Handbook of tilting theory, 49--104, 
London Math. Soc. Lecture Note Ser., {\bf 332}, 
Cambridge Univ. Press, Cambridge, 2007. 

\bibitem[Ke2]{Ke2}
B. Keller, 
\emph{Calabi-Yau triangulated categories.} 
Trends in representation theory of algebras and related topics, 
467--489, EMS Ser. Congr. Rep., Eur. Math. Soc., Zürich, 2008.

\bibitem[KV]{KV}
B. Keller and D. Vossieck
\emph{Sous les cat{\'e}gories dériv{\'e}es} 
(French) [Beneath the derived categories] 
C. R. Acad. Sci. Paris S{\'e}r. I Math. 
{\bf 305} (1987), no. 6, 225--228. 


\bibitem[KL]{KL}
G.R. Krause and T.H. Lenagan,
\emph{Growth of algebras and Gelfand-Kirillov dimension},
Research Notes in Mathematics, Pitman Adv. Publ. Program,
{\bf 116} (1985).

\bibitem[Kul]{Kul}
J. K{\"u}lshammer, 
\emph{Representation type of Frobenius-Lusztig kernels}, 
Q. J. Math. {\bf 64} (2013), no. 2, 471--488. 
\emph{Corrigendum "Representation type of Frobenius-Lusztig kernels"}
Q. J. Math. {\bf 66} (2015), no. 4, 1139.

\bibitem[LM]{LM}
H. Lenzing, H. Meltzer, 
\emph{Sheaves on a weighted projective line of genus one, and representations
of a tubular algebra}, Representations of algebras (Ottawa, Canada, 1992), CMS Conf.
Proc. {\bf 14}, 313--337, (1994).


\bibitem[MR]{MR}
J.C. McConnell and J.C . Robson,
``Noncommutative Noetherian Rings,'' Wiley, Chichester, 1987.


\bibitem[Ni]{Ni}
D. Nikshych,
\emph{Semisimple weak Hopf algebras},
J. Algebra, {\bf 275} (2004), no. 2, 639--667. 

\bibitem[Na]{Na}
L.A. Nazarova, 
\emph{Representations of quivers of infinite type}, 
Izvestiya Akademii Nauk SSSR. Seriya Matematicheskaya, 
{\bf 37} (1973), 752--791.

\bibitem[RVdB]{RVdB}
I. Reiten and M. Van den Bergh, 
\emph{Noetherian hereditary abelian categories satisfying Serre duality}, 
J. Amer. Math. Soc. {\bf 15} (2002), no. 2, 295--366. 

\bibitem[Ri]{Ri}
J. Rickard,
\emph{The representation type of self-injective algebras}, 
Bull. London Math. Soc. {\bf 22} (1990), 540--546.


\bibitem[RV]{RV}
C.M. Ringel and D. Vossieck, 
\emph{Hammocks}, 
Proc. London Math. Soc. (3) {\bf 54} (1987), no. 2, 216--246. 


\bibitem[Sc1]{Sc1}
R. Schiffler, 
\emph{Quiver representations},
CMS Books in Mathematics/Ouvrages de Math{\'e}matiques de la SMC. 
Springer, Cham, 2014. 


\bibitem[Sc2]{Sc2}
O. Schiffmann,
\emph{Lectures on Hall algebras},
Geometric methods in representation theory. II, 1--141, 
S{\'e}min. Congr., 24-II, Soc. Math. France, Paris, 2012. 

\bibitem[Sm]{Sm}
J.H. Smith, 
Some properties of the spectrum of a graph, in: Combinatorial Structures
and their Applications (Eds. R. Guy, H. Hanani, N. Sauer, J. Sch{\"o}nheim),
Gordon and Breach (New York), 1970, 403--406.


\bibitem[VdB]{VdB}
M. Van den Bergh,
Three-dimensional flops and noncommutative rings,
\emph{Duke Math. J.} {\bf 122}(3), (2004), 423--455.

\bibitem[vR1]{vR1}
A.-C. van Roosmalen, 
\emph{Abelian 1-Calabi-Yau categories}, 
Int. Math. Res. Not. IMRN 2008, no. 6, Art. ID rnn003, 20pp.

\bibitem[vR2]{vR2}
A.-C. van Roosmalen, 
\emph{Numerically finite hereditary categories with Serre duality}, 
Trans. Amer. Math. Soc. {\bf 368} (2016), no. 10, 7189--7238. 

\bibitem[Ye]{Ye}
A. Yekutieli, 
\emph{Dualizing complexes, Morita equivalence and the derived Picard group of a ring}, 
J. London Math. Soc. (2) {\bf 60} (1999), no. 3, 723--746.

\end{thebibliography}

\end{document}